\newcommand{\markat}{0.55}
\newcommand{\markwithsym}{>}
\newcommand{\markwith}{{\arrow[black]{\markwithsym}}}
\tikzset{decoration={markings, mark=at position \markat with \markwith}}
\tikzstyle{square box}=[rectangle,fill=white,draw=black,minimum height=6mm,minimum width=6mm]
\tikzstyle{diredge}=[postaction=decorate]
\tikzstyle{tvertex}=[circle,inner sep=0.6pt,fill=black,draw=black]
\tikzstyle{outputedge}=[diredge,-)]
\tikzstyle{inputedge}=[diredge,(-]
\tikzstyle{mcgraph}=[baseline=(current bounding box.center), node distance=1.5em and 1.5em, every loop/.style={->, shorten >= 1pt, min distance=8mm}]
\tikzstyle{mcgraph2}=[mcgraph,baseline=(current bounding box.center), node distance=6mm and 8mm]
\tikzstyle{ivert}=[val, rectangle, fill=green!10!white, inner sep=3pt]
\tikzstyle{bndry}=[line width=0.4pt, draw=black, inner sep=0.5pt, circle, dotted, font=\small]
\tikzstyle{sitev}=[line width=0.4pt, draw=black, fill=white, inner sep=0.5pt, circle, dotted, font=\small]
\tikzstyle{ellipses}=[font=\small]
\tikzstyle{brace}=[font=\large]
\tikzstyle{elabel}=[inner sep=1pt, font=\small]
\tikzstyle{ipoint}=[fill=black, inner sep=1pt, circle,font=\small]
\tikzstyle{ipointlabel}=[node distance=0em and 0em,font=\small]
\tikzstyle{vpoint}=[rectangle,line width=0.4pt, draw=black, fill=green!10!white]
\tikzstyle{onegate}=[node distance=1.5mm and 4mm]
\tikzstyle{big}=[node distance=2mm and 6mm]
\tikzstyle{copyg}=[draw=black, fill=white, line width=2pt, inner sep=2pt, circle]
\tikzstyle{pregate}=[line width=1pt, inner sep=1mm, font=\small]
\tikzstyle{gate}=[pregate, draw=green!50!black, fill=green!10!white]
\tikzstyle{val}=[fill=white, inner sep=1mm, line width=0.2pt, draw=black, font=\small]
\tikzstyle{andg}=[gate, rectangle]
\tikzstyle{notg}=[gate, circle, inner sep=1pt]
\tikzstyle{valg}=[val, circle, inner sep=1pt]
\tikzstyle{subg}=[val, inner sep=3pt, rectangle]
\tikzstyle{esubg}=[subg, line width=1pt, dotted]
\tikzstyle{defg}=[val, line width=1pt, inner sep=3pt, draw=blue!50!black, double, rectangle]
\tikzstyle{ignoreg}=[copyg]
\tikzstyle{halfe}=[outer sep=1mm, inner sep=0mm, font=\small]
\tikzstyle{evar}=[font=\small]
\tikzstyle{line}=[-,rounded corners=1ex,line width=0.5pt,font=\small]
\tikzstyle{dline}=[->,>=stealth,rounded corners=1ex,line width=0.5pt,font=\small]
\tikzstyle{bbox}=[draw=red!50!black,line width=1pt,dotted,rectangle,inner sep=3pt]
\tikzstyle{greenbox}=[rectangle,fill=green!30,draw=green!50!black]
\tikzstyle{bluebox}=[rectangle,fill=blue!30,draw=blue]
\tikzstyle{greybg}=[background rectangle/.style={fill=black!5,draw=black!30,rounded corners=1ex}, show background rectangle]
\tikzstyle{precircuit}=[baseline=(current bounding box.center), node distance=2mm and 4mm]
\tikzstyle{circuit}=[greybg, precircuit, edge/.style={dline}]
\newcommand{\TODO}[1]{%
\typeout{WARNING!!! there is still a TODO left}
\marginpar{\textbf{!TODO: }\emph{#1}}
}
\renewcommand{\TODO}[1]{}
\newcommand{\catSet}{\ensuremath{\textbf{Set}\xspace}}
\newcommand{\catVect}{\ensuremath{\textbf{Vect}\xspace}}
\newcommand{\catTop}{\ensuremath{\textbf{Top}\xspace}}
\newcommand{\catOGraph}{\ensuremath{\textbf{OGraph}}\xspace}
\newcommand{\catGraph}{\ensuremath{\textbf{Graph}}\xspace}
\newcommand{\slicecat}[2]{#1 / #2}
\newcommand{\catGraphSlice}{\ensuremath{\slicecat{\catGraph}{2_\mathcal{G}}}}
\newcommand{\catTGSlice}{\ensuremath{\slicecat{\catGraph}{T_\mathcal{G}}}}
\newcommand{\catOGraphTG}{\ensuremath{\catOGraph_{\typegraph}}}
\newcommand{\catMonPreCat}{\ensuremath{\textbf{MonPreCat}}\xspace}
\newcommand{\rewriteCat}[1]{\ensuremath{\DCsp(\catOGraphTG)/\!\!/\,#1}}
\newcommand{\typegraph}{\ensuremath{T_{\mathcal G}}}
\newcommand{\mergew}[1]{+_{#1}}
\newcommand{\plugw}[1]{+_{\!#1}^{\!\!*}}
\newlength{\hookrightarrowwidth}
\DeclareMathOperator{\matches}{%
\makebox[\hookrightarrowwidth][c]{$\hookrightarrow$}
\hspace*{-\hookrightarrowwidth}%
\makebox[\hookrightarrowwidth][c]{\raise0.5mm\hbox{$\, ^{\sim}$}}%
}
\DeclareMathOperator{\leftmatches}{%
\makebox[\hookrightarrowwidth][c]{$\hookleftarrow$}
\hspace*{-\hookrightarrowwidth}%
\makebox[\hookrightarrowwidth][c]{\raise0.5mm\hbox{$\, ^{\sim}$}}%
}
\DeclareMathOperator{\notleftmatches}{%
\makebox[\hookrightarrowwidth][c]{$\hookleftarrow$}
\hspace*{-\hookrightarrowwidth}%
\makebox[\hookrightarrowwidth][c]{\raise0.5mm\hbox{$\, ^{\sim}$}}%
\hspace*{-\hookrightarrowwidth}%
\makebox[\hookrightarrowwidth][c]{\raise-0.2mm\hbox{\footnotesize{$/$}}}%
}
\newcommand{\seqcompww}[3]{#1\, ;_{#2} #3}
\newcommand{\exten}[2]{{#1}^{\uparrow #2}}
\newcommand{\injmap}{\hookrightarrow}
\newcommand{\In}{\textrm{In}}
\newcommand{\Out}{\textrm{Out}}
\newcommand{\DCsp}{\textbf{DCsp}}
\newcommand{\cmdrewritesto}{\tikz[baseline=-0.25em] { \draw [-open triangle 45, line width=0.2pt] (0,0) -- (0.5,0); }\,}
\newcommand{\cmdrewriteequiv}{\tikz[baseline=-0.25em] { \draw [open triangle 45-open triangle 45, line width=0.2pt] (0,0) -- node [auto,yshift=-1.2mm] {$*$} (0.7,0); }\,}
\newcommand{\cmdrewritetrans}{\tikz[baseline=-0.25em] { \draw [-open triangle 45, line width=0.2pt] (0,0) -- node [auto,pos=0.3,yshift=-1.2mm] {$*$} (0.5,0); }\,}
\newcommand{\lengthymultimapdot}[1]{%
\begin{tikzpicture}[baseline=(current bounding box.south), node distance=0.5em and 1em,text height=0em, text depth=0ex,inner sep=0em]
  \node[fill=white, line width=0.2pt, draw=black, inner sep=1.5pt, circle] (b) {};
  \path[-] ($(b) - (#1,0)$) edge (b);
\end{tikzpicture}%
}
\DeclareMathOperator{\multimapdot}{\lengthymultimapdot{1em}}
\DeclareMathOperator{\longmultimapdot}{\lengthymultimapdot{1.7em}}
\DeclareMathOperator{\rewritesto}{\cmdrewritesto}
\DeclareMathOperator{\rewriteequiv}{\cmdrewriteequiv}
\DeclareMathOperator{\rewritetrans}{\cmdrewritetrans}
\newcommand{\rwarrow}{\multimapdot}
\newcommand{\longrwarrow}{\longmultimapdot}
\newcommand{\rwrulew}[3]{#1 \multimapdot_{#2} #3}
\newcommand{\rwrule}[2]{#1 \multimapdot #2}
\newcommand{\rwsubstw}[3]{#1 \multimapdot_{#2} #3}
\newcommand{\rwsubst}[2]{#1 \multimapdot #2}
\newtheorem{theorem}{Theorem}[section]
\newtheorem{lemma}[theorem]{Lemma}
\newtheorem{conjecture}[theorem]{Conjecture}
\newtheorem{definition}[theorem]{Definition}
\newtheorem{definitions}[theorem]{Definitions}
\newtheorem{example}[theorem]{Example}
\newtheorem{remark}[theorem]{Remark}
\newcommand{\institute}[1]{\\#1}
\newcommand{\email}[1]{\\\tt{#1}}
\title{Open Graphs and Monoidal Theories}
 \author{Lucas Dixon
 \institute{University of Edinburgh}
 \email{ldixon@inf.ed.ac.uk}
 \and
 Aleks Kissinger
 \institute{University of Oxford}
 \email{alexander.kissinger@comlab.ox.ac.uk} }
\date{Draft: \today \thanks{ This research was funded by EPSRC grant
    EPE/005713/1 and by a Clarendon Studentship. We would also like to
    thank Ross Duncan and Matvey Soloviev for their their many helpful
    discussions and remarks on this work.}}
\begin{document}

\maketitle

\begin{abstract}
String diagrams are a powerful tool for reasoning about physical processes, logic circuits, tensor networks, and many other compositional structures. The distinguishing feature of these diagrams is that edges need not be connected to vertices at both ends, and these unconnected ends can be interpreted as the inputs and outputs of a diagram. In this paper, we give a concrete construction for string diagrams using a special kind of typed graph called an \emph{open-graph}. While the category of open-graphs is not itself adhesive, we introduce the notion of a \emph{selective adhesive functor}, and show that such a functor embeds the category of open-graphs into the ambient adhesive category of typed graphs. Using this functor, the category of open-graphs inherits ``enough adhesivity'' from the category of typed graphs to perform double-pushout (DPO) graph rewriting. A salient feature of our theory is that it ensures rewrite systems are ``type-safe'' in the sense that rewriting respects the inputs and outputs. This formalism lets us safely encode the interesting structure of a computational model, such as evaluation dynamics, with succinct, explicit rewrite rules, while the graphical representation absorbs many of the tedious details. Although topological formalisms exist for string diagrams, our construction is discreet, finitary, and enjoys decidable algorithms for composition and rewriting. We also show how open-graphs can be parametrised by graphical signatures, similar to the monoidal signatures of Joyal and Street, which define types for vertices in the diagrammatic language and constraints on how they can be connected. Using typed open-graphs, we can construct free symmetric monoidal categories, PROPs, and more general monoidal theories. Thus open-graphs give us a handle for mechanised reasoning in monoidal categories.
\end{abstract}

\section{Introduction}

Graphs are often used for specification and reasoning, both formally and informally.  They have both an appealing visual nature as well as the ability to naturally abstract structure. In this paper, we will focus on ``string diagrams'', the graphical structures that arise in monoidal theories. Well known examples include proof-nets in linear logic \cite{Girard:proof-nets:96}, Penrose's tensor notation~\cite{Penrose1971Applications-of}, Feynman diagrams, diagrammatic notations for logic circuits, and high level languages for quantum information processing~\cite{Coecke:2008jo}. A common feature of these graphical languages is that they can be understood as describing a computational process, and they support reasoning by manipulating the graphical presentation. However, such manipulation is both tedious and error prone to do by hand. In this paper, we address this difficulty by providing a generic, but also concrete and computable, account of graphical reasoning in monoidal-theories. Our long-term goal is to support automation for graphical reasoning about computational structures.

The main concept we introduce is a formal theory of \emph{open-graphs}. Like graph-based drawings of circuits, the visual presentation of open-graphs consists of vertices connected by edges. Crucially, edges in an open-graph need not be attached to vertices. They may be unconnected at one or both ends, or even connected to themselves to form a ``circle''. In terms of a computational process, the unconnected ends of edges represent the inputs and outputs of a process. A diagram in this graphical language is interpreted as a compound computation with vertices as the atomic operations and wires defining the flow of information. For example, an electronic circuit that defines the compound logical operation of an or-gate, using not-gates around an and-gate, can be drawn as:

\begin{center}
\begin{tikzpicture}[circuit]
\node (g1) [andg] {$\land$};
\node (x2) [notg, above left=of g1] {$\lnot$};
\node (x1) [notg, below left=of g1] {$\lnot$};
\node (i1) [left=of x1, halfe] {};
\node (i2) [left=of x2, halfe] {};
\node (n3) [notg, right=of g1] {$\lnot$};
\node (o1) [halfe, right=of n3] {};
\path[dline] 
  (i1) edge (x1)
  (i2) edge (x2)
  (x1) edge (g1)
  (x2) edge (g1)
  (g1) edge (n3);
\path[dline] 
  (n3) edge (o1)
;
\end{tikzpicture}
\end{center}

Open-graphs have a rich compositional structure and a convenient algebraic language. We introduce methods for plugging graphs together, merging over common subgraphs, and cutting out pieces of a graph. Using these tools, we develop rewriting for open-graphs. In this regard, our formalism functions analogously to a type-system in a programming language: we ensure that the interface of a process is maintained by rewriting.  In particular, we show that rewriting also has a compositional nature: the decomposition of graphs by cutting their edges enables rewriting to be performed in parallel on the separated components, with a guarantee that the separate rewritten parts can be recomposed appropriately. Moreover, the compositional properties of open-graphs allow rewrite rules themselves to be rewritten using the same machinery.

To formalise the process of rewriting, we use a well-behaved embedding of the category of open-graphs into its ambient category of typed graphs. This embedding is an instance of a more general notion which we introduce as \emph{selective adhesive functors}. In particular, these functors reflect pushouts, so many results about pushouts in an adhesive category are true of so-called \emph{adhesive pushouts}, i.e. the pushouts reflected by a selective adhesive functor.

We also parameterise the category of open-graphs by a \emph{graphical signature}. This defines a collection of vertex and edge types and assigns to each vertex type its input and output types. We construct a type graph from such a signature and form the category of \emph{typed open-graphs} by slicing over this type graph. Combined with a collection of graphical rules, these typed open-graphs provide a formal way to reason with a graphical theory of some algebraic or dynamical system. We demonstrate the generality of our construction by showing that typed open-graphs can be used to construct free symmetric monoidal categories, PROPs, and a wide range of more general monoidal theories. Unlike many other (topological) constructions for diagrammatic accounts of monoidal categories, our construction involves finite data. Thus our construction enables the development of software tools that work with graphical theories. In particular, it provides the basis for employing techniques from automated reasoning, such as completion-based methods~\cite{knuth-bendix},  to mechanise working with string diagrams.

The rest of the paper is structured as follows.  In section \ref{sec:motiv}, we introduce and motivate graphical theories with boolean circuits and tensor networks. We also note key challenges in working with these systems using traditional graph-based methods. After reviewing some of these methods in section \ref{sec:related}, we define selective adhesive functors in section \ref{sec:adhesive-functors}. These give an abstract characterisation for categories that sit inside an ambient adhesive category, and inherit enough properties to support rewriting. We define open-graphs in section \ref{sec:open-graphs} and show that they have a selective adhesive functor into a slice category over $\catGraph$. In section \ref{sec:compose-graphs}, we demonstrate how open-graphs can be composed and decomposed, and use these operations for rewriting open-graphs in section \ref{sec:rewriting}. Section \ref{sec:graph-lang} defines graphical signatures, and shows how they can be used to construct typed open-graphs. Section \ref{sec:monoidal-theories} uses typed open-graphs to construct a monoidal category of cospans, and shows how such categories correspond to the free constructions of monoidal categories over a graphical signature. We also show how PROPs can be defined in this language. Finally, we conclude and discuss future work in section \ref{sec:conclusions}.


\section{Motivating Examples}
\label{sec:motiv}

We introduce two examples here to motivate the use of open-graphs for
computation. The first is the familiar language of boolean
circuits. Boolean circuits are formed by taking basic logic gates and
plugging them together. For instance, we can represent the logical
expression ``$a \land (b \land \lnot c) $'' as the graph:

\begin{center}
\beginpgfgraphicnamed{example_circuit}
\begin{tikzpicture}[circuit]
	\begin{pgfonlayer}{nodelayer}
		\node [style=valg] (0) at (-1.5, 1.5) {$a$};
		\node [style=andg] (1) at (0.75, 1) {$\land$};
		\node [style=none] (2) at (1.75, 1) {};
		\node [style=valg] (3) at (-1.5, 0.75) {$b$};
		\node [style=andg] (4) at (0, 0.5) {$\land$};
		\node [style=valg] (5) at (-1.5, 0) {$c$};
		\node [style=notg] (6) at (-0.75, 0) {$\lnot$};
	\end{pgfonlayer}
	\begin{pgfonlayer}{edgelayer}
		\draw[dline] (6) to (4);
		\draw[dline] (4) to (1);
		\draw[dline] (1) to (2.center);
		\draw[dline] (5) to (6);
		\draw[bend left=15, dline] (0) to (1);
		\draw[bend left=15, looseness=0.75, dline] (3) to (4);
	\end{pgfonlayer}
\end{tikzpicture}}
\endpgfgraphicnamed
\end{center}

Notice that the output wire of this graph does not end at a vertex. We
call this a \emph{half-edge}. We can also represent inputs to a
circuit as half-edges. In the above example, this removes the need to
introduce the variables $a$, $b$, and $c$ as inputs to the
circuit. Instead, we represent the inputs as half-edges:

\begin{center}
\beginpgfgraphicnamed{example_circuit_no_binding}
\begin{tikzpicture}[circuit]
	\begin{pgfonlayer}{nodelayer}
		\node [style=none] (0) at (-1.5, 1.5) {};
		\node [style=andg] (1) at (0.75, 1) {$\land$};
		\node [style=none] (2) at (1.75, 1) {};
		\node [style=none] (3) at (-1.5, 0.75) {};
		\node [style=andg] (4) at (0, 0.5) {$\land$};
		\node [style=none] (5) at (-1.5, 0) {};
		\node [style=notg] (6) at (-0.75, 0) {$\lnot$};
	\end{pgfonlayer}
	\begin{pgfonlayer}{edgelayer}
		\draw[dline] (4) to (1);
		\draw[dline] (5.center) to (6);
		\draw[bend left=15, looseness=0.75, dline] (3.center) to (4);
		\draw[dline] (1) to (2.center);
		\draw[dline] (6) to (4);
		\draw[bend left=15, dline] (0.center) to (1);
	\end{pgfonlayer}
\end{tikzpicture}}
\endpgfgraphicnamed
\end{center}

Now, suppose we wanted to introduce an expression like ``$a \land
(\lnot a \land b)$''. We can do this without introducing explicitly named variables by introducing a ``copy'' operation.

\begin{center}
\beginpgfgraphicnamed{example_circuit_with_copy}
\begin{tikzpicture}[circuit]
	\begin{pgfonlayer}{nodelayer}
		\node [style=none] (0) at (-1.25, 1) {};
		\node [style=copyg] (1) at (-0.5, 1) {};
		\node [style=andg] (2) at (1.5, 1) {$\land$};
		\node [style=none] (3) at (2.5, 1) {};
		\node [style=andg] (4) at (0.75, 0.5) {$\land$};
		\node [style=none] (5) at (-1.25, 0) {};
		\node [style=notg] (6) at (0, 0) {$\lnot$};
	\end{pgfonlayer}
	\begin{pgfonlayer}{edgelayer}
		\draw[dline] (2) to (3.center);
		\draw[dline] (4) to (2);
		\draw[dline] (6) to (4);
		\draw[out=30, looseness=0.75, in=150, dline] (1) to (2);
		\draw[dline] (5.center) to (6);
		\draw[bend right=15, looseness=0.75, dline] (1) to (4);
		\draw[dline] (0.center) to (1);
	\end{pgfonlayer}
\end{tikzpicture}}
\endpgfgraphicnamed
\end{center}

We can also introduce an explicit ``ignore'' operation that takes on input and produces no output. To sum up, our language has the following generators, where $b$ is a boolean value.
\begin{center}
\begin{tabular}{ccccc}
{
\begin{tikzpicture}[circuit, onegate]
\node (g) [andg] {$\land$};
\node (i1) [halfe, above left=of g.west] {}; 
\node (i2) [halfe, below left=of g.west] {};
\node (o1) [halfe, right=of g] {};
\draw [dline] (i1) -- (g);
\draw [dline] (i2) -- (g);
\draw [dline] (g) -- (o1);
\end{tikzpicture}
}& \hspace{0.5cm} & 
{
\begin{tikzpicture}[circuit]
\node (g) [notg] {$\lnot$};
\node (i1) [halfe, left=of g] {}; 
\node (o1) [halfe, right=of g] {};
\draw [dline] (i1) -- (g);
\draw [dline] (g) -- (o1);
\end{tikzpicture}
}& \hspace{0.5cm} & 
{
\begin{tikzpicture}[circuit]
\node (g) [copyg] {};
\node (i1) [halfe, left=of g] {};
\node (o1) [halfe, above right=of g] {}; 
\node (o2) [halfe, below right=of g] {};
\draw [dline] (i1) -- (g);
\draw [dline] (g) -- (o1);
\draw [dline] (g) -- (o2);
\end{tikzpicture}
} \\
And & & Not & & Copy
\end{tabular}

\medskip

\begin{tabular}{ccc}
{
\begin{tikzpicture}[circuit]
\node (g) [valg] {$b$};
\node (o1) [halfe, right=of g] {};
\draw [dline] (g) -- (o1);
\end{tikzpicture}
}
&
\hspace{1cm}
& 
{
\begin{tikzpicture}[circuit]
\node (g) [ignoreg] {};
\node (i1) [halfe, left=of g] {};
\draw [dline] (i1) -- (g);
\end{tikzpicture}
}
\\
Boolean value & & Ignore
\end{tabular}
\end{center}

Copies of these components can then be connected together by joining outputs to inputs to form compound circuits. While this is a simple language, it includes satisfiability questions, which are formed by asking whether a given graph can be rewritten to the single boolean value $T$. To answer such questions, and more generally to describe the dynamics of boolean circuits, some axioms need to be introduced. For copying and ignoring values, these are:

\begin{center}
\mbox{
\begin{tikzpicture}[circuit]
\node (g) [copyg] {};
\node (i1) [valg, left=of g] {$b$};
\node (o1) [halfe, above right=of g] {}; 
\node (o2) [halfe, below right=of g] {};
\draw [dline] (i1) -- (g);
\draw [dline] (g) -- (o1);
\draw [dline] (g) -- (o2);
\end{tikzpicture}
 =
\begin{tikzpicture}[circuit]
\node (i1) [valg] {$b$};
\node (i2) [valg, below=of i1] {$b$};
\node (o1) [halfe, right=of i1] {}; 
\node (o2) [halfe, right=of i2] {};
\draw [dline] (i1) -- (o1);
\draw [dline] (i2) -- (o2);
\end{tikzpicture}
}
\hspace{1cm}
\mbox{
\begin{tikzpicture}[circuit]
\node (i1) [valg] {$b$};
\node (o1) [ignoreg, right=of i1] {};
\draw [dline] (i1) -- (o1);
\end{tikzpicture}
 =
\begin{tikzpicture}[circuit]
\node {\ }; 
\end{tikzpicture}
}
\end{center}

\noindent The axioms for conjunction (and-gates: $\land$) and negation
(not-gates: $\lnot$) are:

\begin{eqnarray*}
\begin{tikzpicture}[circuit, onegate]
\node (g) [andg] {$\land$};
\node (i1) [valg, above left=of g.west] {$F$}; 
\node (i2) [halfe, below left=of g.west] {};
\node (o1) [halfe, right=of g] {};
\draw [dline] (i1) -- (g);
\draw [dline] (i2) -- (g);
\draw [dline] (g) -- (o1);
\end{tikzpicture}
& = &
\begin{tikzpicture}[circuit, onegate]
\node (v1) [valg] {$F$}; 
\node (o1) [halfe, right=of v1] {};
\node (o2) [ignoreg, left=of v1.center] {};
\node (i2) [halfe, left=of o2] {};
\draw [dline] (v1) -- (o1);
\draw [dline] (i2) -- (o2);
\end{tikzpicture}
\\
\begin{tikzpicture}[circuit,onegate]
\node (g) [andg] {$\land$};
\node (i1) [valg, above left=of g.west] {$T$}; 
\node (i2) [halfe, below left=of g.west] {};
\node (o1) [halfe, right=of g] {};
\draw [dline] (i1) -- (g);
\draw [dline] (i2) -- (g);
\draw [dline] (g) -- (o1);
\end{tikzpicture}
& = &
\begin{tikzpicture}[circuit]
\node (v1) [halfe] {};
\node (o1) [halfe, right=of v1] {};
\draw [dline] (v1) -- (o1);
\end{tikzpicture}
\\
\begin{tikzpicture}[circuit]
\node (v1) [valg] {$b$};
\node (g1) [notg, right=of v1] {$\lnot$}; 
\node (o1) [halfe, right=of g1] {};
\draw [dline] (v1) -- (g1);
\draw [dline] (g1) -- (o1);
\end{tikzpicture}
& = & 
\begin{tikzpicture}[circuit]
\node (v1) [valg] {$\lnot b$};
\node (o1) [halfe, right=of v1] {};
\draw [dline] (v1) -- (o1);
\end{tikzpicture}
\end{eqnarray*}

These rules characterise the computational aspects of boolean circuits. Applying the axioms from left to right can be used to evaluate the output of a circuit. The equations can also be used to simplify circuits.


Although the above rules are sufficient for evaluation (when a circuit
has all inputs given), they cannot prove all true equations about
boolean circuits. To get a complete set of equations, some additional
graphical rules are needed. For instance, the following rule, for double negation elimination, is not
directly derivable from those presented earlier:
\begin{center}
\mbox{
\begin{tikzpicture}[circuit]
\node (i1) [halfe] {};
\node (g1) [notg, right=of i1] {$\lnot$}; 
\node (g2) [notg, right=of g1] {$\lnot$}; 
\node (o1) [halfe, right=of g2] {};
\draw [dline] (i1) -- (g1);
\draw [dline] (g1) -- (g2);
\draw [dline] (g2) -- (o1);
\end{tikzpicture}
=
\begin{tikzpicture}[circuit]
\node (i1) [halfe] {};
\node (o1) [halfe, right=of i1] {};
\draw [dline] (i1.center) -- (o1.center);
\end{tikzpicture}
}
\end{center}
However, verification of such circuits can be done by exhaustive analysis directly in the graphical language: we can evaluate every combination of inputs to a graphical equation to see if the left- and right-hand sides always evaluate to the same result. This corresponds to a proof by exhaustive case analysis, much like verification by truth-tables.

Once there are sufficient equations, new rules can also be derived directly, without examining all cases. For example, using the double-negation equation above with the evaluation axioms, allows the following derivation:

\begin{center}
\begin{tikzpicture}[circuit]
\node (g1) [andg] {$\land$};
\node (x2) [notg, above left=of g1] {$\lnot$};
\node (x1) [notg, below left=of g1] {$\lnot$};
\node (i1) [left=of x1, valg] {$F$};
\node (i2) [left=of x2, halfe] {};
\node (n3) [notg, right=of g1] {$\lnot$};
\node (o1) [halfe, right=of n3] {};
\path[dline] 
  (i1) edge (x1)
  (i2) edge (x2)
  (x1) edge (g1)
  (x2) edge (g1)
  (g1) edge (n3);
\path[dline] 
  (n3) edge (o1)
;
\end{tikzpicture}
$=$
\begin{tikzpicture}[circuit]
\node (g1) [andg] {$\land$};
\node (x2) [notg, above left=of g1] {$\lnot$};
\node (x1) [valg, below left=of g1] {$T$};
\node (i2) [left=of x2, halfe] {};
\node (n3) [notg, right=of g1] {$\lnot$};
\node (o1) [halfe, right=of n3] {};
\path[dline] 
  (i2) edge (x2)
  (x1) edge (g1)
  (x2) edge (g1)
  (g1) edge (n3);
\path[dline] 
  (n3) edge (o1)
;
\end{tikzpicture}
$=$
\begin{tikzpicture}[circuit]
\node (i1) [halfe] {};
\node (g1) [notg, right=of i1] {$\lnot$}; 
\node (g2) [notg, right=of g1] {$\lnot$}; 
\node (o1) [halfe, right=of g2] {};
\draw [dline] (i1) -- (g1);
\draw [dline] (g1) -- (g2);
\draw [dline] (g2) -- (o1);
\end{tikzpicture}
$=$
\begin{tikzpicture}[circuit]
\node (i1) [halfe] {};
\node (o1) [halfe, right=of i1] {};
\path[dline] (i1.center) edge (o1.center);
\end{tikzpicture}
\end{center}

\noindent This proves that giving $F$ to the compound or-gate is the same as the identity on the other input. Such derivations can be exponentially shorter than case-analysis. Moreover, rules in a derivation can simultaneously be applied to separate parts of a graph to parallelise a computation or derivation.


Another salient feature of graph-based representations is that certain aspects of sharing and binding can be described using graphical structure. For example, consider the following rule:

\begin{center}
\mbox{
\begin{tikzpicture}[circuit]
\node (i1) [halfe] {};
\node (g1) [copyg, right=of i1] {};
\node (x2) [halfe, below right=of g1] {};
\node (x1) [notg, above right=of g1] {$\lnot$};
\node (x) [right=of g1] {};
\node (g2) [andg, right=of x] {$\land$};
\node (o1) [halfe, right=of g2] {};
\draw [dline] (i1) -- (g1);
\draw [dline] (g1) -- (x1);
\draw [dline] (x1) -- (g2);
\draw [dline] (g1) -- (x2.center) -- (g2);
\draw [dline] (g2) -- (o1);
\end{tikzpicture}
=
\begin{tikzpicture}[circuit]
\node (i2) [halfe] {};
\node (o2) [ignoreg, right=of i2] {};
\node (v1) [valg, right=of o2.center] {$F$}; 
\node (o1) [halfe, right=of v1] {};
\draw [dline] (v1) -- (o1);
\draw [dline] (i2) -- (o2);
\end{tikzpicture}
}
\end{center}

\noindent With a formula-based notation this could be described by an equation between lambda-terms: ``$\lambda x.\ ((\lnot x) \land x) = \lambda x.\ F$''. Graphical notation can treat certain forms of binding by the structure of edges with function application of formula corresponding to composition along half-edges. For example consider applying the left hand side of the equation to the term $F$, giving the lambda-term ``$\lambda x.\ ((\lnot x) \land x)\ F$''.  In this situation, beta-reduction, which reduces the formula to ``$(\lnot F) \land F$'', corresponds to an application of the copying rule. In the graphical language, the beta-reduction step is:

\begin{center}
\mbox{
\begin{tikzpicture}[circuit]
\node (i1) [valg] {$F$}; 
\node (g1) [copyg, right=of i1] {};
\node (x2) [halfe, below right=of g1] {};
\node (x1) [notg, above right=of g1] {$\lnot$};
\node (x) [right=of g1] {};
\node (g2) [andg, right=of x] {$\land$};
\node (o1) [halfe, right=of g2] {};
\draw [dline] (i1) -- (g1);
\draw [dline] (g1) -- (x1);
\draw [dline] (x1) -- (g2);
\draw [dline] (g1) -- (x2.center) -- (g2);
\draw [dline] (g2) -- (o1);
\end{tikzpicture}
=
\begin{tikzpicture}[circuit]
\node (x) {};
\node (x1) [notg, above=of x] {$\lnot$};
\node (i1) [valg, left=of x1] {$F$};
\node (x2) [valg, below=of x] {$F$};
\node (g2) [andg, right=of x] {$\land$};
\node (o1) [halfe, right=of g2] {};
\draw [dline] (i1) -- (x1);
\draw [dline] (x1) -- (g2);
\draw [dline] (x2) -- (g2);
\draw [dline] (g2) -- (o1);
\end{tikzpicture}
}
\end{center}

Notice that the graphical representation controls copying carefully: by explicit application of equational rules. This is an essential feature in graphical representations of quantum information, where copying can only happen in restricted situations.

We move now from the familiar case of logic circuit rewriting to an example from linear algebra. In (multi-)linear algebra, differential geometry, and physics, many computations can be performed using networks of \emph{tensors}. A tensor is a set of real or complex numbers, indexed by one or more integers. For example, the following is an $(n_1 \cdot n_2 \cdot n_3)$-dimensional tensor indexed by 3 integers. 
\[ \{ \chi_{i j}^{k} : i = 1..n_1; \ j = 1..n_2; \ k = 1..n_3 \} \]

Tensors are written with subscript indices, which serve the purpose of inputs, and superscript indices which are outputs. Familiar examples of tensors are vectors, $v^i$ and matrices, $M^i_j$. We can compose tensors by \emph{contraction}, i.e. ``summing together'' a lower index and an upper index of the same dimension:
\[ \xi^i_j = \sum_{kl} \chi_{kl}^i \beta_j^{k} \rho^l \]

In order to simplify such expressions, we can use the Einstein summation convention, where any repeated indexes are assumed to be summed over. However, even with this convention, contraction expressions can get quite complex. Consider this expression, involving six tensors:
\begin{equation}\label{eqn:gross-tensor}
  \alpha_{abc}^{de} \beta_{f}^{bfg} \gamma_{dh}^{i}
  \rho_{i}^{h} \phi_{eg}^{jk} \delta_l^l	
\end{equation}

In order to understand this expression, one has to keep track of $11$ indices, which makes computations time-consuming and error-prone. We can instead represent this expression using a graphical language introduced by Penrose \cite{Penrose1971Applications-of}. Tensors are drawn as boxes, and summations over pairs of indices as wires. The ``identity'' tensor (i.e. the Dirac delta $\delta_i^j$) is also drawn as a wire. The un-summed, or ``free'' indices are left as dangling wires, and sums $\sum \delta_i^i$ are represented as circles. In the graphical notion, expression (\ref{eqn:gross-tensor}) becomes the following diagram:

\begin{center}
\beginpgfgraphicnamed{box_diagram}
\begin{tikzpicture}
	\path [use as bounding box] (-2.75,-2) rectangle (3,2.25);
	\begin{pgfonlayer}{nodelayer}
		\node [style=none] (0) at (-1.5, 2) {};
		\node [style=square box] (1) at (-1.5, 1) {$\alpha$};
		\node [style=square box] (2) at (0.5, 1) {$\beta$};
		\node [style=none] (3) at (2.25, 0.25) {};
		\node [style=square box] (4) at (-2.25, -0.5) {$\gamma$};
		\node [style=square box] (5) at (0.5, -0.5) {$\phi$};
		\node [style=none] (6) at (2.25, -0.5) {};
		\node [style=square box] (7) at (-1.25, -1.25) {$\rho$};
		\node [style=none] (8) at (0, -1.75) {};
		\node [style=none] (9) at (1, -1.75) {};
	\end{pgfonlayer}
	\begin{pgfonlayer}{edgelayer}
		\draw[bend left=15, style=diredge] (5) to (9.center);
		\draw[out=-60, style=diredge, in=180] (1) to (5);
		\draw[style=diredge, out=90, in=0] (7) to (4);
		\draw[style=diredge, in=180, out=180, looseness=1.75] (3.center) to (6.center);
		\draw[style=diredge] (0.center) to (1);
		\draw[style=diredge, bend right=15] (5) to (8.center);
		\draw[out=225, style=diredge, in=90] (1) to (4);
		\draw[out=0, looseness=1.75, in=0] (3.center) to (6.center);
		\draw[out=270, style=diredge, in=180] (4) to (7);
		\draw[out=0, style=diredge, in=90, loop] (2) to ();
		\draw[style=diredge] (2) to (5);
		\draw[style=diredge] (2) to (1);
	\end{pgfonlayer}
\end{tikzpicture}}
\endpgfgraphicnamed
\end{center}

These diagrams are called tensor networks. We can then work directly with these graphs, expressing equations of tensor expressions as graph rewrites rules.

\begin{center}
\beginpgfgraphicnamed{rewrite}
\begin{tikzpicture}
	\path [use as bounding box] (-2.5,-1.25) rectangle (3,1.25);
	\begin{pgfonlayer}{nodelayer}
		\node [style=none] (0) at (-1.5, 0.75) {};
		\node [style=none] (1) at (1.75, 0.75) {};
		\node [style=square box, scale=0.8] (2) at (0.75, 0.5) {$\xi$};
		\node [style=square box] (3) at (-1.5, 0) {$\alpha$};
		\node [style=none] (4) at (-0.75, 0) {};
		\node [style=none] (5) at (0, 0) {$\Rightarrow$};
		\node [style=square box] (6) at (1.75, 0) {$\chi$};
		\node [style=none] (7) at (2.5, 0) {};
		\node [style=none] (8) at (-2.25, -0.75) {};
		\node [style=none] (9) at (-0.75, -0.75) {};
		\node [style=none] (10) at (1, -0.75) {};
		\node [style=none] (11) at (2.5, -0.75) {};
	\end{pgfonlayer}
	\begin{pgfonlayer}{edgelayer}
		\draw[style=diredge] (4.center) to (3);
		\draw[style=diredge, bend right] (2) to (6);
		\draw[style=diredge] (7.center) to (6);
		\draw[out=-60, style=diredge, in=150] (6) to (11.center);
		\draw[style=diredge] (0.center) to (3);
		\draw[out=-60, style=diredge, in=150] (3) to (9.center);
		\draw[style=diredge, bend right] (6) to (2);
		\draw[out=210, style=diredge, in=60] (3) to (8.center);
		\draw[out=210, style=diredge, in=60] (6) to (10.center);
		\draw[style=diredge] (1.center) to (6);
	\end{pgfonlayer}
\end{tikzpicture}}
\endpgfgraphicnamed
\end{center}

More generally, circuit diagrams, tensor networks, and many other graphical formalisms, can be expressed as arrows in some symmetric monoidal category. The diagrams above can then be interpreted as examples of a diagrammatic language common to all symmetric monoidal categories. These kinds of graphical languages introduce a particular challenge to formalising rewriting. For instance, consider a simple graph containing a self loop:  

\begin{center}
	$G :$
	\begin{tikzpicture}[circuit, node distance=0.5em and 0.7em]
	\node (x1) [] {};
	\node (g) [andg, right=of x1, inner sep=0.4em] {};
	\node (x2) [right=of g] {};
	\node (x3) [below=of g] {};

	\draw [in=0, out=0, looseness=2.00] (g.east) to (x3.center);
	\draw [line,in=180, out=180, looseness=2.00, shorten <=0 pt, shorten >=-0.1 pt, dline] (x3.center) to (g.west);


	\end{tikzpicture}
\end{center}
and a rewrite rule that rewrites the box to a line: 
\begin{center}
	\raisebox{-0.25em}{$L:$}
	\begin{tikzpicture}[circuit]
	\node (x1) [halfe] {};
	\node (g) [andg, right=of x1, inner sep=0.4em] {};
	\node (x2) [halfe,right=of g] {};
	\draw [dline] (x1) -- (g);
  \draw [dline] (g) -- (x2);
	\end{tikzpicture}
	\raisebox{-0.25em}{$\Rightarrow\ R:$}
	\begin{tikzpicture}[circuit]
	\node (x1) [halfe] {};
	\node (x2) [halfe,right=of x1] {};
	\draw [dline] (x1.center) -- (x2.center);
	\end{tikzpicture}
\end{center}
Then, the graph resulting from rewriting the box with a self loop should be a circular edge with no vertices:
\begin{center}
	\begin{tikzpicture}[circuit]
		\node [style=none] (0) at (0, 0.25) {};
		\node [style=none] (1) at (0, -0.25) {};
		\draw[in=180, out=180, looseness=2.00, shorten >=-2.5 pt, dline] (1.center) to (0.center);
		\draw[in=0, out=0, looseness=2.00, shorten <=0 pt, shorten >=-0.1 pt, line] (0.center) to (1.center);
\end{tikzpicture}}
\end{center}
Graphs of this shape are beyond the normal notion of what one might consider a ``graph'', yet in many contexts, they have a well-behaved interpretation. For instance, in tensor networks, this is the trace of the identity matrix, i.e. the dimension of the underlying vector space.

Suppose we tried na\"ively to formalise this situation, by representing half-edges as edges connected to ``dummy'' points at the boundary.
\begin{center}
	\raisebox{-0.25em}{$L:$}
	\begin{tikzpicture}[circuit]
	\node (x1) [ipoint] {};
	\node (g) [andg, right=of x1, inner sep=0.4em] {};
	\node (x2) [ipoint,right=of g] {};
	\draw [dline] (x1) -- (g);
  \draw [dline] (g) -- (x2);
	\end{tikzpicture}
	\raisebox{-0.25em}{$\Rightarrow\ R:$}
	\begin{tikzpicture}[circuit]
	\node (x1) [ipoint] {};
	\node (x2) [ipoint,right=of x1] {};
	\draw [dline] (x1.center) -- (x2.center);
	\end{tikzpicture}
\end{center}
Then, the left hand side of the rewrite does not occur as a subgraph of $G$. So, maybe we could make an exception and not require that $L$ be a subgraph if $G$, but just have \emph{some} mapping on to $G$. If we do this, the box and both dummy points could be mapped on to the box in $G$. However, the result of removing the image of $L$ and replacing it with $R$ is a line, not a circle. A graph that previously had no inputs or outputs is rewritten to an graph with one input and one output, which contradicts the interpretation of rewrite rules representing some kind of ``local'' identity on a diagram. We could make an exception here, but one quickly becomes overwhelmed by the number of special cases that need consideration. We can address this problem uniformly by allowing edge-points. These extra ``dummy'' points can be introduced not only at the boundaries of graphs, but \emph{along} edges as well. This allows rewrites to be performed in a localised manner, without compromising the validity of the graph as a whole.

\section{Related Work}
\label{sec:related}

There is a significant strand of work concerning graph transformations~\cite{Ehrig:Book:2006, Baldan-gts} and rewriting with graph-based presentations of computational processes~\cite{Lafont09diagramrewriting,Lafont08,Lafont2003,Lafont1990}. An extension of these formalisms, known as bigraphs, provides another general formalism for graphical rewriting~\cite{milner2006pure}. Bigraphs are more complex in that they use hyper-graphs and introduce a rich hierarchical structure. Another formalism for graphs, called \emph{site-graphs}, is used in systems biology~\cite{danos2004formal}. These give each vertex a set of `sites' to which edges can be be connected.  The distinction between these forms of graphical rewriting and our formalism is that we have an extended notion of ``graph'' that allows for edges to be dangling at one or both ends, or be connected to themselves. We also consider these graphs as having a fixed interface, drawn as a collection of input and output wires and consider only graph rewrite rules that preserve this interface. In this regard, we provide a kind of static checking for well-behaved graph transformation systems, much like types do for functional programs. This property is crucial to the graphical formalisms of many of the systems we wish to model.  Where our constructions and those of traditional graph transformation share significant similarity is in its reliance on adhesive categories~\cite{Lack:2005lr} and the \emph{double-pushout} construction for graph rewriting~\cite{Ehrig1973}. In addition, our construction uses the presentation of typed graphs as a slice over the (adhesive) category of graphs, as presented in~\cite{Ehrig2008}. In this way, our theory can be viewed as a concrete realisation of the theory of adhesive categories and DPO rewriting, as well as a bridge from this work to the (computational) study of monoidal categories.

Maps in many kinds of monoidal categories admit rich graphical languages~\cite{selinger2009survey}. These languages become particularly interesting when one studies algebraic structures within monoidal categories. A developing field in category theory studies these algebras, and how they interact. \cite{Lack:2004p1160} has shown that a certain class of these monoidal algebras, called \emph{PROPs} can be composed in much the same way Beck showed we can compose monads~\cite{Beck1969}. Even richer notions of interacting graphical structures have found applications in the study of non-commuting observables~\cite{Coecke:2008jo} and entanglement~\cite{CoeckeKissinger2010} in quantum mechanics.

In earlier work, we presented a formalism for reasoning about categorical models of quantum information~\cite{2009:DixonDuncan:AMAI}.  In~\cite{2010:DDK:DCM}, we proposed several improvements on this early work and suggested that matching and composition became dual notions. In this paper, we have clarified the formalism in the context of adhesive categories, proved the key properties, and shown how to construct models of monoidal theories.


%



\section{Selective Adhesive Functors and Rewriting}
\label{sec:adhesive-functors}

Adhesive categories provide a useful and quite general setting for performing rewrites on graph-like structures. The distinguishing characteristic of adhesive categories is that pushouts along monomorphisms behave particularly well with respect to pullbacks. The categories we introduce for open-graphs are not exactly adhesive categories, but they live \emph{inside} of adhesive categories and inherit ``enough adhesivity'' to permit graph rewriting. 

In particular, we introduce categories for open-graphs which are subcategories of slices over the category of graphs (\catGraph{}). Since a slice over an adhesive category is adhesive~\cite{Lack:2005lr} and \catGraph{} is an adhesive category, our categories of open-graphs have inclusions into adhesive categories. To make use of ambient adhesive categories, we define a suitably well-behaved inclusion functor, called a selective adhesive functor. This is well-behaved in the sense that essential adhesivity properties for rewriting can be passed back to the subcategory. To define these functors, we first recall the notion of a van Kampen square.


\begin{definition}\label{def:van-kampen}
	A van Kampen square is a pushout
	\begin{center}
		\posquare{A}{B}{C}{D}{}{}{}{}
	\end{center}
	
	Such that for any commutative cube
	\begin{center}
		\begin{tikzpicture}
		  \matrix (m) [cdiag,row sep=1em,column sep=1em]{
		    & A' & & B' \\
		    C' & & D' & \\
		    & A & & B \\
		    C & & D & \\};
		  \path[arrs]
		    (m-1-2) edge (m-1-4) edge (m-2-1)
		            edge (m-3-2)
		    (m-1-4) edge (m-3-4) edge (m-2-3)
		    (m-2-1) edge [-,line width=4pt,draw=white] (m-2-3)
		            edge (m-2-3) edge (m-4-1)
		    (m-3-2) edge (m-3-4)
		            edge (m-4-1)
		    (m-4-1) edge (m-4-3)
		    (m-3-4) edge (m-4-3)
		    (m-2-3) edge [-,line width=4pt,draw=white] (m-4-3)
		            edge (m-4-3);
		\end{tikzpicture}
	\end{center}
	where the back two faces are pullbacks, the following are equivalent:
	\begin{itemize}
		\item the front two faces are pullbacks
		\item the top face is a pushout
	\end{itemize}
\end{definition}

\begin{definition}\label{def:adhesive}
	\cite{Lack:2005lr}. A category $\mathcal A$ is said to be \emph{adhesive} if
	\begin{enumerate}
		\item $\mathcal A$ has pushouts along monomorphisms,
		\item $\mathcal A$ has pullbacks,
		\item and pushouts along monomorphisms in $\mathcal A$ are van
		      Kampen squares.
	\end{enumerate}
\end{definition}

A crucial property of adhesive categories is that they have unique pushout complements over monomorphisms, when they exist.

\begin{definition}\label{def:pushout-complement}
	A \emph{pushout complement} for a pair of arrows $(b : B \rightarrow K, f : K \rightarrow G)$, is another pair of arrows $(c,g)$ such that
	\begin{center}
		\posquare{B}{K}{G'}{G}{b}{g}{c}{f}
	\end{center}
	is a pushout.
\end{definition}

\begin{lemma}\label{lem:unique-pushout-complements}
	\cite{Lack:2005lr}. If a pair of arrows $(b,f)$, where $b$ is mono, has a pushout complement, it is unique up to isomorphism. That is, for any two pushout complements, $(c,g)$ and $(c',g')$, there exists an isomorphism $\phi$ making the following diagram commute:
	\begin{equation}\label{dia:unique-compl}
		\csquareslant{B}{G'}{G''}{G}{c}{g'}{c'}{g}{\phi}
	\end{equation}
\end{lemma}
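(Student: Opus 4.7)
The plan is to apply the van Kampen property of the first pushout. The main idea is to show that both $G'$ and $G''$ are canonically isomorphic to the pullback $P := G' \times_G G''$, so that $\phi$ can be obtained by composing these comparison maps.

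First, I would form $P$ with projections $\pi' : P \to G'$ and $\pi'' : P \to G''$. Since $gc = fb = g'c'$, the universal property of $P$ yields a unique $\alpha : B \to P$ satisfying $\pi'\alpha = c$ and $\pi''\alpha = c'$. I would then build a commutative cube whose bottom face is the first pushout $(B, K, G', G)$ with arrows $(b, c, f, g)$, and whose depth arrows at the four corners are $\mathrm{id}_B$, $b$, $\pi'$, and $g'$ respectively. A direct computation shows that the top face takes the form $(B, B, P, G'')$ with arrows $(\mathrm{id}_B, \alpha, c', \pi'')$ and that the whole cube commutes.

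The two front faces of this cube (those containing $G$) are pullbacks: the front-left is the defining pullback of $P$, and the front-right is precisely the second pushout square, which is a pullback because adhesive pushouts along monomorphisms are automatically pullbacks. The two back faces (those containing $B$) are also pullbacks, using that pulling back the monomorphism $b$ along itself yields $B$ with identity projections, together with pullback pasting to propagate this to the back face involving $\pi'$. The van Kampen property applied to the bottom pushout then forces the top face to be a pushout. Since one of its arrows is $\mathrm{id}_B$, this pushout is trivially $P$ itself, so $\pi''$ must be an isomorphism. A symmetric argument (swapping $G'$ and $G''$) shows that $\pi'$ is also an isomorphism, and $\phi := \pi'' \circ (\pi')^{-1}$ gives the required iso. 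The commutativity $\phi c = c'$ and $g' \phi = g$ drops out of $\pi' \alpha = c$, $\pi'' \alpha = c'$, and $g \pi' = g' \pi''$.

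The main obstacle is verifying that the back faces of the cube really are pullbacks, which hinges on the standard fact that adhesive pushouts along monomorphisms are also pullbacks. This enables the identifications $B \cong K \times_G G''$ (from the second pushout) and $B \cong B \times_K B$ (from the mono $b$), without which the cube would not satisfy the hypotheses of van Kampen. Once these identifications are made, the van Kampen condition does the heavy lifting and the remainder is essentially diagrammatic bookkeeping.
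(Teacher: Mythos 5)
Your proof is correct, and it reconstructs essentially verbatim the standard van Kampen argument for uniqueness of pushout complements from Lack and Soboci\'nski's paper on adhesive categories, which is exactly the source this lemma is imported from: the paper itself gives no proof and simply cites \cite{Lack:2005lr}. All the delicate points check out --- the back face over $\pi'$ is a pullback because $B \times_{G'} P \cong B \times_G G'' \cong B \times_K B \cong B$ (using that the second pushout is a pullback and that $b$ is mono), and the degenerate top pushout with an identity leg does force $\pi''$ to be an isomorphism --- so nothing is missing.
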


In order to define subcategories of adhesive categories, where a selected class of pushout squares has unique pushout complements, we define a selective adhesive functor.

\begin{definition}[Selective adhesive functor]\label{def:adhesive-functor}
	Let $\mathcal C$ be a category and $\mathcal A$ be an adhesive category. A functor $S : \mathcal C \rightarrow \mathcal A$ is called a \emph{selective adhesive functor} if it
	\begin{enumerate}
		\item is faithful,
		\item preserves monomorphisms,
		\item creates isomorphisms,
		\item and reflects pushouts.
	\end{enumerate}
\end{definition}

\begin{definition}[$S$-adhesive spans and pushouts]\label{def:adhesive-span}
	Let $S : \mathcal C \rightarrow \mathcal A$ be a selective adhesive functor. A span $A \overset{f}{\longleftarrow} B \overset{g}{\longrightarrow} C$ in $\mathcal{C}$ is called an \emph{$S$-adhesive span} if it has a pushout, and that pushout is preserved by $S$. Such pushouts are called \emph{$S$-adhesive pushouts}.
\end{definition}

Since $S$ reflects \emph{all} pushouts, we could also define $S$-adhesive spans as spans that have a pushout reflected by $S$.

\begin{definition}[$S$-adhesive pushout complement]\label{def:adhesive-complement}
	An \emph{$S$-adhesive pushout complement} for a pair of arrows $(b,f)$ is a pushout complement, where the following diagram is an $S$-adhesive pushout.
	\begin{center}
		\posquare{B}{K}{G'}{G}{b}{g}{c}{f}
	\end{center}
	The map $b$ is called the \emph{boundary} of $K$ and $c$ is called the \emph{coboundary} of $K$ in $G$. 
\end{definition}

Informally, $G'$ should be thought of as $G$ with $K$ cut out from it, where $b$ identifies boundary of $K$, and the coboundary, $c$, identifies the boundary of where $K$ was cut out from $G$. 

When it is convenient, we shall use the notation $G -_{b,f} K := G'$ to denote the pushout complement defined above. In later sections, the boundary map $b$ will be uniquely defined by $K$, so we shall then write simply $G -_f K$.  Since the categories we are concerned with come with a canonical notion of boundary, we typically only require that the boundary of $K$ be mono; unlike \cite{Ehrig2008}, which requires the induced pushout to satisfy an initiality condition.

\begin{lemma}\label{lem:adhesive-complement}
	If a pair of arrows $(b, f)$, where $b$ is mono, have an $S$-adhesive pushout complement, it is unique up to isomorphism.
\end{lemma}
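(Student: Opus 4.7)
The plan is to reduce the question to the adhesive category $\mathcal{A}$ via $S$, invoke Lemma \ref{lem:unique-pushout-complements} there, and then pull the resulting isomorphism back to $\mathcal{C}$ using the properties bundled into the definition of a selective adhesive functor.

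First, suppose $(c : B \to G',\, g : G' \to G)$ and $(c' : B \to G'',\, g' : G'' \to G)$ are two $S$-adhesive pushout complements for the pair $(b,f)$. Because both complements are $S$-adhesive, the functor $S$ preserves the two pushout squares, so in $\mathcal{A}$ we obtain two pushout squares sharing the span $S(K) \xleftarrow{S(b)} S(B) \xrightarrow{} S(G'),\, S(G'')$ with common pushout $S(G)$ and common leg $S(f)$. Since $S$ preserves monomorphisms and $b$ is mono, $S(b)$ is mono in $\mathcal{A}$, so the hypotheses of Lemma \ref{lem:unique-pushout-complements} are met in $\mathcal{A}$.

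Second, Lemma \ref{lem:unique-pushout-complements} yields a unique isomorphism $\phi : S(G') \to S(G'')$ satisfying $\phi \circ S(c) = S(c')$ and $S(g') \circ \phi = S(g)$. Since $S$ creates isomorphisms, this $\phi$ lifts to a (necessarily unique) isomorphism $\tilde\phi : G' \to G''$ in $\mathcal{C}$ with $S(\tilde\phi) = \phi$. It remains to check that $\tilde\phi$ makes the analogue of diagram (\ref{dia:unique-compl}) commute in $\mathcal{C}$: applying $S$ to the equations $\tilde\phi \circ c = c'$ and $g' \circ \tilde\phi = g$ gives the already-established identities in $\mathcal{A}$, so by faithfulness of $S$ the equations hold in $\mathcal{C}$.

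The main subtlety, and really the only delicate step, is the invocation of ``creates isomorphisms'' in step two. I am reading this in its strong form, namely that any iso $S(X) \to S(Y)$ in $\mathcal{A}$ has a unique lift to an iso $X \to Y$ in $\mathcal{C}$ mapping onto it; this is what lets the iso $\phi$ land on the correct objects rather than merely on some isomorphic copies. With that clarification in hand, faithfulness does the rest: it forces the commutativity to descend from $\mathcal{A}$ to $\mathcal{C}$, and also ensures that the lift $\tilde\phi$ is unique, so the pushout complement in $\mathcal{C}$ is determined up to a uniquely specified isomorphism.
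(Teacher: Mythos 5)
Your proof is correct and follows essentially the same route as the paper's: apply $S$ to both pushout squares, use preservation of monos to invoke Lemma~\ref{lem:unique-pushout-complements} in $\mathcal A$, lift the resulting isomorphism via creation of isomorphisms, and conclude commutativity by faithfulness. Your reading of ``creates isomorphisms'' is exactly the one the paper relies on.
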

\begin{proof}
	Let $(c,g)$ and $(c',g')$ be $S$-adhesive pushout complements. Then the following diagrams are pushouts in the adhesive category $\mathcal A$.
	\begin{center}
		\posquare{S B}{S K}{S G'}{S G}{S b}{S g}{S c}{S f}
		\qquad
		\posquare{S B}{S K}{S G''}{S G}{S b}{S g'}{S c'}{S f}
	\end{center}
	
	Since $S$ preserves monos, these are both pushout complements of $(S b, S f)$ for $S b$ mono. So this diagram commutes in $\mathcal A$, for $\phi'$ an isomorphism.
	\begin{center}
		\csquareslant{S B}{S G'}{S G''}{S G}{S c}{S g'}{S c'}{S g}{\phi'}
	\end{center}
	
	Since $S$ creates isomorphisms, there exists an iso $\phi : G' \rightarrow G''$ such that $S \phi = \phi'$. Substituting this map in, we have:
	\begin{center}
		\csquareslant{S B}{S G'}{S G''}{S G}{S c}{S g'}{S c'}{S g}{S \phi}
	\end{center}
	
	Diagram (\ref{dia:unique-compl}) commutes by the faithfulness of $S$.
\end{proof}

\begin{definition}[Rewrite rule]\label{def:adhesive-rewrite-rule}
	A rewrite rule $\rwrulew{L}{b_1,b_2}{R}$ is a span of monomorphisms:
	\[ L \overset{b_1}{\longleftarrow} B \overset{b_2}{\longrightarrow} R \]
\end{definition}

For the sake of conciseness, we will often denote a rewrite rule simply as $\rwrule{L}{R}$, leaving the boundary maps implicit. When we do this, each time we write $\rwrule{L}{R}$, it denotes the same rewrite rule, and in particular, it has the same boundary maps.

\begin{definition}[$S$-matching]\label{def:adhesive-matching}
	For a rewrite rule $\rwrulew{L}{b_1,b_2}{R}$, a monomorphism $m : L \rightarrow G$ is called an \emph{$S$-matching} if $B \overset{b_1}{\longrightarrow} L \overset{m}{\longrightarrow} G$ has an $S$-adhesive pushout complement.
\end{definition}

\begin{definition}[$S$-adhesive rewrite]\label{def:adhesive-rewrite}
\begin{changebar}	Let $\rwrulew{L}{b_1,b_2}{R}$ be a rewrite rule and $m : L \rightarrow G$ be an $S$-adhesive matching. Then for $G'$ the $S$-adhesive pushout complement of $B \overset{b_1}{\longrightarrow} L \overset{m}{\longrightarrow} G$, the following diagram is called an \emph{$S$-adhesive rewrite} if the right hand pushout is $S$-adhesive:
\end{changebar}
	\begin{center}
		\begin{tikzpicture}
			\matrix (m) [cdiag] {
	        L & B  & R \\
	        G & G' & H \\
			};
			\path [arrs]
		  	  	(m-1-2) edge node [swap] {$b_1$} (m-1-1)
		  	  	(m-1-2) edge node {$c$} (m-2-2)
				(m-1-2) edge node {$b_2$} (m-1-3)

		  	  	(m-1-1) edge node [swap] {$m$} (m-2-1)
		  	  	(m-1-2) edge node {} (m-2-2)
		  	  	(m-1-3) edge node {} (m-2-3)

		  	  	(m-2-2) edge node {} (m-2-1)
		  	  	(m-2-2) edge node {} (m-2-3);
		\NEbracket{(m-2-1)};
	    \NWbracket{(m-2-3)};
		\end{tikzpicture}
	\end{center}
	
	In such a case, we write $H$ as $G[\rwsubstw{L}{b_1,b_2}{R}]_m$.
\end{definition}

Note that the left hand pushout above is also $S$-adhesive, by the definition of $S$-matching. We often don't care about the particular rewrite rule and matching used to rewrite one graph into another, but merely that there \emph{exists} such a rewrite involving a rule in some fixed set. For this, we introduce rewrite systems and a ``rewrites-to'' relation.

\begin{definition}[Rewrite system]
\label{def:rewrite-system}
	A set of rewrite rules $\mathbb S$ is called a \emph{rewrite system}. We define the relation $G\,\rewritesto_{\mathbb S}\,H$ to mean there exists a rule $\rwrule{L}{R} \in \mathbb S$ and an $S$-adhesive matching $m : L \rightarrow G$ such that $H \cong G[\rwrule{L}{R}]_m$. The reflexive, transitive closure of $\rewritesto_{\mathbb S}$ is denoted $\rewritetrans_{\mathbb S}$, and the reflexive, symmetric, transitive closure as $\rewriteequiv_{\mathbb S}$.
\end{definition}

\begin{theorem}\label{thm:plugging-and-subtraction}
$S$-adhesive pushout complements commute with adhesive pushouts. Consider the following diagram, where $b$ is mono, $(b,m)$ has an $S$-adhesive pushout complement, and $(p,q)$ and $(p',q)$ are both $S$-adhesive spans.
	\begin{center}
		\begin{tikzpicture}
		\matrix (m) [cdiag,row sep=1em] {
		B & G -_{b,m} K &   &    \\
	      &             & P & H  \\
		K & G           &   &    \\
		};
		\path [arrs]
		  (m-1-1) edge node {$c$} (m-1-2)
		  (m-1-1) edge node [swap] {$b$} (m-3-1)
		  (m-1-2) edge node [swap] {$s$} (m-3-2)
		  (m-3-1) edge node [swap] {$m$} (m-3-2)

		  (m-2-3) edge node [swap] {$p'$} (m-1-2)
		  (m-2-3) edge node {$p$}  (m-3-2)
		  (m-2-3) edge node {$q$}  (m-2-4);
		\NWbracket{(m-3-2)}
		\end{tikzpicture}
	\end{center}
	
	Then, for the pushout injections $i : G \hookrightarrow G +_{p,q} H$ and $i' : G -_{b,m} K \hookrightarrow (G -_{b,m} K) +_{p',q} H$, there is an open-graph isomorphism, commuting with the coboundaries $c$ and $c'$ of $K$ in $G$ and $G +_{p,q} H$ respectively.
	\begin{equation}\label{dia:compatible-with-coboundary}
		\csquare{B}{(G +_{p,q} H) -_{b,im} K}
		        {G -_{b,m} K}{(G -_{b,m} K) +_{p',q} H}
		        {c'}{i'}{c}{\cong}
	\end{equation}
\end{theorem}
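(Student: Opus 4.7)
The strategy is to transport the problem via $S$ into the ambient adhesive category $\mathcal A$, carry out an associativity-of-colimits argument there, and reflect the resulting pushout back to $\mathcal C$ using that $S$ reflects pushouts; the desired isomorphism then drops out of the uniqueness of $S$-adhesive pushout complements (Lemma~\ref{lem:adhesive-complement}). First I would construct a canonical comparison map $\psi : H' \to G +_{p,q} H$ in $\mathcal C$, where $H' := (G -_{b,m} K) +_{p',q} H$. Writing $i, j$ for the pushout injections of $G +_{p,q} H$ and using $p = s \circ p'$ (commutativity of the given diagram), the pair $(i \circ s, j)$ satisfies $(i \circ s) \circ p' = i \circ p = j \circ q$, so the universal property of $H'$ yields a unique $\psi$ with $\psi \circ i' = i \circ s$ and $\psi \circ j' = j$, where $i', j'$ are the pushout injections of $H'$. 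Combined with $m \circ b = s \circ c$ from the given pushout square, this shows that the square with corners $B, K, H', G+H$ and edges $b$, $i \circ m$, $i' \circ c$, $\psi$ commutes in $\mathcal C$.

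The heart of the proof is to show that the image of this square under $S$ is a pushout in $\mathcal A$. For this I would consider the zig-zag diagram $K \stackrel{b}{\leftarrow} B \stackrel{c}{\to} G-K \stackrel{p'}{\leftarrow} P \stackrel{q}{\to} H$ and compute its colimit in $\mathcal A$ in two ways. Grouping the leftmost span first yields $SG$ (the $(b,m)$-pushout is $S$-preserved by hypothesis), and then pushing out along $SP \to SH$ yields $S(G+H)$ (since $(p,q)$ is $S$-adhesive and $p = s \circ p'$). Grouping the rightmost span first yields $SH'$ (since $(p',q)$ is $S$-adhesive), and then pushing out $SK \leftarrow SB \to SH'$ along the composite $SB \to S(G-K) \to SH'$ yields the pushout of that span. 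Both computations deliver the colimit of the same zig-zag, so by the standard associativity of pushouts the two iterated colimits are canonically isomorphic; consequently the second pushout object is identified with $S(G+H)$, which is precisely the claim that the square above is a pushout in $\mathcal A$.

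By the reflection property of $S$, the square is then a pushout already in $\mathcal C$, and since $S$ visibly preserves it, the pair $(i' \circ c, \psi)$ is an $S$-adhesive pushout complement of $(b, i \circ m)$. The boundary $b$ is mono by hypothesis, so Lemma~\ref{lem:adhesive-complement} yields an isomorphism $(G +_{p,q} H) -_{b,im} K \cong H'$ commuting with the coboundaries $c'$ and $i' \circ c$, which is exactly diagram~(\ref{dia:compatible-with-coboundary}). The main obstacle is a careful tracking of cone components: one has to verify that the canonical isomorphism between the two iterated colimits in $\mathcal A$ really sends the pushout injections of the second computation to $S\psi$ and $S(i \circ m)$ as constructed, so that the pushout reflected back to $\mathcal C$ is exactly the square built by hand rather than some other commuting square. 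Once this bookkeeping is handled, the adhesive machinery of preservation, reflection, and uniqueness of complements supplies the remainder.
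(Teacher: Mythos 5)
Your proposal is correct and follows essentially the same route as the paper: both arguments reduce to associativity of pushouts (computing the composite colimit of the zig-zag $K \leftarrow B \to G-_{b,m}K \leftarrow P \to H$ in two orders) followed by uniqueness of $S$-adhesive pushout complements via Lemma~\ref{lem:adhesive-complement}. Your version is somewhat more explicit than the paper's about transporting the diagrams through $S$ into the ambient adhesive category and reflecting the resulting pushout back, but this is a matter of bookkeeping rather than a different proof.
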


\begin{proof}
	The proof follows from the associativity of pushouts and the uniqueness of pushout complements. First, note that, in the following diagram, [1] commutes and is a pushout because $s p' = p$.
	\begin{center}
		\begin{tikzpicture}
		\node at (1,0) {\small [1]};
		\matrix (m) [cdiag] {
		  & P           &  H            \\
		B & G -_{b,m} K &               \\
		K & G           &  G +_{p,q} H  \\
		};
		\path [arrs]
		 (m-1-2) edge node {$q$} (m-1-3)
		 (m-1-2) edge node {$p'$} (m-2-2)
		 (m-2-1) edge node {$c$} (m-2-2)
		 (m-2-1) edge node {$b$} (m-3-1)
		 (m-3-1) edge node [swap] {$m$} (m-3-2)
		 (m-2-2) edge node {$s$} (m-3-2)
		 (m-1-3) edge (m-3-3)
		 (m-3-2) edge node [swap] {$i$} (m-3-3);
		\NWbracket{(m-3-2)}
		\NWbracket{(m-3-3)}
		\end{tikzpicture}
	\end{center}
	By associativity of pushouts, the following diagram also commutes, and the marked squares are pushouts:
	\begin{center}
		\begin{tikzpicture}
		\node at (-0.75,-0.9) {\small [2]};
		\matrix (m) [cdiag] {
		  & P           &  H                                 \\
		B & G -_{b,m} K &   (G -_{b,m} K) +_{p',q} H        \\
		K & G           &  G +_{p,q} H                       \\
		};
		\path [arrs]
		 (m-1-2) edge node {$q$} (m-1-3)
		 (m-1-2) edge node {$p'$} (m-2-2)
		 (m-2-1) edge node {$c$} (m-2-2)
		 (m-2-1) edge node {$b$} (m-3-1)
		 (m-3-1) edge node [swap] {$m$} (m-3-2)
		 (m-3-2) edge node [swap] {$i$} (m-3-3)
		 (m-2-2) edge (m-2-3)
		 (m-1-3) edge (m-2-3)
		 (m-2-3) edge (m-3-3);
		\NWbracket{(m-2-3)}
		\NWbracket{(m-3-3)}
		\end{tikzpicture}
	\end{center}
Now compare [2] to the subtraction of $im : K \rightarrow G +_{p,q} H$:
	\begin{center}
		\posquare{B}{(G +_{p,q} H) -_{b,im} K}
		         {K}         {G +_{p,q} H}
		         {}{i m}{b}{}
	\end{center}
The result then follows from uniqueness of pushout complements.
\end{proof}

\begin{theorem}\label{thm:pushout-and-rewrite}
	$S$-adhesive rewrites commute with $S$-adhesive pushouts. Let $m : L \rightarrow G$ be a matching of $\rwsubstw{L}{b_1,b_2}{R}$. The rewrite is computed as the double pushout:
	\begin{center}
		\begin{tikzpicture}
		\matrix (m) [cdiag] {
		L & B           & R                   \\
		G & G -_{b,m} L & G[\rwsubst{L}{R}]_m \\
		};
		\path [arrs]
		 (m-1-2) edge node [swap] {$b_1$} (m-1-1)
		 (m-1-2) edge node {$b_2$} (m-1-3)

		 (m-2-2) edge node {$s$} (m-2-1)
		 (m-2-2) edge node [swap] {$s'$} (m-2-3)

		 (m-1-1) edge node [swap] {$m$} (m-2-1)
		 (m-1-2) edge node {$c$} (m-2-2)
		 (m-1-3) edge node {$m'$} (m-2-3);
		\NEbracket{(m-2-1)}
		\NWbracket{(m-2-3)}
		\end{tikzpicture}
	\end{center}
	Let $(p,q)$, $(p', q)$ and $(\widehat p, q)$ be three adhesive spans, such that:
	\begin{equation}\label{dia:compatible}
		\begin{tikzpicture}
		\matrix (m) [cdiag] {
		G                   &   &   \\
		G -_{b_1,m} L       & P & H \\
		G[\rwsubst{L}{R}]_m &   &   \\
		};
		\path [arrs]
		  (m-2-2) edge [bend right] node [swap] {$p$} (m-1-1)
		  (m-2-2) edge node [swap] {$p'$} (m-2-1)
		  (m-2-2) edge [bend left] node {$\widehat p$} (m-3-1)
		  (m-2-1) edge node {$s$} (m-1-1)
		  (m-2-1) edge node [swap] {$s'$} (m-3-1)
		  (m-2-2) edge node {$q$} (m-2-3);
		\end{tikzpicture}
	\end{equation}
	Then, for the pushout injection $i : G \rightarrow G +_{p,q} H$, if $i m$ is mono, the following is an isomorphism:
	\[ (G[\rwsubst{L}{R}]_m) +_{\widehat p,q} H \cong
	   (G +_{p,q} H)[\rwsubst{L}{R}]_{i m} \]
\end{theorem}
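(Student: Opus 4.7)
The plan is to reduce the theorem to two previously established facts: Theorem~\ref{thm:plugging-and-subtraction} (pushout complements commute with pushouts) and the associativity of pushouts. Concretely, both sides of the desired isomorphism will be realised as the colimit of a single ``zig-zag'' diagram, computed in two different orders.

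First, I would verify that $im$ is an $S$-adhesive matching and identify the pushout complement. Applying Theorem~\ref{thm:plugging-and-subtraction} to the left square of the DPO (using the span $(p,q)$ to push out $G$) yields an isomorphism
\[
(G +_{p,q} H) -_{b_1, im} L \;\cong\; (G -_{b_1,m} L) +_{p',q} H,
\]
and equation~(\ref{dia:compatible-with-coboundary}) tells us that this isomorphism commutes with the coboundary maps: the coboundary $c'$ of $L$ in $G +_{p,q} H$ corresponds, along the iso, to the composite $B \xrightarrow{c} G -_{b_1,m} L \xrightarrow{i'} (G -_{b_1,m} L) +_{p',q} H$. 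This simultaneously produces the required $S$-adhesive pushout complement for the matching $im$.

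Next, I would exploit associativity of pushouts on the following diagram:
\[
R \xleftarrow{\;b_2\;} B \xrightarrow{\;c\;} G -_{b_1,m} L \xleftarrow{\;p'\;} P \xrightarrow{\;q\;} H.
\]
Computing its colimit by pushing out the left pair first yields $G[\rwsubst{L}{R}]_m$, equipped with the map $\widehat p = s' p' : P \to G[\rwsubst{L}{R}]_m$ from diagram~(\ref{dia:compatible}); pushing out again along $(\widehat p, q)$ gives $(G[\rwsubst{L}{R}]_m) +_{\widehat p, q} H$. Computing the colimit in the other order, by pushing out the right pair first, gives $(G -_{b_1,m} L) +_{p',q} H$ equipped with $i' c : B \to (G -_{b_1,m} L) +_{p',q} H$; pushing out this span with $b_2 : B \to R$ produces the second description.

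Finally, I would combine the two steps: under the iso of the first step (which commutes with coboundaries), the span $B \xrightarrow{b_2} R$, $B \xrightarrow{i' c} (G -_{b_1,m} L) +_{p',q} H$ is identified with $B \xrightarrow{b_2} R$, $B \xrightarrow{c'} (G +_{p,q} H) -_{b_1, im} L$, whose pushout is by definition $(G +_{p,q} H)[\rwsubst{L}{R}]_{im}$. Chaining the two isomorphisms yields the desired one. The main obstacle is bookkeeping: carefully tracking that the coboundary/compatibility data produced by diagram~(\ref{dia:compatible}) is precisely what is needed to invoke associativity, and that each intermediate pushout is indeed $S$-adhesive (which follows because $S$-adhesive pushouts are preserved pushouts in $\mathcal A$, where associativity and pasting are standard). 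Uniqueness of $S$-adhesive pushout complements (Lemma~\ref{lem:adhesive-complement}) ensures that the two routes yield genuinely the same object up to a canonical isomorphism, rather than merely two constructions that happen to agree in $\mathcal A$.
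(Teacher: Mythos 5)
Your proposal is correct and follows essentially the same route as the paper: both reduce the claim to Theorem~\ref{thm:plugging-and-subtraction} together with associativity of pushouts, using the compatibility diagram~(\ref{dia:compatible}) and the fact that the resulting isomorphism commutes with the coboundary maps. The only cosmetic difference is that the paper invokes Theorem~\ref{thm:plugging-and-subtraction} a second time for the right-hand DPO square (after identifying $G -_{b_1,m} L$ with $(G[\rwsubst{L}{R}]_m) -_{b_2,m'} R$ via uniqueness of pushout complements), whereas you handle that square by pasting pushouts directly in the zig-zag; the underlying content is the same.
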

\begin{proof}
	Since pushout complements are unique up to isomorphism, we can choose $(G -_{b_1,m} L)$ to be equal to $((G[\rwsubst{L}{R}]_m) -_{b_2,m'} R)$, for the same coboundary $c$. Then, by two applications of Thm \ref{thm:plugging-and-subtraction}, we can choose $(G -_{b_1,m} L) +_{p',q} H = ((G[\rwsubst{L}{R}]_m) -_{b_2,m'} R) +_{p',q} H$ as the pushout complement of both of the following squares.
	\begin{center}
		\begin{tikzpicture}
		\matrix (m) [cdiag] {
		L           & B                        & R \\
		G +_{p,q} H & (G -_{b_1,m} L) +_{p',q} H &
		     G[\rwsubst{L}{R}] +_{\widehat p, q} H \\
		};
		\path [arrs]
		 (m-1-2) edge (m-1-1)
		 (m-1-2) edge (m-1-3)

		 (m-2-2) edge (m-2-1)
		 (m-2-2) edge (m-2-3)

		 (m-1-1) edge node [swap] {$i m$} (m-2-1)
		 (m-1-2) edge node {$c'$} (m-2-2)
		 (m-1-3) edge (m-2-3);
		\NEbracket{(m-2-1)}
		\NWbracket{(m-2-3)}
		\end{tikzpicture}
	\end{center}
	
	Note that $c'$ becomes the coboundary for both squares because diagram (\ref{dia:compatible-with-coboundary}) commutes. This is then exactly the computation of the rewrite $(G +_{p,q} H)[\rwsubst{L}{R}]_{im}$.
\end{proof}

We shall use these two theorems throughout the paper to show that rewriting is compatible with several notions of composing graphs.

\section{Open-Graphs}
\label{sec:open-graphs}

In this section, we provide a formal definition for the notion of graphs that can contain edges with unconnected-ends, called \emph{open-graphs}. We do this by introducing a special kind of graph with two distinct types of points. It has points that should be considered as ``real'' vertices, and other intermediate points, called edge-points that occur along edges. In this construction, the ``logical'' edges of an open-graph, or wires, can be presented as chains of edge-points, which need not have a vertex at either end. Thus we can define the boundary of an open-graph as the unconnected ends of these wires. This provides the interface by which we connect open-graphs together. We prove several useful properties about the category \catOGraph{} of open-graphs, and show that the inclusion $S : \catOGraph \hookrightarrow \catGraphSlice$ is a selective adhesive functor into the (adhesive) slice category \catGraphSlice.

To fix notation, recall the standard definition for directed graphs as a functor category.

\begin{definition}
  Let $\catGraph$ be the category of graphs. It is defined as the functor category $[\mathbb G, \catSet]$, for $\mathbb G$ defined as:
  \begin{center}
	\cpair{E}{P}{s}{t}
  \end{center}
  $E$ identifies the edges of the graph, and $P$ the points.
  $s$ and $t$ are functions taking an edge to its source and target
  respectively. If $t(e) = p$ then $e$ is called an \emph{in-edge} of
  $p$ and if $s(e) = p$ then $e$ is called an \emph{out-edge} of $p$. 
\end{definition}

Note that our language for graphs differs slightly from the convention, in that we use the term ``point'' rather than ``vertex''. The reason for this will become clear once we introduce a typing on points. The type graph $2_\mathcal{G}$ will be used to distinguish points that should be interpreted as ``logical'' vertices, from the ``dummy''-points that occur along an edge:

\begin{center}
	$2_\mathcal{G} :=$
	\begin{tikzpicture}[mcgraph, node distance=1em and 3em]
    	\node[valg] (p1) {$V$}; 
    	\node[right=of p1,valg] (p2) {$\epsilon$};  
    	\path[->] 
      		(p1) edge [bend right=20] (p2)
      		(p2) edge [bend right=20] (p1);
    	\path[->] 
      		(p2) edge[loop right] node[above] {} ();
    \end{tikzpicture}
\end{center}

A graph, $G$, is said to be typed by $2_\mathcal{G}$ when there is a typing morphism $\tau : G \rightarrow 2_\mathcal{G}$. When a vertex, $p \in G$, is mapped to $V$, i.e. $\tau(p) = V$, we refer to it as a \emph{vertex-point} or simply as a \emph{vertex}. The other points in $G$, those with $\tau(p) = \epsilon$, are called \emph{edge-points}.


\begin{definition}[$\catOGraph$]
 	The category $\catOGraph$ of \emph{open-graphs}, is a subcategory of
  the slice category $\catGraphSlice$. Objects are those of
  $\catGraphSlice$ where each edge-point has at most one in-edge and
  one out-edge. The morphisms of $\catOGraph$ are the same as those in
  $\catGraphSlice$, with the additional restriction that they be full on
  vertices: any edge adjacent to a vertex $f(v)$ must also be in the image of
  $f$.
\end{definition}


This slice construction plays two roles. As well as distinguishing `real' vertices from edge-points, the lack of a self-loop on $V$ ensures that every path between two vertices must have at least one edge-point.

\begin{example}
  A diagrammatic presentation of an open-graph:
\begin{center} 
\begin{tikzpicture}[mcgraph, node distance=3em and 3em]
  \node[ipoint] (p1) {};  
  \node[ipointlabel,below=of p1] {$p_{1}$};  

  \node[right=of p1,ipoint] (p2) {};  
  \node[ipointlabel,below=of p2] {$p_{2}$};  

  \node[right=of p2,andg] (v1) {$v_1$};  

  \node[right=of v1,ipoint] (p3) {};  
  \node[ipointlabel,below=of p3] {$p_{3}$};  


  \node[right=of p3,andg] (v2) {$v_2$};  

  \node[right=of v2,ipoint] (p4) {};  
  \node[ipointlabel,below=of p4] {$p_{4}$};  

  \node[right=of p4,ipoint] (p5) {};  
  \node[ipointlabel,below=of p5] {$p_{5}$};  

  \path[->,elabel] 
  (p1) edge (p2)
  (p2) edge (v1)
  (v1) edge (p3)
  (p3) edge (v2)
  (p5) edge[loop above]  ()
  ;
\end{tikzpicture}
\end{center}
This diagram abbreviates a graph with its morphism to
$2_{\mathcal{G}}$, which can otherwise be drawn in the more verbose
fashion:
\begin{center}
\begin{tikzpicture}[mcgraph, node distance=3em and 3em]
  \node[valg] (p1) {$p_{1}$};  
  \node[right=of p1,valg] (p2) {$p_{2}$};  
  \node[right=of p2,valg] (v1) {$v_1$};  
  \node[right=of v1,valg] (p3) {$p_{3}$};  
  \node[right=of p3,valg] (v2) {$v_2$};  
  \node[right=of v2,valg] (p4) {$p_{4}$};  
  \node[right=of p4,valg] (p5) {$p_5$};  
  \path[->,elabel] 
  (p1) edge (p2)
  (p2) edge (v1)
  (v1) edge (p3)
  (p3) edge (v2)
  (p5) edge[loop above]  ()
  ;

  \node[valg,below=of p2] (pv) {$V$}; 
  \node[right=of pv,valg] (pe) {$\epsilon$};  
  \path[->] 
  (pv) edge [bend right=20] (pe)
  (pe) edge [bend right=20] (pv);
  \path[->] 
  (pe) edge[loop right] node[above] {} ();

  \path[dotted, ->] 
  (p1) edge (pe)
  (p2) edge (pe)
  (p3) edge (pe)
  (p4) edge (pe)
  (p5) edge (pe)
  (v1) edge (pv)
  (v2) edge (pv)
;

\end{tikzpicture}
\end{center}
\noindent where the dotted arrows indicate the type-morphism for
points, and the edge mapping is trivially inferred.
\end{example}

\begin{definitions}[$\catOGraph$ Notation]
  If an edge-point $p \in P_G$ has no in-edges, it is called an
  \emph{input}. We write the set of inputs of $G$ as
  $\In(G)$. Similarly, an edge-point with no out-edges is called an
  \emph{output}, and the set of outputs is written $\Out(G)$. The
  inputs and outputs define an open-graph's \emph{boundary}. If a
  boundary point has no in-edges and no out-edges, (it is both and
  input and output) it is called an \emph{isolated point}. An open
  graph consisting of only isolated points is called a
  \emph{point-graph}.
\end{definitions}

Note that when there is no ambiguity, we shall use $\In(G)$ and $\Out(G)$ to also refer to the point-graph containing only the inputs or outputs of $G$. As graphs, these have natural inclusions into $G$. We now define the \emph{boundary graph of a open-graph} which plays a particularly important role for composition as well as decomposition of open-graphs.

\begin{definition}[Boundary Graph and Boundary Map] \label{def:boundary-map}
  Given an open-graph $G$, its \emph{boundary graph} is the point-graph
  formed from the coproduct of its inputs and outputs: $B := \In(G) +
  \Out(G)$. The \emph{boundary map} of $G$ is the induced map
  $b : B \rightarrow G$ of the inclusions of $\In(G)$ and $\Out(G)$.
	\begin{center}
		\begin{tikzpicture}[-latex]
			\matrix (m) [cdiag] {
		\In(G) &	\In(G) + \Out(G) \cong B & \Out(G)  \\
      & G & \\
			};
			\path [arrs]
				(m-1-1) edge node {$b^i$} (m-1-2)
				(m-1-3) edge node [swap] {$b^o$} (m-1-2)
				(m-1-2) edge [dashed] node {$b$} (m-2-2)
				(m-1-1) edge [right hook-latex] (m-2-2)
				(m-1-3) edge [left hook-latex] (m-2-2);
		\end{tikzpicture}
	\end{center}
  Note that for a boundary map $b$, we refer to the associated coproduct injections as $b^i$ and $b^o$.
\end{definition}

A boundary map identifies all the inputs and outputs of $G$, and is injective except on that isolated points of $G$, where it is 2-to-1.

\begin{example}
  The following illustrates a graph (below) with its boundary graph (above), where the boundary map is indicated by the dotted arrows.
\begin{center}
  \begin{tikzpicture}[mcgraph,node distance=2em and 2em]
  \node[ipoint] (jpx) {};  
  \node[ipointlabel, below=of jpx] (jpxl) {$b_{x}$};
  { [node distance=1.5em and 1.5em]
  
  \node[ivert, right=of jpx] (jv1) {$v_{K}$};  

  \node[ipoint, right=of jv1] (jpy) {};  
  \node[ipointlabel, below=of jpy] (jpyl) {$b_{y}$};  

  \node[node distance=2em and 2em,ipoint, right=of jpy] (jpz) {};  
  \node[ipointlabel, below=of jpz] (jpzl) {$b_{z}$};  

  \path[->] (jpx) edge (jv1)
            (jv1) edge (jpy);
  }
  \begin{pgfonlayer}{background}
  \node[rectangle, fill=black!5, draw=black!30,rounded corners=1ex,
  fit=(jv1) (jpx) (jpxl) (jpy) (jpyl) (jpz) (jpzl)] (J) {};  
  \end{pgfonlayer}

  \node[ipoint, above=of jpx] (bjpx) {};  
  \node[ipointlabel, above=of bjpx] (bjpxl) {$b^i_{x}$};

  \node[ipoint, above=of jpy] (bjpy) {};  
  \node[ipointlabel, above=of bjpy] (bjpyl) {$b^o_{y}$};  

  \node[ipoint, above=of jpz] (bjpzi) {};  
  \node[ipointlabel, above=of bjpzi] (bjpzli) {$b^i_{z}$};  

  { [node distance=1em and 1em]
  \node[ipoint, right=of bjpzi] (bjpzo) {};  
  \node[ipointlabel, above=of bjpzo] (bjpzlo) {$b^o_{z}$};  
  }
  \begin{pgfonlayer}{background}
  \node[rectangle, fill=black!5, draw=black!30,rounded corners=1ex,
  fit=(bjpx) (bjpxl) (bjpy) (bjpyl) (bjpzi) (bjpzli) (bjpzo) (bjpzlo)] (BJ) {};  
  \end{pgfonlayer}

  \path[->, dotted] 
  (bjpx) edge (jpx) 
  (bjpy) edge (jpy) 
  (bjpzi) edge (jpz)
  (bjpzo) edge (jpz)
;
  \end{tikzpicture}
\end{center}
\end{example}

Notice that because each isolated-point is both an input and an output, a boundary graph has two points for each isolated point in its associated graph. It is important to note that a boundary map is mono if and only if its associated graph has no isolated points.

\begin{definition}[Share the same boundary] \label{def:same-boundary}
	Two graphs $G$ and $H$ are said to \emph{share the same boundary},
  $B$, by boundary maps $b_1$ and $b_2$, when $G
  \overset{b_1}{\longleftarrow} B \overset{b_2}{\longrightarrow} H$,
  $\In(L) \cong \In(R)$, $\Out(L) \cong \Out(R)$, and the following
  diagram commutes:
	\begin{center}
		\begin{tikzpicture}
			\matrix (m) [cdiag] {
			  &  \In(L) &   & \In(R)  &   \\
			L &         & B &         & R \\
			  & \Out(L) &   & \Out(R) &   \\
			};
			\path [arrs]
				(m-2-3) edge node [swap] {$l$} (m-2-1)
				(m-2-3) edge node {$r$} (m-2-5)
				(m-1-2) edge [left hook-latex] (m-2-1)
				(m-3-2) edge [left hook-latex] (m-2-1)
				(m-1-4) edge [right hook-latex] (m-2-5)
				(m-3-4) edge [right hook-latex] (m-2-5)
				(m-1-2) edge node {$l^i$} (m-2-3)
				(m-3-2) edge node [swap] {$l^o$} (m-2-3)
				(m-1-4) edge node [swap] {$r^i$} (m-2-3)
				(m-3-4) edge node {$r^o$} (m-2-3)
				
				(m-1-2) edge node {$\sim$} (m-1-4)
				(m-3-2) edge node [swap] {$\sim$} (m-3-4);
		\end{tikzpicture}
	\end{center}
\end{definition}

Notice that for two graphs with no isolated points, this condition means that boundaries of the two graphs are in bijection, and furthermore that bijection sends inputs to inputs and outputs to outputs.

We now show some basic properties of $\catOGraph$. In particular, we develop the properties needed to show that the inclusion of $\catOGraph$ into $\catGraphSlice$ is a selective adhesive functor.



\begin{lemma} \label{lem:mono-injections}
	A map in \catOGraph{} is a monomorphism iff it is injective.
\end{lemma}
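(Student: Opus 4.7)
The forward direction is immediate. Since $\catGraph = [\mathbb{G}, \catSet]$ is a functor category, monos in $\catGraph$ are exactly the componentwise injective maps, and this characterization is inherited by the slice $\catGraphSlice$. The inclusion $\catOGraph \hookrightarrow \catGraphSlice$ is faithful, so any injective map is mono in $\catGraphSlice$ and therefore also mono in $\catOGraph$.

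For the converse, the plan is to run a kernel-pair argument inside $\catOGraph$ itself. Define $K := G \times_H G$ with point set $\{(p,q) : f(p) = f(q)\}$, edge set $\{(a,b) : f(a) = f(b)\}$, componentwise source and target, and typing inherited from $G$; the typing is well-defined because $f(p) = f(q)$ forces $\tau_G(p) = \tau_G(q)$. Let $\pi_1, \pi_2 : K \to G$ be the projections.

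Two verifications place this data inside $\catOGraph$. First, $K$ satisfies the edge-point constraint: for an edge-point $(p,q)$ of $K$, any in-edge $(a,b)$ of it in $K$ has $t(a) = p$ and $t(b) = q$, and since $p$ and $q$ are edge-points in $G$ each admits at most one in-edge in $G$, so $(a,b)$ is uniquely determined; out-edges are treated symmetrically. Second, each $\pi_i$ is a morphism in $\catGraphSlice$ by construction, and it is surjective on both edges and points because for any $x$ in $G$ the pair $(x,x)$ lies in $K$ and maps to $x$ under $\pi_i$; hence the fullness-on-vertices condition is vacuously satisfied.

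The argument then closes: $f\pi_1 = f\pi_2$ by construction, so if $f$ is mono in $\catOGraph$ then $\pi_1 = \pi_2$, which says precisely that $p = q$ whenever $f(p) = f(q)$, and likewise on edges. Thus $f$ is injective on both points and edges. The only step requiring any genuine verification is that $K$ satisfies the edge-point constraint, and this is a direct transfer of the corresponding constraint from $G$; I do not anticipate any real obstacle.
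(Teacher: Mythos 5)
Your proof is correct, but the hard direction (mono implies injective) takes a genuinely different route from the paper's. The paper argues by contradiction with a \emph{minimal} witness: it picks the smallest subgraph $K$ of $G$ meeting the pre-image of a point or edge with several pre-images (so $K$ is a single vertex, an isolated point, or a single edge with its endpoints), obtains two distinct embeddings $i_1, i_2 : K \to G$ with $f i_1 = f i_2$, and then — because these embeddings need not be full on vertices — repairs them by passing to the induced maps $[1_G, i_1], [1_G, i_2] : G + K \to G$ out of the coproduct, which are full on vertices since the $1_G$ component already covers everything. You instead use the kernel pair $G \times_H G$, whose projections are automatically surjective thanks to the diagonal $(x,x)$, and surjectivity gives fullness on vertices for free (this is the same observation the paper makes in Lemma~\ref{lem:surj-full}). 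The two devices play the same role: both arrange for the test pair of parallel arrows to cover all of $G$ so that the fullness condition on morphisms of \catOGraph{} is not an obstruction. Your approach trades the paper's case analysis on the shape of $K$ for the verification that the kernel pair satisfies the edge-point constraint, which you carry out correctly (the typing on $(p,q)$ is forced by $f(p)=f(q)$, and uniqueness of in-/out-edges transfers componentwise); arguably yours is the cleaner argument, since it handles points and edges uniformly and produces injectivity in one stroke rather than ruling out each kind of collision. One cosmetic remark: fullness on vertices for the projections is not ``vacuously'' satisfied but is a consequence of surjectivity on edges; the mathematics is unaffected.
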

\begin{proof}
  To prove the left to right direction of the iff, assume $f : G \rightarrow H$ is mono but not injective. Then there must be some point or edge, $x$ in $H$, that has more than one point or edge in its pre-image. For $K$ the smallest graph in $G$ that contains a point or edge in the pre-image of $x$, there exist two distinct embeddings $i_1, i_2 : K \rightarrow G$ in $\catGraphSlice{}$ such that $f i_1 = f i_2$. Either $K$ is a single-vertex, an  isolated point, or a single edge with two endpoints, thus it is an open-graph, but $i_1$ and $i_2$ are not necessarily full on vertices. However, the induced maps $[1_G, i_1] : G + K \rightarrow G$ and $[1_G, i_2] : G + K \rightarrow G$ are still distinct \emph{and} are full on vertices, where $+$ is the coproduct of $\catGraphSlice{}$ and $1_G$ is the identity on $G$. Thus: 
\[
 f \circ [1_G, i_1] = [f, f i_1] =
 [f, f i_2] = f \circ [1_G, i_2]
\]
This implies that $f$ is not mono in \catOGraph{}, and from this contradiction we get that $f$ is injective. 

The reverse direction of the iff follows from injective morphisms in $\catGraph$ being monos.  
\end{proof}

We will now prove a similar result for surjections. However, \catOGraph{} is quite restrictive on the types of maps and graphs that exist, so not all epimorphisms are surjective. However, all \emph{strong} epimorphisms are. To show this, we first recall the notion of strong epimorphism and prove a simple fact about surjections.

\begin{definition} \label{def:strong-epi}
	A \emph{strong epimorphism} in $\mathcal C$ is an epimorphism $e$ that is left-orthogonal to all monomorphisms in $\mathcal C$. That is, for any commutative square of the following form, with $m$ as a monomorphism:
	\begin{center}
		\begin{tikzpicture}[-latex]
		    \matrix(m)[cdiag]{
		    A & B \\
		    C & D  \\};
		    \path [arrs] (m-1-1) edge [->>] node {$e$} (m-1-2)
		                 (m-2-1) edge [right hook-latex] node [swap] {$m$} (m-2-2)
		                 (m-1-1) edge node [swap] {$f$} (m-2-1)
		                 (m-1-2) edge node {$g$} (m-2-2)
						 (m-1-2) edge [dashed] node [swap] {$d$} (m-2-1);
		\end{tikzpicture}
	\end{center}
there exists a unique diagonal map, $d$, making the diagram commute.
\end{definition}

\begin{lemma}\label{lem:surj-full}
	For the following commutative triangle:
	
	\begin{center}
	\begin{tikzpicture}[-latex]
		\matrix (m) [cdiag] {
		A & B \\
		  & C \\
		};
		\path
			(m-1-1) edge node [auto] {$e$} (m-1-2)
			(m-1-1) edge node [auto,swap] {$f$} (m-2-2)
			(m-1-2) edge node [auto] {$f'$} (m-2-2);
	\end{tikzpicture}
	\end{center}
	where $e$ is a surjection, $f$ is full on vertices iff $f'$ is.
\end{lemma}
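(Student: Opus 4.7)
The proof is a direct diagram chase exploiting that surjectivity of $e$ lets us lift vertices and edges between $A$ and $B$, while the image of a composition $f = f' \circ e$ coincides with the image of $f'$ whenever $e$ is surjective. The plan is to handle the two directions of the biconditional separately, noting that in each case we need surjectivity of $e$ on exactly the sort of data (vertices in one direction, edges in the other) used by the definition of fullness.

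For the forward direction, assume $f$ is full on vertices and take any vertex $v \in B$; we must show every edge adjacent to $f'(v) \in C$ lies in the image of $f'$. Since $e$ is surjective and preserves vertex type (being a morphism over $2_{\mathcal G}$), there exists a vertex $a \in A$ with $e(a) = v$, so that $f(a) = f'(e(a)) = f'(v)$. Fullness of $f$ then gives that every edge adjacent to $f'(v)$ lies in the image of $f$, and since $\mathrm{im}(f) = \mathrm{im}(f' \circ e) \subseteq \mathrm{im}(f')$, we are done.

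For the reverse direction, assume $f'$ is full on vertices and take any vertex $v \in A$; we must show every edge $x$ adjacent to $f(v) \in C$ lies in the image of $f$. The point $e(v) \in B$ is again a vertex, and $f'(e(v)) = f(v)$, so fullness of $f'$ yields some edge $b \in B$ with $f'(b) = x$. Surjectivity of $e$ on edges produces an edge $a \in A$ with $e(a) = b$, whence $f(a) = f'(e(a)) = f'(b) = x$, as required.

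The argument is entirely routine; the only step where one has to be slightly careful is to remember that ``surjection'' in $\catOGraph$ means surjectivity on both points and edges of the underlying typed graph, and that a morphism over $2_{\mathcal G}$ preserves the vertex/edge-point distinction, so the lifts produced by $e$ are of the correct type. There is no real obstacle here — the lemma is essentially a bookkeeping fact that will be used later to argue that surjective factorisations in $\catGraphSlice$ restrict to $\catOGraph$.
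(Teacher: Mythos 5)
Your proof is correct and follows essentially the same route as the paper's: both directions are elementary chases using surjectivity of $e$ to lift vertices and edges, together with the observation that $f[A] = f'[B]$ when $e$ is surjective. Your version merely spells out the lifting steps that the paper compresses into ``all surjections are full on vertices'' and ``precomposing with a surjection does not affect the image.''
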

\begin{proof}
	First, consider the case when $f'$ is full on vertices. All surjections are full on vertices, thus $f' e$ is also full on vertices. Now, suppose $f$ is full on vertices. Note that precomposing with a surjection does not affect the image of a map, so $f[A] = f'e[A] = f'[B]$. So $f'$ is full on vertices.
\end{proof}

\begin{lemma}\label{lem:surj-strong}
	A map in \catOGraph{} is a strong epimorphism iff it is surjective.
\end{lemma}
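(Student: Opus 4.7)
The plan is to prove the two implications using, respectively, a pointwise construction of the lifting diagonal and a standard image-factorisation argument. Both directions rely on Lemmas~\ref{lem:mono-injections} and \ref{lem:surj-full}.

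For the direction surjective $\Rightarrow$ strong epi, I would first observe that any surjection is in particular an epimorphism, so only the orthogonality property needs attention. Given a commutative square with $e$ surjective and $m$ mono (so injective, by Lemma~\ref{lem:mono-injections}), I would define the diagonal $d : B \to C$ pointwise: for each $b \in B$, choose any $a$ with $e(a) = b$ and set $d(b) := f(a)$. Well-definedness follows from $m f = g e$ together with injectivity of $m$: if $e(a) = e(a')$, then $m f(a) = g e(a) = g e(a') = m f(a')$, so $f(a) = f(a')$. That $d$ respects source, target and the typing over $2_{\mathcal G}$ is routine, using surjectivity of $e$ and that $e$, $f$ are morphisms of $\catOGraph$; fullness of $d$ on vertices is delivered by Lemma~\ref{lem:surj-full} applied to the triangle $d \circ e = f$. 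Both triangles commute by construction, and uniqueness of $d$ is immediate from monoicity of $m$.

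For the converse, I would image-factor $e$ as $e = m \circ e'$, where $B' \subseteq B$ is the subgraph consisting of the points and edges in the image of $e$, the map $e' : A \to B'$ is the corestriction of $e$, and $m : B' \hookrightarrow B$ is the inclusion. The first task is to confirm that $B'$ lies in $\catOGraph$ and that both $m$ and $e'$ are morphisms there: the at-most-one-in/out-edge condition on edge-points is inherited by subgraphs, and crucially fullness of $e$ on vertices is exactly what forces the inclusion $m$ to be full on vertices (and similarly for $e'$). Then the square with arrows $e$ on top, $1_B$ on the right, $m$ on the bottom and $e'$ on the left commutes in $\catOGraph$, so strong-epiness of $e$ against the mono $m$ yields a diagonal $d : B \to B'$ with $m d = 1_B$ and $d e = e'$. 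Since $m$ is just the inclusion of $B'$ into $B$, the equation $m d = 1_B$ forces $d(b) = b \in B'$ for every $b \in B$, giving $B' = B$; hence $e$ is surjective.

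I expect the main subtlety to lie in the image-factorisation step: one must take care to check that the fullness hypothesis travels correctly through the factorisation, so that both $m$ and $e'$ land in $\catOGraph$ rather than merely in $\catGraphSlice$. Once that verification is in place—essentially a direct unpacking of the fullness condition in Def.\ of $\catOGraph$—the rest of the argument is formal and uses only the standard fact that a mono which is split epi is an isomorphism.
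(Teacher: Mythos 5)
Your proof is correct and follows essentially the same route as the paper: both directions hinge on Lemma~\ref{lem:surj-full} to get fullness of the diagonal, and on factoring a strong epimorphism through its image to force surjectivity (the paper phrases this as ``strong epis are extremal epis,'' which is exactly the split-mono argument you spell out). The only cosmetic difference is that you construct the lifting diagonal pointwise, whereas the paper imports it from the ambient category \catGraphSlice{}, where surjections are already known to be strong epimorphisms, and then only checks fullness on vertices.
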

\begin{proof}
  We first show that surjections in \catOGraph{} are strong epimorphisms. First note that surjections in \catOGraph{} are strong epimorphisms in \catGraphSlice{}; monos in \catOGraph{} are injections and hence monos in \catGraphSlice{}. Therefore, it suffices to show that the diagonal map $d$ from Def~\ref{def:strong-epi} is full on vertices. This follows from Lem~\ref{lem:surj-full}.
	
  To show that all strong epimorphisms are surjections, we begin by
  noting that strong epimorphisms are, in particular, extremal
  epimorphisms. That is, given an epimorphism $e$, such that for any
  factorisation $e = m f$, where $m$ is mono, then $m$ must be an
  isomorphism. Suppose some map $e$ does not have this property, then
  it factors as $e = m f$ for some monomorphism that is not an
  isomorphism. Then $m$ must not be surjective, so $e$ must also not
  be. From this contradiction, strong epimorphisms in \catOGraph{} are
  all surjections.
\end{proof}

\begin{lemma}\label{lem:strong-epi-mono-factorisations}
	\catOGraph{} has unique strong epi-mono factorisations.
\end{lemma}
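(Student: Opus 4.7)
My plan is to construct the factorisation via an image in the ambient slice category $\catGraphSlice{}$ and then verify that the intermediate object and both legs live in $\catOGraph{}$. Given a morphism $f : G \to H$ in $\catOGraph{}$, I would form the image $I := f[G]$ using the pointwise image factorisation available in $\catGraph{}$ (a presheaf category), which lifts to the slice by inheriting the typing morphism from $H$. This yields $f = m \circ e$ with $e : G \twoheadrightarrow I$ surjective and $m : I \hookrightarrow H$ injective in $\catGraphSlice{}$. By Lemmas~\ref{lem:mono-injections} and~\ref{lem:surj-strong}, once we know that $e$ and $m$ are morphisms of $\catOGraph{}$, they will automatically be a strong epimorphism and a monomorphism respectively, so the remaining work is entirely about landing in the subcategory.

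I would next check three things. First, $I$ is an open-graph: since $m$ is injective and $H$ is an open-graph, any edge-point of $I$ has at most as many in- and out-edges as its image in $H$, so the ``at most one in-edge and one out-edge at every edge-point'' condition is preserved. Second, $e$ is full on vertices: this is immediate from surjectivity, since every edge of $I$ is already of the form $e(x)$ for some edge $x$ of $G$. Third, $m$ is full on vertices: given a vertex $v \in I$ and an edge $x$ incident to $m(v)$ in $H$, choose $v' \in e^{-1}(v)$; then $m(v) = f(v')$, and fullness of $f$ on vertices forces $x$ together with its other endpoint into the image of $f$, hence into the image of $m$.

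For uniqueness, suppose $f = m_1 \circ e_1 = m_2 \circ e_2$ are two strong epi-mono factorisations through objects $I_1$ and $I_2$. Orthogonality of $e_1$ and $m_2$ (Definition~\ref{def:strong-epi}), applied to the commuting square $m_1 \circ e_1 = m_2 \circ e_2$, produces a unique diagonal $d : I_1 \to I_2$ satisfying $d \circ e_1 = e_2$ and $m_2 \circ d = m_1$. Swapping the roles of the two factorisations yields $d' : I_2 \to I_1$, and the uniqueness clause of the diagonal-lift property (applied to the trivial squares with identities as diagonals) forces $d' \circ d = 1_{I_1}$ and $d \circ d' = 1_{I_2}$, so $d$ is the desired isomorphism between the two factorisations.

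The main technical obstacle is verifying that the monic leg $m$ remains full on vertices after image factorisation; all other properties either transfer transparently from $\catGraphSlice{}$ or fall out of surjectivity. Once that single point is handled, the rest of the existence argument is a direct reading-off of the inherited factorisation, and uniqueness is a standard orthogonality argument.
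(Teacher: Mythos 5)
Your proposal is correct and follows essentially the same route as the paper: factor $f$ through its image in the ambient category, observe that the surjective leg is automatically full on vertices while the monic leg inherits fullness from $f$ (the content of Lemma~\ref{lem:surj-full}), and then invoke Lemmas~\ref{lem:mono-injections} and~\ref{lem:surj-strong} to identify these legs as a strong epi and a mono. The only differences are that you spell out two points the paper leaves implicit — that the image object is itself an open-graph, and the standard orthogonality argument for uniqueness — both of which are handled correctly.
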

\begin{proof}
	If a map $f$ factors as $m e$, where $e$ is a strong epimorphisms and $m$ is a monomorphism, this factorisation is automatically unique. This factorisation exists because any map $f$ factors through its image:
	\[ A \overset{f_e}{\twoheadrightarrow} f[A]
	     \overset{f_m}{\hookrightarrow} B \]
	The map $f_e$ of $f$ onto its image is full on vertices because it is surjective, and the embedding $f_m$ of the image of $f$ in $B$ is full on vertices precisely when $f$ is. By Lemmas \ref{lem:mono-injections} and \ref{lem:surj-strong}, all strong epi-mono factorisations are of this form.
\end{proof}

\begin{lemma}\label{lem:graphic-mono-iso}
	The embedding functor $S : \catOGraph \hookrightarrow
  \catGraphSlice$ preserves and reflects monomorphisms and strong epimorphisms, and it creates isomorphisms.
\end{lemma}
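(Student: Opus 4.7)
The plan is to reduce each claim to the characterisations of monos and strong epis already established in Lemmas \ref{lem:mono-injections} and \ref{lem:surj-strong}, together with the fact that \catGraphSlice{} is a slice over a presheaf topos, so its monos and epis admit the same underlying description as injections and surjections respectively.

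First, for monomorphisms, I would argue as follows. By Lemma \ref{lem:mono-injections}, a map $f$ in \catOGraph{} is mono iff it is injective on points and edges. In \catGraphSlice{}, which is a slice over the functor category $[\mathbb{G},\catSet]$, monomorphisms are computed pointwise and are again exactly the injective maps. Hence $f$ is mono in \catOGraph{} iff $Sf$ is injective iff $Sf$ is mono in \catGraphSlice{}, giving both preservation and reflection.

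Next, for strong epimorphisms, I would proceed along the same lines but invoking Lemma \ref{lem:surj-strong}: $f$ is a strong epi in \catOGraph{} iff it is surjective. The slice \catGraphSlice{} lies over a presheaf topos, and in any topos every epimorphism is strong and epimorphisms are precisely the surjections. Therefore $Sf$ is surjective iff $Sf$ is a strong epi in \catGraphSlice{}. Combined with the preceding observation that surjectivity of $f$ coincides with surjectivity of $Sf$, this yields the biconditional.

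Finally, for creation of isomorphisms, suppose $G \in \catOGraph{}$ and $\phi : SG \rightarrow H$ is an isomorphism in \catGraphSlice{}. Since $\phi$ is a bijection of the underlying sets of points and edges, $H$ inherits the defining conditions of an open-graph from $SG$: each edge-point of $H$ has at most one in-edge and one out-edge, because these properties transport along isomorphisms. Hence $H = SG'$ for a (unique) $G' \in \catOGraph{}$. The map $\phi$ is surjective, hence full on vertices, and so is a morphism of \catOGraph{} from $G$ to $G'$; similarly its inverse $\phi^{-1}$, being surjective, is full on vertices and is a morphism of \catOGraph{}. Thus $\phi$ lifts to a unique iso in \catOGraph{}, which is exactly creation of isomorphisms.

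No step presents a real obstacle; the only point that requires care is verifying that the \catOGraph{}-specific conditions (the at-most-one in-/out-edge restriction on edge-points, and fullness on vertices of morphisms) are stable under isomorphism in the ambient slice, which is what allows the creation property to go through without needing to invoke any further structural results.
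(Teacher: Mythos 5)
Your proposal is correct and follows essentially the same route as the paper: monos and strong epis are handled by reducing to the injectivity/surjectivity characterisations of Lemmas \ref{lem:mono-injections} and \ref{lem:surj-strong} together with the standard description of monos and epis in the (presheaf-topos) slice \catGraphSlice{}, and creation of isomorphisms follows from the invariance of the open-graph condition under isomorphism plus the fact that isomorphisms are full on vertices. You have merely spelled out the details that the paper's one-line proof leaves implicit.
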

\begin{proof}
  Monos and strong epimorphisms follow from Lemmas \ref{lem:mono-injections} and \ref{lem:surj-strong}. Creation of isomorphisms follows from the fact that the definition of open-graph is invariant under isomorphism and all isomorphisms are full on vertices.
\end{proof}

\begin{lemma}\label{lem:bc-spans}
	The embedding functor $S$ reflects colimits.
\end{lemma}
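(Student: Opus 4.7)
The plan is to unfold the definition of ``reflects colimits'' directly: take a diagram $D : \mathcal{J} \to \catOGraph$ and a cocone $(c_j : D(j) \to C)_{j}$ over $D$ such that $(Sc_j)_j$ is a colimiting cocone in $\catGraphSlice$, and show that $(c_j)_j$ is colimiting in $\catOGraph$. Given any competing cocone $(f_j : D(j) \to X)_j$ in $\catOGraph$, applying $S$ produces a cocone in $\catGraphSlice$, so the universal property there yields a unique mediating morphism $u : SC \to SX$ with $u \circ Sc_j = Sf_j$ for every $j$. The two things to verify are (i) that $u$ is the image under $S$ of an arrow in $\catOGraph$, i.e.\ that $u$ is full on vertices, and (ii) uniqueness of such a mediating arrow in $\catOGraph$.

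Uniqueness (ii) is immediate: the inclusion $S$ is faithful (it is the inclusion of a subcategory), so any $u' : C \to X$ in $\catOGraph$ satisfying $u' \circ c_j = f_j$ must have $S u' = u$ by the universal property in $\catGraphSlice$, hence $u' = u$. So the crux is (i).

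For (i), the key input is that colimits in $\catGraphSlice$ are computed componentwise on edges and points (since $\catGraph$ is a presheaf category and slicing is pointwise on colimits), so the coproduct map $\coprod_j SD(j) \twoheadrightarrow SC$ is surjective. In particular, every vertex $v \in C$ is of the form $c_j(v')$ for some $j$ and some vertex $v' \in D(j)$. Now suppose $e$ is any edge in $X$ adjacent to $u(v)$. Since $u(v) = u(c_j(v')) = f_j(v')$ and $f_j$ is full on vertices (it is an arrow of $\catOGraph$), there is an edge $e' \in D(j)$ adjacent to $v'$ with $f_j(e') = e$. Then $e = f_j(e') = u(c_j(e'))$, so $e$ lies in the image of $u$. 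Hence $u$ is full on vertices, and so defines an arrow $u : C \to X$ in $\catOGraph$ whose image under $S$ is the unique mediator, as required.

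The only mildly subtle point, and the main obstacle, is justifying that every vertex of the colimit object $C$ actually arises as $c_j(v')$ for some vertex $v'$ in the diagram. This uses the pointwise description of colimits in the presheaf category $\catGraph = [\mathbb{G}, \catSet]$ together with the fact that slice colimits in $\catGraphSlice$ are computed in $\catGraph$. Once that is in hand, the fullness-on-vertices of each $f_j$ transfers to fullness-on-vertices of $u$ exactly as above, and the lemma follows.
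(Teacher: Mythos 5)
Your proof is correct, and it takes a genuinely more uniform route than the paper's. The paper argues only for two shapes of colimit separately: for coequalisers it observes that the mediating map $u$ postcomposes a surjection ($Sq$ is a regular, hence strong, epimorphism, and by Lemma \ref{lem:surj-strong} so is $q$) and invokes Lemma \ref{lem:surj-full} to transfer fullness on vertices from $q'$ to $u$; for coproducts it notes that the image of $u$ is the union of the images of the $f_j$. You instead run a single argument for an arbitrary diagram: since colimits in $\catGraphSlice$ are created from the pointwise colimits of the presheaf category $\catGraph = [\mathbb{G}, \catSet]$, the legs $c_j$ are jointly surjective on points, so every vertex of $C$ is $c_j(v')$ for some point $v'$ of $D(j)$ (and $v'$ is itself a vertex because morphisms commute with the typing into $2_\mathcal{G}$); fullness on vertices of $u$ then falls out of fullness of the $f_j$ exactly as you say, and uniqueness is faithfulness. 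This buys two things: it avoids the strong-epimorphism machinery entirely, and it establishes reflection of \emph{all} colimits in one stroke, whereas the paper's proof, read literally, treats only coequalisers and set-indexed coproducts and leaves implicit the step needed to deduce the general case from those. One cosmetic remark: the edge $e'$ with $f_j(e') = e$ supplied by fullness need not itself be adjacent to $v'$ (only its image is forced to be adjacent to $f_j(v')$), but your argument never uses that, so nothing breaks.
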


\begin{proof}
	Let the following diagram be a coequaliser in $\catGraphSlice$:
	\begin{center}
		\begin{tikzpicture}
			\matrix (m) [cdiag] {
			  S A & S B & S Q \\
			};
			\path [arrs]
			   (m-1-1.20) edge node {$S f$} (m-1-2.160)
			   (m-1-1.-20) edge node [swap] {$S g$} (m-1-2.-160)
			   (m-1-2) edge node {$S q$} (m-1-3);
		\end{tikzpicture}
	\end{center}
	
	Then, because $S$ is faithful, $q f = q g$. Suppose there is some $q'$ in \catOGraph{} such that $q' f = q' g$, then there exists unique $u$ in \catGraphSlice{} making this diagram commute:
	
	\begin{center}
		\begin{tikzpicture}
			\matrix (m) [cdiag] {
			  S A & S B & S Q  \\
			      &     & S Q' \\
			};
			\path [arrs]
			   (m-1-1.20) edge node {$S f$} (m-1-2.160)
			   (m-1-1.-20) edge node [swap] {$S g$} (m-1-2.-160)
			   (m-1-2) edge node {$S q$} (m-1-3)
			   (m-1-2) edge node [swap] {$S q'$} (m-2-3)
			   (m-1-3) edge [dashed] node {$u$} (m-2-3);
		\end{tikzpicture}
	\end{center}
	
	$S q$ is a regular epimorphism in \catGraphSlice{}, so in particular it is a strong epimorphism. By Lem \ref{lem:graphic-mono-iso}, $q$ is a strong epimorphism in \catOGraph{}. Thus, by Lemma \ref{lem:surj-full} $u$ is full on vertices, or equivalently, $u = S u'$ for some (unique) $u'$ in \catOGraph{}. So, by faithfulness of $S$, the following diagram commutes:
	\begin{center}
		\begin{tikzpicture}
			\matrix (m) [cdiag] {
			  A & B & Q  \\
			    &   & Q' \\
			};
			\path [arrs]
			   (m-1-1.20) edge node {$f$} (m-1-2.160)
			   (m-1-1.-20) edge node [swap] {$g$} (m-1-2.-160)
			   (m-1-2) edge node {$q$} (m-1-3)
			   (m-1-2) edge node [swap] {$q'$} (m-2-3)
			   (m-1-3) edge [dashed] node {$u'$} (m-2-3);
		\end{tikzpicture}
	\end{center}
	
	Let $SC$ be a set-indexed coproduct in \catGraphSlice{}, with injections $S i_j : S G_j \rightarrow S C$. Then, for any arrows $f_j : G_j \rightarrow H$, there exists a unique $u$ making the following diagram commute for all $j$:
	\begin{center}
		\begin{tikzpicture}
			\matrix (m) [cdiag] {
			S G_j & SC \\
			      & S H  \\
			};
			\path [arrs]
			  (m-1-1) edge node {$S i_j$} (m-1-2)
			  (m-1-1) edge node [swap] {$S f_j$} (m-2-2)
			  (m-1-2) edge [dashed] node {$u$} (m-2-2);
		\end{tikzpicture}
	\end{center}
	
	The image of $u$ is the union of the images of all the maps $f_j$, so it is full on vertices. Therefore $C \cong \coprod G_j$ is a coproduct in \catOGraph{}.
\end{proof}

\begin{theorem}\label{thm:ograph-selective-adhesive}
	The embedding functor $S : \catOGraph \hookrightarrow \catGraphSlice$ is a selective adhesive functor.
\end{theorem}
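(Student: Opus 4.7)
The proof should essentially be a short assembly of the preceding lemmas, checking off the four clauses of Definition~\ref{def:adhesive-functor} one by one. The plan is to verify faithfulness, preservation of monomorphisms, creation of isomorphisms, and reflection of pushouts, citing the infrastructure already developed.

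For faithfulness, I would simply observe that $S$ is the inclusion of a subcategory: morphisms of $\catOGraph$ are set-theoretically the same as their images in $\catGraphSlice$ (with the extra constraint of being full on vertices), so if $S f = S g$ then $f = g$. For preservation of monomorphisms, I would invoke Lemma~\ref{lem:graphic-mono-iso}, which states precisely that $S$ preserves (and reflects) monomorphisms. Creation of isomorphisms is also handled by Lemma~\ref{lem:graphic-mono-iso}: any isomorphism in $\catGraphSlice$ between objects of $\catOGraph$ is automatically full on vertices, hence is the image of a unique isomorphism of $\catOGraph$.

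The only clause requiring a small extra remark is reflection of pushouts. Lemma~\ref{lem:bc-spans} says that $S$ reflects colimits; since pushouts are a particular kind of colimit (the colimit of a span diagram), reflection of pushouts is immediate. Thus all four conditions of Definition~\ref{def:adhesive-functor} are satisfied and $S$ is selective adhesive.

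No step here is a real obstacle, since the substantive work---showing that monos in $\catOGraph$ are injections, that strong epis are surjections, that fullness-on-vertices is preserved along surjections, and that colimits in $\catGraphSlice$ restrict to $\catOGraph$ whenever the universal map from the colimit is full on vertices---was already done in Lemmas~\ref{lem:mono-injections}--\ref{lem:bc-spans}. The proof of the theorem itself is therefore just a two- or three-line bookkeeping argument tying these results to the four conditions of Definition~\ref{def:adhesive-functor}.
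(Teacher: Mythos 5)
Your proposal is correct and matches the paper's intent exactly: the paper states this theorem without an explicit proof, evidently regarding it as immediate from Lemmas~\ref{lem:mono-injections}--\ref{lem:bc-spans}, and your four-clause check (faithfulness from being a subcategory inclusion, monos and isomorphisms from Lemma~\ref{lem:graphic-mono-iso}, pushouts as a special case of the colimit reflection in Lemma~\ref{lem:bc-spans}) is precisely the assembly the authors leave implicit. No gaps.
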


\section{Composition and Decomposition for Open Graphs}
\label{sec:compose-graphs}

In this section, we show how open-graphs can be composed and decomposed using the embedding functor $\catOGraph \rightarrow \catGraphSlice$ which we call $S$. In particular, using the fact that $S$ is a selective adhesive functor, we introduce definitions for subtracting one graph from another (by pushout complements) and for connecting open-graphs along their boundary.

We will first define the spans of graphs that preserve open-graphs under pushout. The crucial features of such pushouts are:

\begin{itemize} 
\item if two edges are identified, then the whole paths of edge-points they are on are also identified: never identify only a middle section of one edge with a middle section of another edge;
\item outputs should only be connected to inputs: never connect the output of an edge to the output of another edge, and likewise with inputs.
\end{itemize}

This idea is formalised by the notion of \emph{boundary-coherent spans}.

\begin{definition}[Boundary Coherent] \label{def:boundary-coherent}
	A span $H_1 \overset{f}{\longleftarrow} G \overset{g}{\longrightarrow} H_2$ is
  called \emph{boundary coherent} when $f$ and $g$ are monos and:
	\begin{enumerate}
		\item for all $p \in \In(G)$ at most one of $f(p)$ and $g(p)$ is an
      input;
		\item for all $p \in \Out(G)$ at most one of $f(p)$ and $g(p)$ is an
      output. 
	\end{enumerate}
  A parallel pair of arrows $f,g : G \rightarrow H$, is called a \emph{boundary coherent pair} when the span $H \overset{f}{\longleftarrow} G \overset{g}{\longrightarrow} H$ is boundary coherent. 
 \end{definition}

\begin{theorem}
Boundary-coherent spans are $S$-adhesive spans.
\end{theorem}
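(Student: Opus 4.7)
The plan is to exploit Thm~\ref{thm:ograph-selective-adhesive}: because $S$ reflects pushouts, it suffices to compute the pushout of $SH_1 \leftarrow SG \to SH_2$ in the cocomplete slice $\catGraphSlice$ and show that the resulting object $P$ and coprojections $i_1, i_2$ all lie in the image of the inclusion $S$. Concretely, I must check that $P$ satisfies the open-graph condition (each edge-point has in-degree and out-degree at most one) and that $i_1, i_2$ are full on vertices; the statement that this yields a pushout in $\catOGraph$ then follows immediately by reflection.

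The substantive step is the degree bound on $P$. Because $Sf, Sg$ are monos by Lem~\ref{lem:graphic-mono-iso}, the equivalence classes of points in $P$ are either singletons (inheriting degrees from $H_1$ or $H_2$) or identified pairs $\{f(p), g(p)\}$ for a unique $p \in G$. I would case-split on the local structure of $p$: if $p$ has an in-edge $\hat e$ in $G$, then $f(p)$ and $g(p)$ each have exactly one in-edge, namely $f(\hat e)$ and $g(\hat e)$, because $H_1, H_2$ are open-graphs; these are identified in $P$, so $[p]$ acquires a single in-edge. If instead $p \in \In(G)$, no in-edges of $f(p)$ or $g(p)$ arise from the image of the span, so the boundary-coherent condition on inputs is precisely what is needed to bound the sum of their in-degrees by one. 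The out-edge analysis is symmetric using $\Out(G)$, and points isolated in $G$ are handled by invoking both clauses of boundary-coherence at once.

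For fullness on vertices, consider a vertex $v$ of $H_1$. If $v = f(v')$ for some $v' \in G$, then $v'$ is also a vertex (typings are preserved), and full-on-vertices of $f$ forces every edge of $H_1$ adjacent to $v$ to lie in $f[G]$; after identification, every edge adjacent to $[v]$ in $P$ lies in $i_1[H_1]$. If $v \notin f[G]$, then no edge adjacent to $v$ in $H_1$ can lie in $f[G]$ either (an endpoint in $f[G]$ would force $v \in f[G]$), so $[v] = \{v\}$ has all its adjacencies drawn from $H_1$ alone and the conclusion is the same. The argument for $i_2$ is symmetric. For the universal property, given a cocone of $\catOGraph$-morphisms $h_1, h_2$ into $Q$, the unique $\catGraphSlice$-mediator $h : P \to Q$ is full on vertices because every vertex of $P$ is $i_1(v)$ or $i_2(w)$ for some vertex of $H_1$ or $H_2$, and the edges adjacent to $h(i_j(\cdot))$ in $Q$ are forced to lie in $h_j[H_j] \subseteq h[P]$ by full-on-vertices of $h_j$.

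The main obstacle I anticipate is the case analysis for the degree bound at identified edge-points: it requires carefully combining full-on-vertices of $f, g$ (which controls edge-points of $G$ that are adjacent to a vertex) with the boundary-coherent condition (which controls edge-points sitting on the boundary of $G$), and ensuring that isolated points of $G$ are bounded by applying both clauses of boundary-coherence in parallel. Once this local analysis is in place, the remaining steps are straightforward transfers of structure along the embedding $S$.
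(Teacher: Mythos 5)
Your proposal is correct and follows essentially the same route as the paper: compute the pushout in $\catGraphSlice$ (which, being a pushout of monos, is a union over the shared subgraph), use boundary-coherence to rule out an edge-point acquiring two in-edges or two out-edges, check fullness on vertices of the injections, and conclude by reflection of pushouts along $S$. Your explicit case analysis on identified point-classes and your separate verification of the universal property are just more detailed renderings of the paper's intersection/union contradiction argument (the latter being redundant once reflection of pushouts is invoked).
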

\begin{proof}
Let $A \overset{f}{\longleftarrow} B \overset{g}{\longrightarrow} C$ be a boundary coherent span. Then the following is a pushout in \catGraphSlice{}.

\begin{center}
	\posquare{S A}{S B}{S C}{D}{S f}{p_2}{S g}{p_1}
\end{center}

Since $S$ reflects pushouts, it suffices to show that $D$, $i_1$ and $i_2$ are in the image of $S$. Since $S$ preserves monos, $S f$ and $S g$ are mono. A pushout of monos in \catGraphSlice{} is (up to isomorphism) just a union. So, without loss of generality, we can let $S A = S B \cap S C$ and $D = S B \cup S C$, and the two inclusions of the intersection form a boundary-coherent span. Thus we can rewrite the above pushout as follows.
\begin{center}
	\monoposquare{S B \cap S C}{S B}{S C}{S B \cup S C}{S f}{i_2}{S g}{i_1}
\end{center}
Suppose an edge point $p$ in $S B \cup S C$ has two out-edges. Then one must be in $S B$ and the other in $S C$. Thus neither are in the intersection, so $p$ is an output in $S B \cap S C$. But it is not an output in $S B$ or $S C$, thus contradicting the boundary coherence assumption. A contradiction follows similarly when $p$ has two in-edges, so $S B \cup S C$ is an open-graph. Moreover, $i_1$ and $i_2$ are full on vertices because $S f$ and $S g$ are, so $(S f, S g)$ is an $S$-adhesive span.
\end{proof}

This provides boundary-coherent spans with nice properties for pushouts as reflected by the selective adhesive functor $S$. These pushouts, which we call \emph{mergings}, will play a central role in our construction of rewriting.

\begin{definition}[Merging]
  Given a boundary-coherent span of monos $G_1
  \overset{m_1}{\hookleftarrow} K \overset{m_2}{\hookrightarrow} G_2$,
  we use the notation, $M := G_1 \mergew{m_1,m_2} G_2$, for the
  pushout of the span, which we call the \emph{merging} of
  $G_1$ and $G_2$ on $K$ by $m_1$ and $m_2$: \begin{center}
  \begin{tikzpicture}[text height=1.5ex]
	  \node[] (top) {$K$}; 
	  \node[below left=of top] (left) {$G_1$}; 
	  \node[below right=of top] (right) {$G_2$}; 
	  \node[below=of top] (mid) {}; 
	  \node[below=of mid] (bot) {$M$};
	  \path[->] (top) edge node [above left,elabel] {$m_1$} (left)
	            (top) edge node [above right,elabel] {$m_2$} (right)
	            (left) edge node [below left,elabel] {$m_1'$} (bot)
	            (right) edge node [below right,elabel] {$m_2'$} (bot)
	            ;
 
	  { [node distance=0em and 0em]
	     \node[above=of bot, circle, inner sep=2pt] (pushout) {};
	     \draw[line width=1pt] (pushout.west) -- (pushout.north) -- (pushout.east); 
	  }
	\end{tikzpicture}
\end{center}
\noindent This makes $M$ the smallest graph containing $G_1$ and $G_2$
with a single copy of the shared sub-graph $K$, as identified by $m_1$
and $m_2$. 
\end{definition}

\begin{example}  \label{ex4:merge} 
An illustration of merging graphs: 
\begin{center}
  \begin{tikzpicture}[mcgraph,node distance=5em and 7em]
  \node[ipoint] (jpx) {};  
  \node[ipointlabel, below=of jpx] (jpxl) {$b_{x}$};
  { [node distance=0.7em and 0.7em]
  
  \node[ivert, above right=of jpx] (jv1) {$v_{K}$};  

  \node[ipoint, below right=of jv1] (jpy) {};  
  \node[ipointlabel, below=of jpy] (jpyl) {$b_{y}$};  

  \node[node distance=2em and 2em,ipoint, below=of jv1] (jpz) {};  
  \node[ipointlabel, below=of jpz] (jpzl) {$b_{z}$};  

  \path[->] (jpx) edge (jv1)
            (jv1) edge (jpy);
  }
  \begin{pgfonlayer}{background}
  \node[rectangle, fill=black!5, draw=black!30,rounded corners=1ex,
  fit=(jv1) (jpx) (jpxl) (jpy) (jpyl) (jpz) (jpzl)] (J) {};  
  \end{pgfonlayer}

  \node[ipoint, left=of J.south west] (lpy) {};
  \node[ipointlabel, below=of lpy] (lpyl) {$b_y$};  
  { [node distance=0.7em and 0.7em]
  \node[ivert, above left=of lpy] (lvk) {$v_{K}$};  

  \node[ipoint,below left=of lvk] (lpx) {};  
  \node[ipointlabel,above left=of lpx] (lpxl) {$b_x$};  

  \node[ivert, inner sep=0.7em, below left=of lpx] (lv1) {$v_{1}$}; 

  \node[ipoint,below=of lv1] (lp1) {};  
  \node[ipointlabel,below=of lp1] (lp1l) {$p_1$};  

  \node[ipoint,below right=of lv1] (lpz) {};  
  \node[ipointlabel,below=of lpz] (lpzl) {$b_z$};  
  \path[->] 
   (lvk) edge (lpy)
   (lp1) edge (lv1)
   (lpx) edge (lvk)
   (lv1) edge (lpx)
   (lv1) edge (lpz);
  }
  \begin{pgfonlayer}{background}
  \node[rectangle, fill=black!5, draw=black!30,rounded corners=1ex,
  fit=(lvk) (lv1) (lpx) (lpxl) (lpy) (lpyl) (lpz) (lpzl)] (L) {};  
  \end{pgfonlayer}

  \node[ipoint, right=of J.south east] (rpx) {};  
  \node[ipointlabel, below=of rpx] (rpxl) {${b}_{x}$};  
  { [node distance=0.7em and 0.7em]
 \node[ivert, above right=of rpx] (rvk) {$v_{K}$};  

  \node[ipoint,below right=of rvk] (rpy) {};  
  \node[ipointlabel,above right=of rpy] (rpyl) {$b_y$};  

  \node[ivert, inner sep=0.7em, below right=of rpy] (rv2) {$v_2$}; 

  \node[ipoint,below=of rv2] (rp2) {};  
  \node[ipointlabel,below=of rp2] (rp2l) {$p_2$};  

  \node[ipoint, below left=of rv2] (rpz) {}; 
  \node[ipointlabel, below=of rpz] (rpzl) {$b_{z}$};  
  \path[->] (rpz) edge (rv2)
            (rv2) edge (rp2)
            (rpy) edge (rv2)
            (rpx) edge (rvk)
            (rvk) edge (rpy);
  }
  \begin{pgfonlayer}{background}
  \node[rectangle, fill=black!5, draw=black!30,rounded corners=1ex, fit=(rv2) (rpx) (rpzl) (rvk)] (R) {};  
  \end{pgfonlayer}

  \node[below=of J] (mc) {};  
  { [node distance=0.7em and 0.7em]
  \node[ivert, at=(mc)] (mvk) {$v_{K}$};  

  \node[ipoint,below left=of mvk] (mpx) {};  
  \node[ipointlabel,below=of mpx] (mpxl) {$b_x$};  

  \node[ipoint,below right=of mvk] (mpy) {};  
  \node[ipointlabel,below=of mpy] (mpyl) {$b_y$};  

  \node[ivert, inner sep=0.7em, below left=of mpx] (mv1) {$v_1$}; 
  \node[ivert, inner sep=0.7em, below right=of mpy] (mv2) {$v_2$}; 
 
 \node[ipoint,below=of mv1] (mp1) {};  
  \node[ipointlabel,below=of mp1] (mp1l) {$p_1$};
  
 \node[ipoint,below=of mv2] (mp2) {};  
  \node[ipointlabel,below=of mp2] (mp2l) {$p_2$};  

  \node[node distance=3.5em and 0.7em,ipoint,below=of mvk] (mpz) {}; 
  \node[ipointlabel,below=of mpz] (mpzl) {$b_z$};  

  \draw[->] (mpx) -- (mvk);
  \draw[->] (mv1) -- (mpx);
  \draw[->] (mvk) -- (mpy);
  \draw[->] (mpy) -- (mv2);
  \draw[->] (mv1) -- (mpz);
  \draw[->] (mpz) -- (mv2);
  \path[->] 
    (mv2) edge (mp2)
    (mp1) edge (mv1);
        
  }
  \begin{pgfonlayer}{background}
  \node[rectangle, fill=black!5, draw=black!30,rounded corners=1ex,
  fit=(mv1) (mp1l) (mp2l) (mv2) (mvk) (mpzl)] (M) {};  
  \end{pgfonlayer}

  \path[->,shorten <= 3pt,shorten >= 3pt] 
  (J) edge node[above] {$m_1$} (L)
  (J) edge node[above] {$m_2$} (R)
  (L) edge node[above] {$m_1'$} (M)
  (R) edge node[above] {$m_2'$} (M);

{ [node distance=0em and 0em]
    \node[above=of M, circle, inner sep=3pt] (pushout) {};
    \draw[line width=1pt] (pushout.west) -- (pushout.north) -- (pushout.east); 
  }

\end{tikzpicture} 
\end{center} 
The grey boxes are drawn around the graphs involved to distinguish between edges in the graphs and those of the pushout diagram. The image of the maps are indicated by the naming of edge-points and vertices.
\end{example}


A particularly important special case of merging is composition along half-edges, which we call \emph{plugging}.

\begin{definition}[Plugging]
  A graph merging $G_1 \mergew{b_1,b_2} G_2$ is called a
  \emph{plugging} and written $G_1 \plugw{b_1,b_2} G_2$, when the
  graph being merged on is a point graph; i.e. in the span $G_1
  \overset{b_1}{\hookleftarrow} P \overset{b_2}{\hookrightarrow} G_2$,
   $P$ is a point-graph.
\end{definition}

\begin{example}  \label{ex4:plug} 
An illustration of plugging using pushouts.
\begin{center}
  \begin{tikzpicture}[mcgraph,node distance=5em and 7em]
  \node[ipoint] (js1) {};  
  \node[ipointlabel, below=of js1] (js1l) {$p_{x}$};  

  { [node distance=0.5em and 2em]
  \node[ipoint, below=of js1l] (js2) {};  
  \node[ipointlabel, below=of js2] (js2l) {$p_{y}$};  
  }

  \begin{pgfonlayer}{background}
  \node[rectangle, fill=black!5, draw=black!30,rounded corners=1ex, fit=(js1) (js2) (js2l) (js1l)] (J) {};  
  \end{pgfonlayer}

  \node[ipoint, left=of J] (lt1) {};
  \node[ipointlabel, below=of lt1] (lt1l) {$p_x$};  
  { [node distance=0.5em and 2em]
  \node[ivert, inner sep=0.7em, below left=of lt1] (lv1) {$v_1$}; 
  \node[ipoint,below right=of lv1] (ls1) {};  
  \node[ipointlabel,below=of ls1] (ls1l) {$p_y$};  
  \path[->] (lv1) edge (lt1)
            (ls1) edge (lv1);
  }
  \begin{pgfonlayer}{background}
  \node[rectangle, fill=black!5, draw=black!30,rounded corners=1ex, fit=(lv1) (ls1) (lt1) (ls1l) (lt1l)] (L) {};  
  \end{pgfonlayer}

  \node[ipoint, right=of J] (rs1) {};  
  \node[ipointlabel, below=of rs1] (rs1l) {${p}_{x}$};  
  { [node distance=0.5em and 2em]
  \node[ivert, inner sep=0.7em, below right=of rs1] (rv1) {$v_2$}; 
  \node[ipoint, below left=of rv1] (rt1) {$$};  
  \node[ipointlabel, below=of rt1] (rt1l) {$p_{y}$};  
  \path[->] (rv1) edge (rt1)
            (rs1) edge (rv1);
  }
  \begin{pgfonlayer}{background}
  \node[rectangle, fill=black!5, draw=black!30,rounded corners=1ex, fit=(rv1) (rs1) (rt1) (rs1l) (rt1l)] (R) {};  
  \end{pgfonlayer}

  \node[below=of J] (kc) {};  
  { [node distance=0.5em and 1em]
  \node[ivert, inner sep=0.7em, left=of kc] (kv1) {$v_1$}; 
  \node[ivert, inner sep=0.7em, right=of kc] (kv2) {$v_2$}; 
  
  \node[ipoint,above=of kc] (bx) {}; 
  \node[ipoint,below=of kc] (by) {}; 
  \node[ipointlabel,above=of bx] (bxl) {$p_x$};  
  \node[ipointlabel,below=of by] (byl) {$p_y$};  

  \draw[->] (kv1) -- (bx);
  \draw[->] (bx) -- (kv2);
  \draw[->] (kv2) -- (by);
  \draw[->] (by) -- (kv1);
  }

  \begin{pgfonlayer}{background}
  \node[rectangle, fill=black!5, draw=black!30,rounded corners=1ex, fit=(kv1) (kv2) (bxl) (byl)] (M) {};  
  \end{pgfonlayer}

  \path[->,shorten <= 3pt,shorten >= 3pt] 
  (J) edge node[above] {$b$} (L)
  (J) edge node[above] {$b'$} (R)
  (L) edge node[above] {$m$} (M)
  (R) edge node[above] {$m'$} (M);

{ [node distance=0em and 0em]
    \node[above=of M, circle, inner sep=3pt] (pushout) {};
    \draw[line width=1pt] (pushout.west) -- (pushout.north) -- (pushout.east); 
  }
  \end{tikzpicture}
\end{center}
\end{example}



Note that the special case of plugging formed by pushouts on the empty open-graph is the disjoint union of open-graphs, written simply as $G + H$; visually this corresponds to placing graphs side by side.

We now introduce a dual notion to merging, called \emph{subtraction} which is formed by $S$-adhesive pushout complements. Intuitively subtraction removes part of an open-graph identified by a monomorphism. We first give a concrete definition for subtraction and then we show that this definition does indeed produce $S$-adhesive pushout complements.



\begin{definition}[Subtraction] \label{def:subtraction} 
  We define the \emph{subtraction} of $G$ from $M$, at a mono $m : G
  \injmap M$, written $M -_m G$, as the graph $H$ defined by:
	\begin{align*}
		P_{H} & = (P_M \backslash m[P_A]) + P_B \\
		E_{H} & = (P_M \backslash m[P_A]) \\
	\end{align*}
  \noindent This removes all of $G$ from $M$, but re-introduces the
  vertices from the boundary graph $B$ of $G$. Let $b : B \rightarrow G$ be the boundary map of
  $G$.  The source and target maps are then defined as follows for each $e \in E_{H}$, which includes
  edges that formerly went to boundary points in $G$:
	\begin{align*}
		s_{H}(e) & =  \begin{cases}
						  b^o(p) &
						    \textit{if}\ p \in \Out(G)\ \textit{and}\ m(p) = s_M(e) \\
						  s_M(e) & \textit{otherwise}
					   \end{cases} \\
		t_{H}(e) & =  \begin{cases}
		  				  b^i(p) &
		    			  	\textit{if}\ p \in \In(G)\ \textit{and}\ m(p) = t_M(e) \\
						  t_M(e) & \textit{otherwise}
					   \end{cases} \\
	\end{align*}
  The process of removing the image of $m$ leaves some edges without a source or a target. In that case, the edge is re-connected to a point in $P_B$. We call the induced embedding, $c : B \hookrightarrow H$, the \emph{coboundary} of $b$ with respect to $m$.  When the embedding of $G$ into $M$ is implicit, we simply write $M-G$.
\end{definition}

For this definition to be valid, we need to show that $H$ is an
open-graph; specifically, that the maps $s_H$ and $t_H$ are total and
well-defined. 
\begin{proof}
  The source map $s_H$ is total because the source of an edge, $e \in
  E_H$, is in the image of $m$ iff it is an output of $G$. Because $m$ is
  mono, $s_H$ is well-defined. Similarly for $t_H$.
\end{proof}


\begin{theorem}\label{lem:subtracts-are-s-po-complements}
  Subtractions by graphs without isolated points are $S$-adhesive pushout complements: given $H := M -_m G$, the following diagram is an $S$-adhesive pushout:
 	\begin{equation}\label{eqn:po-square-exists}
 		\posquare{B}{G}{H}{M}{b}{f}{c}{m}
 	\end{equation}
  where $b$ is the boundary map of $G$, and $c$ is the coboundary of $m$.
\end{theorem}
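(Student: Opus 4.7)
The plan is to establish the $S$-adhesive pushout by verifying that $S$ applied to the square yields a pushout in the ambient adhesive category $\catGraphSlice$. Since $S$ reflects pushouts by definition of a selective adhesive functor, this simultaneously shows the square is a pushout in $\catOGraph$ and that $S$ preserves it, which is exactly what Definition \ref{def:adhesive-span} demands. So the strategy is to forget about $\catOGraph$ momentarily, compute the pushout pointwise in the underlying presheaf category $[\mathbb G, \catSet]$, and then check that the resulting morphisms $c$ and $f$ legitimately live in $\catOGraph$.

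The first step is to verify that $c : B \to H$ and $f : H \to M$ are morphisms of $\catOGraph$. For $c$ the fullness-on-vertices condition is vacuous, because $B$ is a point-graph and therefore contains no vertex-points in its image. For $f$, take a vertex $v$ in $H$; by construction $v \in P_M \setminus m[P_G]$, and if some $M$-edge $e$ adjacent to $v$ lay in $m[E_G]$, say $e = m(e')$ with $s_M(e) = v$, then the graph-morphism law would force $v = m(s_G(e')) \in m[P_G]$, a contradiction. So every edge of $M$ adjacent to $v$ already belongs to $E_H$, and $f$ is full on vertices.

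Next I would verify the pushout componentwise, using that colimits in a presheaf category are computed pointwise. On edges, $E_B = \emptyset$, and the disjoint decomposition $E_M = (E_M \setminus m[E_G]) + m[E_G] = E_H + E_G$ exhibits $E_M$ as exactly the required coproduct via $[f,m]$. On points, the pushout of $P_H \overset{c}{\leftarrow} P_B \overset{b}{\to} P_G$ is $(P_H + P_G)/{\sim}$, where $c(p) \sim b(p)$; substituting $P_H = (P_M \setminus m[P_G]) + P_B$ collapses the $P_B$ summand onto $b[P_B] \subseteq P_G$ and produces $(P_M \setminus m[P_G]) + P_G$, identified with $P_M$ via $[f,m]$. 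The no-isolated-points hypothesis enters precisely here: it ensures $b$ is mono, so this collapse is an injective gluing rather than a nontrivial further quotient, and it also lets Lemma \ref{lem:adhesive-complement} apply to give uniqueness of the complement.

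The main obstacle is checking compatibility with the source and target maps, and this is where the two-case definition of subtraction is designed to pay off. Consider an edge $e \in E_H$ whose source was rerouted, i.e.\ $s_M(e) = m(p)$ for some $p \in \Out(G)$, so Definition \ref{def:subtraction} sets $s_H(e) = b^o(p)$. The pushout identifies $c(b^o(p)) = b^o(p)$ in $H$ with $b(b^o(p)) = p$ in $G$, and the cotuple $[f,m]$ sends this identified point to $m(p) = s_M(e)$, as required. The completely symmetric argument handles targets and inputs, and edges with both endpoints outside $m[P_G]$ are left untouched. Since every arrow is already a slice morphism over $2_\mathcal{G}$, the typing is automatic in $\catGraphSlice$, and reflection of pushouts by $S$ lifts the result back to $\catOGraph$, completing the proof.
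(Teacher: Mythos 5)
Your proof is correct, but it takes a genuinely different route from the paper's. The paper's own argument has two steps: it first shows that the span $(b,c)$ is \emph{boundary-coherent} (by the definition of subtraction, a point of $B$ that is an input under $b$ is an output under $c$ and vice versa), so that the earlier theorem that boundary-coherent spans are $S$-adhesive immediately gives existence and $S$-adhesivity of the pushout of $(b,c)$; it then asserts, ``by case analysis,'' that this pushout $M''$ is isomorphic to $M$ compatibly with $m$. You bypass the boundary-coherence machinery entirely: you check by hand that $c$ and $f$ are legitimate $\catOGraph$-morphisms (fullness on vertices is vacuous for $c$ since $B$ is a point-graph, and your observation that an edge of $m[E_G]$ adjacent to a vertex of $P_M \setminus m[P_G]$ would force that vertex into $m[P_G]$ correctly handles $f$), compute the pushout pointwise in the ambient slice category ($E_M \cong E_H + E_G$ over $E_B = \emptyset$, and the injective gluing of $P_H$ and $P_G$ along $B$ recovering $P_M$), verify the source/target reroutings against the cotuple $[f,m]$, and then invoke reflection of pushouts by $S$. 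What your approach buys is that the paper's terse ``by case analysis'' step is made fully explicit --- your source/target bookkeeping \emph{is} that case analysis --- and the logic (image square is a pushout in $\catGraphSlice$, hence by reflection the square is a pushout in $\catOGraph$ that $S$ preserves) is sound. What the paper's route buys is economy, since the boundary-coherence theorem already guarantees the pushout lands in $\catOGraph$, and it establishes boundary-coherence of $(b,c)$ itself, which is reused immediately after the theorem to conclude that $(b,c)$ is a plugging with $G \plugw{b,c} H \cong M$. Your identification of where the no-isolated-points hypothesis enters (monicity of $b$, hence an injective gluing rather than a further quotient, and applicability of the uniqueness lemma for pushout complements) is accurate and matches the remarks surrounding the theorem.
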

\begin{proof}
	First, we show $b, c$ is boundary coherent. By the definition of subtraction, for $p \in B$, if
  $b(p)$ is an input, then $c(p)$ is an output. Similarly, if $b(p)$ is an output,
  $c(p)$ is an input. So, $b,c$ satisfies the boundary coherence condition.


	The pushout of $b$ and $c$ is the result of identifying the boundary of
  $G$ with its coboundary in $H$. By case analysis, the resulting graph $M''$ is
  isomorphic to $M$, and for the induced embedding $m''$ of $G$ into
  $M''$, the following diagram commutes:
	\begin{center}
		\ctri{G}{M}{M''}{m}{m''}{\cong}
	\end{center}
	So, diagram (\ref{eqn:po-square-exists}) is also a pushout square for some $f$.
\end{proof}

Since $G$ contains no isolated points, the maps $b$ and $c$ from the theorem are mono. As $B$ is a point graph, this implies $(b,c)$ defines a plugging, and $H := M -_m G$ is the uniquely determined open-graph such that $G \plugw{b,c} H \cong M$.

\section{Rewriting with Open-Graphs}
\label{sec:rewriting}

We now introduce rewrite rules for open-graphs, how they can be applied, under what conditions they can commute with merging and plugging, and how rewrites can themselves be composed. Finally we present a rewrite system called edge-homeomorphism that lets us ignore intermediate edge-points. 

\begin{definition}[Rewrite] \label{def:rewrite-rule}
\begin{changebar}
A span $L \overset{b_1}{\longleftarrow} B \overset{b_2}{\longrightarrow} R$, in which $L$ and $R$ share the same boundary, $B$, by monos $b_1$ and $b_2$, is called a \emph{rewrite rule} and is written $\rwrulew{L}{b_1,b_2}{R}$. The rewrite rule is said to \emph{rewrite} $G$ to $G'$ at a mono $m : L \hookrightarrow G$, called the \emph{matching}, when $G'$ is defined according to the following double pushout: 

\begin{center}
	\begin{tikzpicture}
		\matrix (m) [cdiag] {
        L & B & R \\
        G & G-_{m}L & G'\\
		};
		\path [arrs]
	  	  	(m-1-2) edge [left hook-latex] node [swap] {$b_1$} (m-1-1)
	  	  	(m-1-2) edge [right hook-latex] node {$b_2$} (m-1-3)

	  	  	(m-1-1) edge[right hook-latex] node [swap] {$m$} (m-2-1)
	  	  	(m-1-2) edge[right hook-latex] node {} (m-2-2)
	  	  	(m-1-3) edge[right hook-latex] node {} (m-2-3)

	  	  	(m-2-2) edge [left hook-latex] node {} (m-2-1)
	  	  	(m-2-2) edge [right hook-latex] node {} (m-2-3);
	\NEbracket{(m-2-1)};
    \NWbracket{(m-2-3)};
	\end{tikzpicture} 
\end{center} 
where the left pushout serves to compute the subtraction $G-_{m}L$, and the right pushout the rewritten graph $G'$, which we shall also write as $G[\rwsubstw{L}{b_1,b_2}{R}]_m$. 
\end{changebar}
\end{definition}

Notice that because we require a rule to be a span of monos, there can be no isolated points in $L$ or $R$, as the boundary map is 2-1 on isolated points.




\begin{example}[Circles]  \label{ex:circle-rewrite}
We now return to the challenging example introduced at the end
of \S\ref{sec:motiv}. We will rewrite the graph
\begin{tikzpicture}[circuit, node distance=0.5em and 0.7em]
\node (x1) [] {};
\node (g) [andg, right=of x1, inner sep=0.4em] {};
\node (x2) [right=of g] {};
\node (x3) [below=of g] {};
\draw [dline] (g) -- (x2.center) -- (x3.center) -- (x1.center) -- (g);
\end{tikzpicture} 
by 
\begin{tikzpicture}[circuit]
\node (x1) [halfe] {};
\node (g) [andg, right=of x1, inner sep=0.4em] {};
\node (x2) [halfe,right=of g] {};
\draw [dline] (x1) -- (g) -- (x2);
\end{tikzpicture}
$\rwarrow$
\begin{tikzpicture}[circuit]
\node (x1) [halfe] {};
\node (x2) [halfe,right=of x1] {};
\draw [dline] (x1) -- (x2);
\end{tikzpicture}
to get \begin{tikzpicture}[circuit]
		\node [style=none] (0) at (0, 0.25) {};
		\node [style=none] (1) at (0, -0.25) {};
		\draw[in=180, out=180, looseness=2.00, shorten >=-2.5 pt, dline] (1.center) to (0.center);
		\draw[in=0, out=0, looseness=2.00, shorten <=0 pt, shorten >=-0.1 pt, line] (0.center) to (1.center);
\end{tikzpicture}}.
The pushout construction for this rewrite is as follows:

\begin{center}
\begin{tikzpicture}[mcgraph,node distance=5em and 7em]
	\matrix (m) [node distance=2em and 2em, row sep=3em, column sep=5em,inner sep=0.5em] {
{  
  \node (lc) [] {};
  { [node distance=0.5em and 2em]
  \node (lv1) [andg, at=(lc), inner sep=0.4em] {};
  \node (lt1) [ipoint, right=of lv1.center] {};
  \node (ls1) [ipoint, left=of lv1.center] {};
  \node[ipointlabel, below=of ls1] (ls1l) {$s$};  
  \node[ipointlabel, below=of lt1] (lt1l) {$t$};  
  \path[->] (lv1) edge (lt1)
            (ls1) edge (lv1);
  }
}
& 
{ 
  \node[ipoint] (js1) {};  
  \node[ipointlabel, below=of js1] (js1l) {$s$};  
  \node[ipoint, right=of js1] (js2) {};  
  \node[ipointlabel, below=of js2] (js2l) {$t$};  
}
& 
{  
  \node (rs1) [ipoint] {};
  { [node distance=0.5em and 2em]
  \node (rt1) [ipoint, right=of rs1] {};
  \node[ipointlabel, below=of rs1] (rs1l) {$s$};  
  \node[ipointlabel, below=of rt1] (rt1l) {$t$};  
  \path[->] (rs1) edge (rt1);
  }
}
\\
{  
  \node[node distance=3em and 4em] (k1c) {};  
  { [node distance=0.5em and 1.6em]
  \node[andg, inner sep=0.4em,at=(k1c)] (k1x) {}; 
  \node[ipoint, left=of k1x.center] (k1s) {}; 
  \node[ipoint, right=of k1x.center] (k1t) {};   
  \node[ipointlabel,below left=of k1s] (k1sl) {$s$};  
  \node[ipointlabel,below right=of k1t] (k1tl) {$t$};  
  \draw[->] (k1s) -- (k1x);
  \draw[->] (k1x) -- (k1t);
  \node[below=of k1x] (k1y) {}; 
  \draw[rounded corners=2ex,->] (k1t) -- (k1y.center) -- (k1s.south);
  }
}
&
{ 
  \node[ipoint] (m1s) {}; 
  \node[ipoint, right=of m1s] (m1t) {};   
  \node[ipointlabel,below=of m1s] (m1sl) {$s$};  
  \node[ipointlabel,below=of m1t] (m1tl) {$t$};  
  \path[rounded corners=2ex,->]
    (m1t) edge[bend left=40] (m1s.south);
}
 &  
{  
  {\node[ipoint] (k2s) {}; 
  \node[ipoint, right=of k2s] (k2t) {}; 
  \node[ipointlabel,below=of k2s] (k2sl) {$s$};  
  \node[ipointlabel,below=of k2t] (k2tl) {$t$};  
  \path[->] (k2s) edge (k2t);
  \path[->] (k2t) edge[bend left=40] (k2s.south);
  }
}
\\
};

  \begin{pgfonlayer}{background}
  \node[rectangle, fill=black!5, draw=black!30,rounded corners=1ex, fit=(lv1) (ls1) (lt1) (ls1l) (lt1l)] (L) {};  
  \end{pgfonlayer}

  \begin{pgfonlayer}{background}
  \node[rectangle, fill=black!5, draw=black!30,rounded corners=1ex, fit=(rs1) (rt1) (rs1l) (rt1l)] (R) {};  
  \end{pgfonlayer}

  \begin{pgfonlayer}{background}
  \node[rectangle, fill=black!5, draw=black!30,rounded corners=1ex, fit=(k1x) (k1sl) (k1tl) (k1y)] (K1) {};  
  \end{pgfonlayer}

  \begin{pgfonlayer}{background}
  \node[rectangle, fill=black!5, draw=black!30,rounded corners=1ex,
  fit=(m1s) (m1t) (m1sl) (m1tl)] (M) {};  
  \end{pgfonlayer}

  \begin{pgfonlayer}{background}
  \node[rectangle, fill=black!5, draw=black!30,rounded corners=1ex, fit=(js1) (js2) (js2l) (js1l)] (J) {};  
  \end{pgfonlayer}

 \begin{pgfonlayer}{background}
  \node[rectangle, fill=black!5, draw=black!30,rounded corners=1ex, fit=(k2s) (k2t) (k2sl) (k2tl)] (K2) {};
  \end{pgfonlayer}

{ \path[shorten >= 3pt,shorten <= 3pt,arrs]
		  	  	(J) edge node [swap] {$b_1$} (L)
		  	  	(J) edge node {$b_2$} (R)

		  	  	(L) edge node [swap] {$m$} (K1)
		  	  	(J) edge node {} (M)
		  	  	(R) edge node {} (K2)

		  	  	(M) edge node {} (K1)
		  	  	(M) edge node {} (K2);
}
		\NEbracket{($(K1.north east) - (0.2cm,0.2cm)$)};
    \NWbracket{($(K2.north west) - (-0.2cm,0.2cm)$)};
  \end{tikzpicture}
\end{center}
\end{example}

Notice that the above rewrite contains additional intermediate edge-points. Informally, these are intended to be treated as part of the edge. In \S\ref{sec:homeomorphism}, we formalise this idea by introducing rewrite rules that insert and remove these intermediate edge-points.

\subsection{Compatibility}

It may initially be surprising to realise that certain pushouts can prohibit certain rewrites. For example consider the following:

\begin{example}
Let $G :=$
\begin{tikzpicture}[circuit,baseline=-0.25em]
  \node[ipoint] (p1) {}; 
\end{tikzpicture},
$H :=$
\begin{tikzpicture}[circuit,baseline=-0.25em]
\node (x1) [ipoint] {};
\node (g) [andg, right=of x1] {$v$};
\node (x2) [ipoint,right=of g] {};
\draw [dline] (x1) -- (g) -- (x2);
\end{tikzpicture}.
For $K :=$
\begin{tikzpicture}[circuit,baseline=-0.25em]
  \node[ipoint] (p1) {}; 
  \node[ipoint, right=of p1] (p2) {};
\end{tikzpicture},
we can find maps $f : K \rightarrow G, g : K \rightarrow H$ such that the $S$-adhesive pushout
$G +_{f,g} H :=$
\begin{tikzpicture}[circuit,baseline=-0.25em] 
\node (g) [andg] {$v$}; 
\node (i1) [ipoint, left=of g] {};
  \path [dline] (i1) edge[bend left=20] (g);
  \path [dline] (g) edge[bend left=20] (i1);
\end{tikzpicture}. While the left-hand side of the rewrite 
\begin{tikzpicture}[circuit,baseline=-0.25em]
\node (x1) [ipoint] {};
\node (g) [andg, right=of x1] {$v$};
\node (x2) [ipoint,right=of g] {};
\draw [dline] (x1) -- (g) -- (x2);
\end{tikzpicture}
$\rwarrow$
\begin{tikzpicture}[circuit,baseline=-0.25em]
\node (x1) [ipoint] {};
\node (x2) [ipoint,right=of x1] {};
\draw [dline] (x1) -- (x2);
\end{tikzpicture} matches $H$, it does not match $G +_{f,g} H$.
\end{example}

We will be primarily concerned with pluggings, and so we now provide a precise definition of what it means for a plugging and a rewrite to be compatible.

\begin{definition}[Compatible] \label{def:merge-rewrite-compatible}
  A plugging $G \plugw{p,q} H$ and a rewrite $G[\rwsubst{L}{R}]_m$
  are said to be \emph{compatible} when there exists a map $\widehat p$ and a matching $\widehat m$, such that
 $(\widehat p, q)$ is a plugging, and:
  \[ G[\rwsubst{L}{R}]_m
       \plugw{\widehat p,q} H \cong (G \plugw{p,q} H)[\rwsubst{L}{R}]_{\widehat m} \]
\end{definition}

We can actually show that \emph{all} pluggings and rewrites are compatible. Before we prove this important theorem, we first show that the boundary of an open-graph in invariant under rewriting.

\begin{theorem}\label{thm:rewrite-preserves-boundary} 
  Rewriting preserves the boundary of an open-graph. 
	Specifically, let the top two squares of the following diagram define the rewrite $G[\rwsubst{L}{R}]_m$:
	
	\begin{center}
	\begin{tikzpicture}
		\matrix (m) [cdiag] {
			L & B       & R         \\
	        G & G -_m L & G[\rwsubst{L}{R}]_m \\
	          & B'      &           \\
		};
		\path [arrs]
	       (m-1-2) edge node [swap] {$b_1$} (m-1-1)
	       (m-1-2) edge node {$b_2$} (m-1-3)

	       (m-2-2) edge node [swap,pos=0.4] {$s$} (m-2-1)
	       (m-2-2) edge node {$s'$} (m-2-3)

	       (m-1-1) edge node [swap] {$m$} (m-2-1)
	       (m-1-2) edge node {$c$} (m-2-2)
	       (m-1-3) edge node {$m'$} (m-2-3)

	       (m-3-2) edge node {$b_1'$} (m-2-1)
	       (m-3-2) edge node [swap] {$k$} (m-2-2)
	       (m-3-2) edge node [swap] {$b_2'$} (m-2-3);
	\NEbracket{(m-2-1)}
	\NWbracket{(m-2-3)}
	\end{tikzpicture}
	\end{center}
	
	Then there exists a map $k$ and a span of boundary maps $b_1'$, $b_2'$ making the bottom two triangles commute.
\end{theorem}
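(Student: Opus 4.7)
The plan is to show that the boundary graph $B_G := \In(G) + \Out(G)$ of $G$ also serves, up to canonical isomorphism, as a boundary graph of the rewritten graph $G[\rwsubst{L}{R}]_m$, and to exhibit $k$ witnessing this. I take $B' := B_G$ and $b_1'$ to be the canonical boundary map $b_G : B_G \to G$. The map $k : B' \to G -_m L$ will be built piecewise using the explicit description $P_{G -_m L} = (P_G \setminus m[P_L]) + P_B$ from Definition~\ref{def:subtraction}, by cases on whether each boundary point of $G$ lies in $m[L]$.

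The key observation enabling the split is this: if $p \in \In(G)$ lies in $m[L]$, say $p = m(p')$, then because $m$ is a graph morphism and $p$ has no in-edges in $G$, $p'$ can have no in-edges in $L$, so $p' \in \In(L)$; hence there is a unique $p'' \in \In(B)$ with $b_1(p'') = p'$. A dual statement holds for $\Out(G)$. I then set $k(p)$ to be the canonical inclusion of $p$ into the first summand of $P_{G -_m L}$ when $p \notin m[L]$, and $k(p) := c(p'')$ when $p \in m[L]$, using the unique $p''$ just identified. This is a well-defined graph morphism because $B'$ is a point-graph (so any function on points suffices). The commutation $s \circ k = b_1'$ follows by a routine case check: directly in the first case, and via $s \circ c = m \circ b_1$ (commutativity of the left pushout square) in the second. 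Setting $b_2' := s' \circ k$ then automatically makes the right triangle commute.

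The main obstacle is verifying that $b_2'$ is the canonical boundary map of $G[\rwsubst{L}{R}]_m$. This is a bookkeeping argument tracking boundary structure through the subtraction construction and the right pushout. For $p \notin m[L]$, $k(p)$ lies outside the coboundary $c[B]$ in $G -_m L$, so the pushout glueing with $R$ does not touch it; its boundary status is inherited unchanged from $G$. For $p \in m[L]$, $k(p) = c(p'')$ is identified in the pushout with $b_2(p'') \in R$; since $L$ and $R$ share the same boundary $B$ (Definition~\ref{def:same-boundary}), the maps $b_2^i$ and $b_2^o$ land in $\In(R)$ and $\Out(R)$ respectively, so inputs of $G$ inside $m[L]$ map to inputs of $G[\rwsubst{L}{R}]_m$, and similarly for outputs. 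Surjectivity of $b_2'$ onto the boundary of $G[\rwsubst{L}{R}]_m$ follows symmetrically: any boundary point of the rewritten graph comes either from $G -_m L$ outside $c[B]$ (and hence from a boundary point of $G$ outside $m[L]$) or from $m'[R]$ at a point of $b_2[B]$, in which case applying $b_1$ recovers the corresponding boundary point of $G$ in $m[L]$. That $L$ and $R$ share the \emph{same} $B$, rather than merely isomorphic boundaries, is precisely what makes this correspondence exact rather than merely natural.
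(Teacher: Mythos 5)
Your proposal is correct and follows essentially the same route as the paper's proof: take $B'$ to be the boundary graph of $G$, split into cases according to whether a boundary point of $G$ lies in $m[L]$ (observing that in the latter case it must come from a boundary point of $L$, hence of $B$), lift through $s$ to define $k$, set $b_2' := s'\circ k$, and use the shared boundary of $L$ and $R$ to see that inputs stay inputs and outputs stay outputs in the rewritten graph. The only cosmetic difference is that you build $k$ explicitly from the concrete description of subtraction, whereas the paper obtains it from monicity of $s$ after showing the boundary of $G$ lies in its image.
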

\begin{proof}
	Let $B'$ be the boundary of $G$, and $b_1'$ be its inclusion into $G$. We can show that the boundary of $G$ is in the image of $s$, by the definition of subtraction. If some point $x$ $B'$ is not in the image of $m$, then it is still in $G -_m L$. If it is in the image of $m$, then it must be in the boundary of $L$ in $G$. Since a copy of this boundary is in $G -_m L$, $x$ must be in the image of $s$. Thus, for all $x$ $B'$, there exists $x'$ in $G -_m L$ such that $s(x') = x$. Since $s$ is mono, $x'$ is unique, so let $k$ be defined as the map sending $x$ to $x'$, and let $b_2' = s' k$.
	
	It suffices to show that $s' k$ is a boundary map. If $x$ is an input of $G$, then $k(x)$ is either still an input or becomes an isolated point. In the latter case, it must come from an input of $L$, and hence an input of $R$. Thus $s'(k(x))$ is an input in the combined graph. This follows similarly for outputs. It can also be shown that $s' k$ covers the boundary of $G[\rwsubst{L}{R}]_m$, so it is a boundary map.
\end{proof}



We can now use this theorem and Thm \ref{thm:pushout-and-rewrite} to show not only that pluggings and rewrites are compatible, but explicitly define the maps $\widehat m$ and $\widehat p$ used in Def \ref{def:merge-rewrite-compatible}.

\begin{theorem} \label{thm:plugging-rewrite-compat}
	Rewriting and plugging are compatible. Suppose $(p : K \rightarrow G, q : K \rightarrow H)$ is a plugging and $i$ the embedding of $G$ into $G \plugw{p,q} H$. Let $m : L \rightarrow G$ be a matching of a rewrite rule $\rwrule{L}{R}$. Then there exists $\widehat p$ such that $(\widehat p, q)$ is a plugging, $\widehat m := i m$ is a matching, and
\[ 
G[\rwsubst{L}{R}]_m \plugw{\widehat p,q} H \cong (G \plugw{p,q} H)[\rwsubst{L}{R}]_{im}.
\] 
\end{theorem}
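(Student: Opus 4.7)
The plan is to deduce this from Theorem~\ref{thm:pushout-and-rewrite} by constructing the required data $\widehat p$ and verifying the compatibility hypothesis of that theorem. The new ingredient not yet in hand is a factorisation of $p$ through the coboundary $s : G -_{b_1,m} L \to G$.

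First, I would construct $p' : K \to G -_{b_1,m} L$ with $s p' = p$. Since $(p,q)$ is a plugging and $K$ is a point graph, $p$ lands in the boundary of $G$. By (the proof of) Theorem~\ref{thm:rewrite-preserves-boundary}, every boundary point of $G$ lifts canonically to $G -_{b_1,m} L$: a boundary point outside $m[L]$ is preserved verbatim by Definition~\ref{def:subtraction}, while a boundary point in $m[L]$ must come from the boundary of $L$ (using that $m$ is full on vertices), and so lifts via the coboundary of $L$ in $G -_{b_1,m} L$. Precomposing this canonical lift with $p$ yields $p'$; I then set $\widehat p := s' p'$, where $s' : G -_{b_1,m} L \to G[\rwsubst{L}{R}]_m$ is the other coboundary in the double pushout.

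Second, I would verify that $(p',q)$ and $(\widehat p, q)$ are boundary coherent, i.e.\ pluggings. The image of $p'$ consists of boundary points of $G -_{b_1,m} L$, because either $p(k)$ was an original boundary point of $G$ preserved by the subtraction, or it arose as a coboundary point, which is again a boundary point of the subtraction. The input/output polarity at each $k \in K$ agrees with its polarity under $p$, so the boundary coherence of $(p,q)$ carries over to $(p',q)$, and pushing along $s'$ gives the same for $(\widehat p, q)$.

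With all three spans in place, the compatibility diagram~(\ref{dia:compatible}) of Theorem~\ref{thm:pushout-and-rewrite} commutes by construction ($sp' = p$ and $s'p' = \widehat p$). To invoke that theorem it remains to check that $im$ is a mono and an $S$-adhesive matching: monicity is immediate from $i$ being a pushout injection along a mono and $m$ being mono, and the existence of the required pushout complement, namely $(G -_{b_1,m} L) \plugw{p',q} H$, is exactly Theorem~\ref{thm:plugging-and-subtraction} applied to the plugging $(p,q)$ and the subtraction of $L$. The claimed isomorphism $G[\rwsubst{L}{R}]_m \plugw{\widehat p,q} H \cong (G \plugw{p,q} H)[\rwsubst{L}{R}]_{im}$ then drops out as the conclusion of Theorem~\ref{thm:pushout-and-rewrite}. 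The main obstacle will be the careful bookkeeping in the first step: constructing $p'$ requires case analysis on whether $p(k)$ lies in $m[L]$ and uses the explicit description of points and incidence in Definition~\ref{def:subtraction}; once the lift is established, the remaining steps are direct instances of results already developed in Sections~\ref{sec:adhesive-functors} and~\ref{sec:compose-graphs}.
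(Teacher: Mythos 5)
Your proposal is correct and follows essentially the same route as the paper's proof: factor $p$ through the boundary of $G$, lift it along the canonical map into $G -_{b_1,m} L$ provided by Theorem~\ref{thm:rewrite-preserves-boundary} to obtain $p'$ and $\widehat p = s'p'$, check boundary coherence of the three spans by tracking input/output polarity, and conclude via Theorem~\ref{thm:pushout-and-rewrite}. Your explicit appeal to Theorem~\ref{thm:plugging-and-subtraction} for the existence of the pushout complement of $im$ is a detail the paper leaves implicit, but it is not a different argument.
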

\begin{proof}
	Since $p$ is mono, $i$ is mono, so is $i m$. Since $p,q$ is a plugging, the map $p$ factors through the boundary map $b_1' : B' \rightarrow G$. Let $r$ be a map such that $p = b_1' r$. For $k$ and $b_2'$ defined as in Thm \ref{thm:rewrite-preserves-boundary}, let $p' = k r$ and $\widehat p = b_2' r$. Then by the above theorem, the following diagram commutes:
	\begin{center}
		\begin{tikzpicture}
		\matrix (m) [cdiag] {
		G                   &   &   \\
		G -_m L             & P & H \\
		G[\rwsubst{L}{R}]_m &   &   \\
		};
		\path [arrs]
		  (m-2-2) edge [bend right] node [swap] {$p$} (m-1-1)
		  (m-2-2) edge node [swap] {$p'$} (m-2-1)
		  (m-2-2) edge [bend left] node {$\widehat p$} (m-3-1)
		  (m-2-1) edge node {$s$} (m-1-1)
		  (m-2-1) edge node [swap] {$s'$} (m-3-1)
		  (m-2-2) edge node {$q$} (m-2-3);
		\end{tikzpicture}
	\end{center}
	
  If $p(x)$ is an input, then $p'(x)$ and $\widehat p(x)$ are both inputs, and similarly for outputs. Therefore $(p,q)$, $(p',q)$, and $(\widehat p, q)$ are all boundary-coherent spans, and hence $S$-adhesive spans. The result then follows from Thm \ref{thm:pushout-and-rewrite}. 
\end{proof}

\subsection{Composition of Rewrites}
\label{sub:rewrite-composition}

%

\begin{definition}[Extension]
  Given an open-graph $G$ and a rewrite rule $r :=
  \rwrule{L}{R}$, when $r$ rewrites
  $G$ to $G[\rwsubst{L}{R}]_m$, then the rewrite rule
  $\rwrulew{G}{b_1,b_2}{G[\rwsubst{L}{R}]_m}$ is called the
  \emph{extension} of $r$ by $m$, and written $\exten{r}{m}$. 
\end{definition}

Notice that this is a well defined rewrite rule because the boundary span $b_1, b_2$ is uniquely defined by Thm~\ref{thm:rewrite-preserves-boundary}.  

\begin{example}
Returning to Example~\ref{ex:circle-rewrite}, the extension of this rewrite is
the span:
\begin{center}
\begin{tikzpicture}[circuit, node distance=0.7em and 0.7em]
{  
  \node[node distance=3em and 4em] (k1c) {};  
  { [node distance=0.5em and 1.6em]
  \node[andg, inner sep=0.4em,at=(k1c)] (k1x) {}; 
  \node[ipoint, left=of k1x.center] (k1s) {}; 
  \node[ipoint, right=of k1x.center] (k1t) {};   
  \node[ipointlabel,below left=of k1s] (k1sl) {$s$};  
  \node[ipointlabel,below right=of k1t] (k1tl) {$t$};  
  \draw[->] (k1s) -- (k1x);
  \draw[->] (k1x) -- (k1t);
  \node[below=of k1x] (k1y) {}; 
  \draw[rounded corners=2ex,->] (k1t) -- (k1y.center) -- (k1s.south);
  }
}
\end{tikzpicture} $\leftarrow \emptyset \rightarrow $
\begin{tikzpicture}[circuit, node distance=1.4em and 1.4em]
  {\node[ipoint] (k2s) {}; 
  \node[ipoint, right=of k2s] (k2t) {}; 
  \node[ipointlabel,below=of k2s] (k2sl) {$s$};  
  \node[ipointlabel,below=of k2t] (k2tl) {$t$};  
  \path[->] (k2s) edge (k2t);
  \path[->] (k2t) edge[bend left=40] (k2s.south);
  }
\end{tikzpicture}
\end{center}
\noindent where the shared boundary is the empty open-graph, denoted by $\emptyset$.
\end{example}

Extension provides a construction of the rewrite relation $\rewritesto_{\mathbb S}$ for open-graphs. That is, $G \rewritesto_{\mathbb S} H$ precisely when there exists a rule in $\mathbb S$ that can be extended to $\rwrule{G}{H}$.



We now show how rules can be directly combined using the underlying operations on open-graphs. First, note that any rewrite rule $\rwrulew{L}{b_1,b_2}{R}$ has an opposite rewrite $\rwrulew{R}{b_2,b_1}{L}$ given by flipping the span around. Also, for two rewrite rules $\rwrulew{L}{b_1,b_2}{R}$ and $\rwrulew{R}{b_2,b_3}{R'}$, we can, by abuse of notation, assume they are both spans over the same boundary graph, and write $L \rwarrow R \rwarrow R'$ for the rule $\rwrulew{L}{b_1,b_3}{R'}$.

\begin{definition}[Sequential Composition]
  Given rewrite rules $r_1 := \rwrule{L_1}{R_1}$ and $r_2 :=
  \rwrule{L_2}{R_2}$ and a
  merged graph $M := R_1 \mergew{k_1,k_2} L_2$, then the \emph{sequential
    composition} of $r_1$ and $r_2$ at $k_1,k_2$ is the rewrite rule defined by:
  \[
  (\seqcompww{r_1}{k_1,k_2}{r_2}) := \rwrulew{(M[\rwsubst{R_1}{L_1}]_{m_1})}{b_1,b_2}{(M[\rwsubst{L_2}{R_2}]_{m_2})}
  \]
where $m_1$ is the embedding of $R_1$ into $M$, $m_2$ is the
embedding of $L_2$ into $M$, and $(b_1,b_2)$ is the following boundary span induced by two applications of Thm~\ref{thm:rewrite-preserves-boundary}:
\begin{center}
	\begin{tikzpicture}
	\matrix (m) [cdiag] {
	M[\rwsubst{R_1}{L_1}]_{m_1} & \cdot &
	M &
	\cdot & M[\rwsubst{L_2}{R_2}]_{m_2} \\
	& & B & & \\
	};
	\path [arrs]
	  (m-1-2) edge (m-1-1)
	  (m-1-2) edge (m-1-3)
	  (m-1-4) edge (m-1-3)
	  (m-1-4) edge (m-1-5)

	  (m-2-3) edge node {$b_1$} (m-1-1)
	  (m-2-3) edge (m-1-2)
	  (m-2-3) edge (m-1-3)
	  (m-2-3) edge (m-1-4)
	  (m-2-3) edge node [swap] {$b_2$} (m-1-5);
	\end{tikzpicture}
\end{center}
\end{definition}

Sequential composition, unlike extension, is a direct operation on two rewrites to produce a new rewrite. This provides an algorithm for deriving new graphical equations, as we did in \S\ref{sec:motiv}.  Sequential composition is correct in the sense that it does nothing more than $\rewritetrans_{\mathbb S}$.

\begin{theorem}[Soundness] \label{prop:soundness} if
  $(\seqcompww{r_1}{k_1,k_2}{r_2}) := \rwrule{G}{G'}$ is
  a rewrite; then there exists a graph $H$, and monos $m_1$ and $m_2$
  such that $G \stackrel{\exten{r_1}{m_1}}{\longrwarrow} M
  \stackrel{\exten{r_2}{m_2}}{\longrwarrow} G'$.
\end{theorem}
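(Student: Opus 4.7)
The core idea is that $S$-adhesive DPO rewrites are reversible: the DPO diagram witnessing $G = M[\rwsubst{R_1}{L_1}]_{m_1}$ can be read in either direction. Reading it right-to-left gives a rewrite $G \rwarrow M$ via $r_1 = L_1 \rwarrow R_1$, while the second half of the sequential-composition definition already presents $G' = M[\rwsubst{L_2}{R_2}]_{m_2}$ as a forward rewrite by $r_2$. The plan is to exhibit both rewrites, verify each induces a valid extension, and then concatenate.

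For the first rewrite, I would unfold the subtraction $G = M[\rwsubst{R_1}{L_1}]_{m_1}$ into its full DPO, whose middle graph is $M -_{m_1} R_1$ with coboundary $c : B_1 \hookrightarrow M -_{m_1} R_1$, and whose right-hand square yields an induced map $m_1' : L_1 \rightarrow G$. My first task is to verify that $m_1'$ is a valid $S$-adhesive matching. Since $r_1$ is a span of monos, neither $L_1$ nor $R_1$ has isolated points, so $c$ is mono by Thm \ref{lem:subtracts-are-s-po-complements}. The right square of the DPO is then an $S$-adhesive pushout of two monomorphisms, and the standard adhesive argument (pushouts of monos yield monos) gives that $m_1'$ is mono as well.

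Next I would read the DPO in reverse: the left square of the original DPO now plays the role of the right square of a new DPO for $r_1 = L_1 \rwarrow R_1$ with matching $m_1'$. By uniqueness of $S$-adhesive pushout complements (Lemma \ref{lem:adhesive-complement}), the pushout complement of $B_1 \hookrightarrow L_1 \xrightarrow{m_1'} G$ is (up to isomorphism) exactly $M -_{m_1} R_1$, so the right-hand pushout of the reversed DPO computes $G[\rwsubst{L_1}{R_1}]_{m_1'} \cong M$. Hence $\exten{r_1}{m_1'}$ is the rewrite rule $\rwrule{G}{M}$, with boundary span supplied by Thm \ref{thm:rewrite-preserves-boundary}. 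For the second rewrite, the definition of sequential composition already sets $G' = M[\rwsubst{L_2}{R_2}]_{m_2}$ with $m_2 : L_2 \rightarrow M$ a matching by construction, so $\exten{r_2}{m_2}$ is the rewrite rule $\rwrule{M}{G'}$. Concatenating produces $G \stackrel{\exten{r_1}{m_1'}}{\longrwarrow} M \stackrel{\exten{r_2}{m_2}}{\longrwarrow} G'$ as required.

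The main obstacle is the first half: justifying the reverse reading of the DPO. Three ingredients must fit together — monicity of $m_1'$ (from adhesive preservation of monos across a pushout of monos), uniqueness of $S$-adhesive pushout complements to identify the middle graph of the reversed DPO with $M -_{m_1} R_1$, and the dual interpretation of the same two pushout squares. Once this is done, boundary preservation (Thm \ref{thm:rewrite-preserves-boundary}) ensures the extensions are well-formed rewrite rules whose boundaries are compatible, so the concatenation is a legitimate sequence of rewrites.
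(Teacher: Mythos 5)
Your proposal is correct and follows essentially the same route as the paper: take $M := R_1 \mergew{k_1,k_2} L_2$, read off $m_2$ as the embedding of $L_2$, and recover $M$ from $G$ by reversing the DPO for $r_1$, using the fact that subtractions are ($S$-adhesive) pushout complements and are unique. The paper compresses the reversibility step into one sentence; your version spells out the same argument (monicity of the induced matching, uniqueness of the complement, and the dual reading of the two squares) in full.
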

\begin{proof}
  Let $r_1 := \rwrule{L_1}{R_1}$ and $r_2 :=
  \rwrule{L_2}{R_2}$. Let $M$ be exactly $R_1 \mergew{k_1,k_2}
  L_2$. The embedding of $L_2$ into $M$ defines $m_2$. Thus what we have left to prove is that there is an $m_1$ such
  that
  $M[\rwsubst{R_1}{L_1}]_{m_1'}[\rwsubst{L_1}{R_1}]_{m_1}
  \cong M$, where $m_1'$ is the embedding of $R_1$ into $M$. This follows directly from expanding the equation into
  subtractions and mergings, and then recalling that subtractions are
  pushout complements.
\end{proof}

Sequential composition of rewrites is also complete in the sense that 
many rewrites under various extensions can also be represented as the
sequential composition of the rewrites under a single extension.

\begin{theorem}[Completeness]
  if $M_1 \stackrel{\exten{r_1}{m_1}}{\longrwarrow} M_2
  \stackrel{\exten{r_2}{m_2}}{\longrwarrow} M_3$ then there exists
  an $m'$ and $k_1,k_2$ such that $\exten{(\seqcompww{r_1}{k_1,k_2}{r_2})}{m'} : M_1
  \rwarrow M_3$
\end{theorem}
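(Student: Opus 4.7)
The plan is to recover the common subgraph over which the two rewrites ``interact'' and use it as the merging span for sequential composition. Write $m_1' : R_1 \hookrightarrow M_2$ for the canonical embedding of $R_1$ into $M_2 = M_1[\rwsubst{L_1}{R_1}]_{m_1}$ arising from the double-pushout construction. Since $m_1'$ and $m_2$ are both monos in $\catOGraph$ into the same open-graph, I would form their intersection $K$ as a pullback in the ambient adhesive category $\catGraphSlice$. Because monos in $\catOGraph$ are full on vertices (Lem~\ref{lem:mono-injections}), any vertex shared by $R_1$ and $L_2$ inside $M_2$ drags along all its adjacent edges, so the pullback inclusions $k_1 : K \hookrightarrow R_1$ and $k_2 : K \hookrightarrow L_2$ are themselves full on vertices and hence monos in $\catOGraph$.

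Next I would check that the span $(k_1, k_2)$ is boundary-coherent. If some $p \in K$ satisfied $k_1(p) \in \In(R_1)$ and $k_2(p) \in \In(L_2)$, then the common image in $M_2$ would have no in-edge from either subgraph; since $R_1$ and $L_2$ collectively cover its neighbourhood in $M_2$, that point would itself be an input of $M_2$, contradicting that it lies in the image of the first rewrite's coboundary. The dual argument handles outputs. With boundary-coherence in hand, I can form the merging $M := R_1 \mergew{k_1,k_2} L_2$; the universal property gives a unique mono $\iota : M \hookrightarrow M_2$ that identifies $M$ with the union of the two subgraphs inside $M_2$.

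Now let $r := \seqcompww{r_1}{k_1,k_2}{r_2}$, whose LHS is $L := M[\rwsubst{R_1}{L_1}]_{\rho_1}$ for $\rho_1$ the embedding of $R_1$ into $M$. To construct the matching $m' : L \hookrightarrow M_1$, I would apply Thm~\ref{thm:plugging-rewrite-compat}: $\iota$ exhibits $M_2$ as the plugging of $M$ with its complementary context, so rewriting $R_1 \to L_1$ locally inside $M$ corresponds, via Thms~\ref{thm:plugging-and-subtraction} and~\ref{thm:pushout-and-rewrite}, to performing the same rewrite inside $M_2$; uniqueness of pushout complements (Lem~\ref{lem:unique-pushout-complements}) identifies the result with $M_1$, delivering $m'$. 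Applying $\exten{r}{m'}$ to $M_1$ then performs, in one step, what a further application of Thm~\ref{thm:plugging-rewrite-compat} shows to equal the sequential application of $\exten{r_1}{m_1}$ followed by $\exten{r_2}{m_2}$, yielding $M_3$. The principal obstacle is the bookkeeping needed to verify boundary-coherence of the pullback span and that $m'$ inherits the $S$-adhesive pushout complement property: both rest on showing that pullbacks of $\catOGraph$-monos stay inside $\catOGraph$ and that the double-pushout squares of the two individual rewrites compose along $\iota$ in a coherent way.
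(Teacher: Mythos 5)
Your overall route is the same as the paper's: take $K$ to be the overlap of the images of $R_1$ and $L_2$ in $M_2$, use the resulting span $(k_1,k_2)$ to form $M := R_1 \mergew{k_1,k_2} L_2$ sitting inside $M_2$, and then transport the composite rule's left-hand side back to $M_1$ via compatibility of rewriting with plugging. The paper phrases that last step as the explicit decomposition $M_2 \cong (R_1 \plugw{p_1,p_2} L_2') \plugw{q_1,q_2} M_2'$ with $L_2' := L_2 - K$, which is just the concrete form of your appeal to Theorems \ref{thm:plugging-and-subtraction} and \ref{thm:pushout-and-rewrite}; that part of your argument is fine, and your identification of $m'$ agrees with the paper's.

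The step that does not hold is your verification of boundary coherence. Fullness on vertices constrains only \emph{vertices}, so $R_1$ and $L_2$ need not ``collectively cover'' the neighbourhood of a shared \emph{edge-point}; and even if the shared point were an input of $M_2$, that is no contradiction---an input of $M_2$ can perfectly well lie in the image of both matchings. In fact the overlap span can genuinely fail the condition of Definition \ref{def:boundary-coherent}: if an input edge-point $x$ of $M_2$ lies in the images of both $R_1$ and $L_2$, then $x \in \In(K)$ and both $k_1(x)$ and $k_2(x)$ are inputs. (The paper's proof asserts boundary coherence at the same spot without argument, so it shares this gap.) What actually makes the construction go through is that $(k_1,k_2)$ is the intersection span of two subgraphs of $M_2$ whose inclusions are full on vertices: its pushout in $\catGraphSlice$ is the union $R_1 \cup L_2 \subseteq M_2$, which is again an open-graph and whose inclusions are again full on vertices, so the square is an $S$-adhesive pushout reflected into $\catOGraph$ regardless of boundary coherence. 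If you replace the coverage argument by this observation (or restrict the claim of boundary coherence to what is actually needed), the remainder of your proof stands and matches the paper's.
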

\begin{proof}
  Let $r_1 := \rwrule{L_1}{R_1}$ and $r_2 :=
  \rwrule{L_2}{R_2}$. There is a matching of both $R_1$ and
  $L_2$ in $M_2$. The overlap of these matchings forms a graph $K$ which defines the
  boundary coherent pair $k_1,k_2$, of $K$ into $R_1$ and $L_2$ respectively. We then have $M_2 \cong (R_1 \plugw{p_1,p_2} L_2') \plugw{q_1,q_2} M_2'$, where $L_2' := L_2
  - K$ and $M_2' := (M_2 - R_1) - L_2'$. Thus $M_1 \cong (L_1 \plugw{p_1,p_2} L_2')
  \plugw{q_1,q_2} M_2'$, and $m'$ is simply the embedding of $L_1 \plugw{p_1,p_2} L_2'$
  into $M_1$.
\end{proof}

\subsection{Edge-Homeomorphism} \label{sec:homeomorphism}

Although we have defined everything discreetly so far, open-graphs admit a topological interpretation. Edges can be thought of as copies of the unit interval $[0,1] \subset \mathbb R$, considered as an oriented manifold. Vertices are distinguished points, to which we ascribe semantic meaning, and edges represent ``gluing'' intervals end-to-end, or gluing a vertex on to one edge of an interval. We now briefly elaborate on this idea before introducing a rewrite rule to act in a way analogously to homeomorphism.

\begin{definition}\label{def:wire}
	For an open-graph $G$, a \emph{wire} $W$ in $G$ is a set of connected edge-points, which contains at least one edge, and may also include vertices at its start and end. If a vertex is connected to either end of $W$ in $G$, it is called an \emph{endpoint} of $W$.
\end{definition}

As graphs, wires can be chains or circles. For any wire $W$, we can define an (oriented) manifold $M(W)$ as a quotient over the disjoint union of real unit intervals $\coprod [0,1]_e$, indexed by the edges $e$ in $W$. Whenever there are two edges $e_1$ and $e_2$ in $W$ where $t(e_1) = s(e_2)$, we identify $1 \in [0,1]_{e_1}$ with $0 \in [0,1]_{e_2}$. The unit intervals $[0,1]_e$ then form a collection of charts for $M(W)$ and give an orientation, so $M(W)$ forms an oriented manifold.

\begin{definition}\label{def:homeo-with-manifolds}
	Two graphs $G$ and $G'$ are called \emph{edge-homeomorphic} if $G'$ can be obtained from $G$ by replacing any wire $W$ with a new wire $W'$ where there exists a homeomorphism of oriented manifolds $M(W) \cong M(W')$.
\end{definition}

This topological intuition is encoded discretely in open-graphs as a rewrite system called \emph{edge-homeomorphism}.  

\begin{definition}[Edge-Homeomorphism]
  \label{def:homeo-rewrite-system}
  The following rewrite system is called \emph{edge-homeomorphism} and denoted by $\mathbb H$:
\begin{center}
\begin{tabular}{rrclcrrcl}
$H_L :=$ & \begin{tikzpicture}[circuit]
	\begin{pgfonlayer}{nodelayer}
		\node [style=ipoint] (0) at (-2, 1.5) {};
		\node [style=ipoint] (1) at (-2, 0.75) {};
		\node [style=ipoint] (2) at (-2, 0) {};
	\end{pgfonlayer}
	\begin{pgfonlayer}{edgelayer}
		\draw[dline] (1) to (2);
		\draw[dline] (0) to (1);
	\end{pgfonlayer}
\end{tikzpicture}} & $\rwarrow$ & \begin{tikzpicture}[circuit]
	\begin{pgfonlayer}{nodelayer}
		\node [style=ipoint] (0) at (-2, 1.5) {};
		\node [style=ipoint] (1) at (-2, 0.75) {};
	\end{pgfonlayer}
	\begin{pgfonlayer}{edgelayer}
		\draw[dline] (0) to (1);
	\end{pgfonlayer}
\end{tikzpicture}} 
& & $H^{n,m}_S :=$ & \begin{tikzpicture}[circuit]
	\begin{pgfonlayer}{nodelayer}
		\node [style=brace] (0) at (-2, 2.25) {$\overbrace{\quad}^n$};
		\node [style=ipoint] (1) at (-2.5, 1.75) {};
		\node [style=ipoint] (2) at (-1.5, 1.75) {};
		\node [style=ellipses] (3) at (-2, 1.5) {$\hdots$};
		\node [style=valg] (4) at (-2, 1) {$\ $};
		\node [style=ipoint] (5) at (-1.25, 1) {};
		\node [style=ipoint] (6) at (-0.5, 1) {};
		\node [style=ellipses] (7) at (-2, 0.5) {$\hdots$};
		\node [style=ipoint] (8) at (-2.5, 0.25) {};
		\node [style=ipoint] (9) at (-1.5, 0.25) {};
		\node [style=brace] (10) at (-2, -0.25) {$\underbrace{\quad}_m$};
	\end{pgfonlayer}
	\begin{pgfonlayer}{edgelayer}
		\draw[dline] (5) to (6);
		\draw[dline] (1) to (4);
		\draw[dline] (4) to (5);
		\draw[dline] (2) to (4);
		\draw[dline] (4) to (8);
		\draw[dline] (4) to (9);
	\end{pgfonlayer}
\end{tikzpicture}} & $\rwarrow$ & \begin{tikzpicture}[circuit]
	\begin{pgfonlayer}{nodelayer}
		\node [style=brace] (0) at (-2, 2.25) {$\overbrace{\quad}^n$};
		\node [style=ipoint] (1) at (-2.5, 1.75) {};
		\node [style=ipoint] (2) at (-1.5, 1.75) {};
		\node [style=ellipses] (3) at (-2, 1.5) {$\hdots$};
		\node [style=valg] (4) at (-2, 1) {$\ $};
		\node [style=ipoint] (5) at (-1.25, 1) {};
		\node [style=ellipses] (6) at (-2, 0.5) {$\hdots$};
		\node [style=ipoint] (7) at (-2.5, 0.25) {};
		\node [style=ipoint] (8) at (-1.5, 0.25) {};
		\node [style=brace] (9) at (-2, -0.25) {$\underbrace{\quad}_m$};
	\end{pgfonlayer}
	\begin{pgfonlayer}{edgelayer}
		\draw[dline] (4) to (8);
		\draw[dline] (1) to (4);
		\draw[dline] (2) to (4);
		\draw[dline] (4) to (5);
		\draw[dline] (4) to (7);
	\end{pgfonlayer}
\end{tikzpicture}} 
\\
& & & & & & & & \\
$H_C :=$ & \begin{tikzpicture}[circuit]
		\node [style=ipoint] (0) at (-2.75, 2) {};
		\node [style=ipoint] (1) at (-2.75, 1) {};
		\draw[looseness=1.50, dline, bend left] (1) to (0);
		\draw[looseness=1.50, dline, bend left] (0) to (1);
\end{tikzpicture}}  & $\rwarrow$ & \begin{tikzpicture}[circuit]
		\node [style=ipoint] (0) at (0, 0.25) {};
		\node [style=none] (1) at (0, -0.25) {};
		\draw[in=180, out=180, looseness=2.00, dline] (1.center) to (0);
		\draw[in=0, out=0, looseness=2.00, shorten <=0 pt, shorten >=-0.1 pt, line] (0) to (1.center);
\end{tikzpicture}} & & 
$H^{n,m}_T :=$ & \begin{tikzpicture}[circuit]
	\begin{pgfonlayer}{nodelayer}
		\node [style=brace] (0) at (-2, 2.25) {$\overbrace{\quad}^n$};
		\node [style=ipoint] (1) at (-2.5, 1.75) {};
		\node [style=ipoint] (2) at (-1.5, 1.75) {};
		\node [style=ellipses] (3) at (-2, 1.5) {$\hdots$};
		\node [style=ipoint] (4) at (-3.5, 1) {};
		\node [style=ipoint] (5) at (-2.75, 1) {};
		\node [style=valg] (6) at (-2, 1) {$\ $};
		\node [style=ellipses] (7) at (-2, 0.5) {$\hdots$};
		\node [style=ipoint] (8) at (-2.5, 0.25) {};
		\node [style=ipoint] (9) at (-1.5, 0.25) {};
		\node [style=brace] (10) at (-2, -0.25) {$\underbrace{\quad}_m$};
	\end{pgfonlayer}
	\begin{pgfonlayer}{edgelayer}
		\draw[dline] (2) to (6);
		\draw[dline] (5) to (6);
		\draw[dline] (6) to (8);
		\draw[dline] (1) to (6);
		\draw[dline] (4) to (5);
		\draw[dline] (6) to (9);
	\end{pgfonlayer}
\end{tikzpicture}} & $\rwarrow$ & \begin{tikzpicture}[circuit]
	\begin{pgfonlayer}{nodelayer}
		\node [style=brace] (0) at (-2, 2.25) {$\overbrace{\quad}^n$};
		\node [style=ipoint] (1) at (-2.5, 1.75) {};
		\node [style=ipoint] (2) at (-1.5, 1.75) {};
		\node [style=ellipses] (3) at (-2, 1.5) {$\hdots$};
		\node [style=ipoint] (4) at (-2.75, 1) {};
		\node [style=valg] (5) at (-2, 1) {$\ $};
		\node [style=ellipses] (6) at (-2, 0.5) {$\hdots$};
		\node [style=ipoint] (7) at (-2.5, 0.25) {};
		\node [style=ipoint] (8) at (-1.5, 0.25) {};
		\node [style=brace] (9) at (-2, -0.25) {$\underbrace{\quad}_m$};
	\end{pgfonlayer}
	\begin{pgfonlayer}{edgelayer}
		\draw[dline] (5) to (8);
		\draw[dline] (1) to (5);
		\draw[dline] (4) to (5);
		\draw[dline] (2) to (5);
		\draw[dline] (5) to (7);
	\end{pgfonlayer}
\end{tikzpicture}} 
\end{tabular}
\end{center}
\end{definition}

Applying edge-homeomorphism rewrites to a graph, from left to right,
is called \emph{contracting}. Applying them from right to left
is called \emph{expanding}. Edge-homeomorphism allows
arbitrarily many edge-points to be inserted and removed from paths of
connected edge-points. If $G$ rewrites to $H$ using zero or more edge
homeomorphism rewrites, we say $H$ is an \emph{edge contraction} of
$G$.

\begin{lemma}
	The rewrite system $\mathbb H$ is confluent and terminating.
\end{lemma}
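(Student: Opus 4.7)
The plan is to establish termination first, then use Newman's lemma to reduce confluence to local confluence.

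For termination, I would use the number of edge-points $|P_G^\epsilon| := |\{p \in P_G : \tau(p) = \epsilon\}|$ as a strictly decreasing measure. Each of the four rules $H_L$, $H_C$, $H_S^{n,m}$, $H_T^{n,m}$, when applied left-to-right, removes at least one edge-point from a wire (the left-hand sides contain strictly more edge-points than the corresponding right-hand sides). Since any application of an $\mathbb{H}$-rewrite preserves the underlying set of vertices and strictly decreases $|P_G^\epsilon| \in \mathbb{N}$, the rewrite relation $\rewritesto_{\mathbb{H}}$ is well-founded. Note that by Thm~\ref{thm:rewrite-preserves-boundary}, the boundary is preserved, so these rewrites are legitimately applicable throughout.

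For confluence, by Newman's lemma it suffices to check local confluence: whenever $G_1 \leftarrwarrow_{\mathbb H} G \rewritesto_{\mathbb H} G_2$, there exists $H$ with $G_1 \rewritetrans_{\mathbb H} H \leftarrwarrowstar_{\mathbb H} G_2$. Since the left-hand side of each rule is supported on a short segment of edge-points along a single wire, any two matchings $m_1 : L_1 \hookrightarrow G$ and $m_2 : L_2 \hookrightarrow G$ fall into one of two cases: (i) the images $m_1[L_1]$ and $m_2[L_2]$ are disjoint, in which case Thm~\ref{thm:plugging-rewrite-compat} immediately gives that the two rewrites commute, producing a common reduct in one step each; or (ii) the images overlap along a common wire. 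In case (ii), I would enumerate critical pairs by noting that any two overlapping matchings must lie inside a single wire $W$ of $G$, and classify by the topological type of $W$ (chain with two vertex endpoints; chain with one or two boundary endpoints; circle), which correspond exactly to the four rule schemas. In each case, the overlap involves at most three consecutive edge-points, and one can directly exhibit a single common reduct obtained by maximally contracting $W$ in both branches.

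The main obstacle will be the critical-pair bookkeeping in case (ii): one must check that when two overlapping $\mathbb H$-matchings both apply, the rewrites still join, rather than producing genuinely distinct normal forms. The key observation that makes this routine rather than delicate is that each wire $W$ has, up to isomorphism, a \emph{unique} fully contracted form determined by its topological type (its endpoints and whether it is a circle), so every maximal reduction sequence on $W$ ends at the same graph. Combining this per-wire uniqueness with case (i), which handles rewrites on distinct wires, gives local confluence on all of $\catOGraph$. Newman's lemma together with termination then yields confluence.
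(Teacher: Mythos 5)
Your proposal is correct and takes essentially the same approach as the paper: the paper also proves termination by observing that each contraction strictly decreases the number of edge-points, and confluence by observing that any two contractions join regardless of the order in which they are applied. The only difference is one of care rather than of route --- the paper asserts flatly that ``there are no critical pairs,'' whereas you correctly acknowledge that overlapping matchings along a single wire do occur (e.g.\ two overlapping line contractions on three consecutive edge-points) and dispose of them via the unique fully-contracted form of each wire, which is a more honest rendering of the same observation.
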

\begin{proof}
  Termination comes from observing that each
  contraction of a morphism decreases the number of
  edge-points. Confluence comes from observing that any two
  contractions result in isomorphic graphs independently of the order
  they are applied (there are no critical pairs).
\end{proof}

Considering graphs modulo edge-homeomorphism corresponds to ignoring the intermediate edge-points. Returning to Example~\ref{ex:circle-rewrite}, the resulting circle with two edge-points can now be contracted to a circle with a single edge-point.

\section{Typed Open-Graphs}
\label{sec:graph-lang}

We now generalise our definition of open-graph by showing how it can be parametrised by a `graphical signature' to form a notion of typed open-graphs. The graphical signature defines the types and arities of vertices, as well as the types of edges which can be used. This generalised construction makes use of more sophisticated type-graphs which can themselves be embedded into the basic case of open-graphs. This lets us build a selective adhesive functor through which rewriting properties are inherited in typed open-graphs.

\begin{definition}
For a fixed set $O$, let $O^*$ be the set of finite
lists of $O$. For another set $A$, a function $T :
A \rightarrow O^* \times O^*$ is called a
\emph{graphical signature}. $T$ should be thought of as a function
assigning input and output types to each element in $ A$. 
\end{definition}

\begin{example}
For instance, a function $T$ defined as
\[
T :: \begin{cases}
 \textrm{f} & \mapsto \left(\,
    \textsf{[ A, B, C ]}, \textsf{[ F, G ]}\,\right) \\
 \textrm{g} & \mapsto \left(\, \textsf{[ E ]}, \textsf{[ B ]}\, \right) \\
\end{cases}
\]
can be visualised as a set of ``boxes'':

\begin{equation}\label{fig:generators}
\beginpgfgraphicnamed{generators}
\begin{tikzpicture}
	\begin{pgfonlayer}{nodelayer}
		\node [style=elabel, font=\small \sf] (0) at (-1.25, 1) {A};
		\node [style=elabel, font=\small \sf] (1) at (-0.75, 1) {B};
		\node [style=elabel, font=\small \sf] (2) at (-0.25, 1) {C};
		\node [style=elabel, font=\small \sf] (3) at (0.5, 1) {E};
		\node [style=elabel, font=\small \sf] (4) at (1, 1) {E};
		\node [style=none] (5) at (-1, 0.25) {};
		\node [style=none] (6) at (-0.75, 0.25) {};
		\node [style=none] (7) at (-0.5, 0.25) {};
		\node [style=none] (8) at (0.5, 0.25) {};
		\node [style=none] (9) at (1, 0.25) {};
		\node [style=none] (10) at (-2.5, 0) {$T := $};
		\node [style=none] (11) at (-1.75, 0) {$\left\{\vphantom{\left.\right\}\int^{\int^{\int^{\int}}}}\right.$};
		\node [style=square box, minimum width=8 mm] (12) at (-0.75, 0) {f};
		\node [style=square box, minimum width=8 mm] (13) at (0.75, 0) {g};
		\node [style=none] (14) at (1.75, 0) {$\left.\vphantom{\left\{\right.\int^{\int^{\int^{\int}}}}\right\}$};
		\node [style=none] (15) at (-1, -0.25) {};
		\node [style=none] (16) at (-0.5, -0.25) {};
		\node [style=none, anchor=west] (17) at (-0.25, -0.25) {,};
		\node [style=none] (18) at (0.75, -0.25) {};
		\node [style=elabel, font=\small \sf] (19) at (-1, -1) {D};
		\node [style=elabel, font=\small \sf] (20) at (-0.5, -1) {E};
		\node [style=elabel, font=\small \sf] (21) at (0.75, -1) {B};
	\end{pgfonlayer}
	\begin{pgfonlayer}{edgelayer}
		\draw[bend left=15, diredge] (2) to (7.center);
		\draw[diredge] (18.center) to (21);
		\draw[bend right=15, diredge] (0) to (5.center);
		\draw[diredge] (15.center) to (19);
		\draw[diredge] (4) to (9.center);
		\draw[diredge] (16.center) to (20);
		\draw[diredge] (3) to (8.center);
		\draw[diredge] (1) to (6.center);
	\end{pgfonlayer}
\end{tikzpicture}}
\endpgfgraphicnamed
\end{equation}
\end{example}

\begin{remark}
	Graphical signatures are essentially what Selinger calls a \emph{monoidal signature} \cite{selinger2009survey} and Joyal and Street call a \emph{tensor scheme} \cite{Joyal:1991p1143}. We shall see in \S\ref{sec:monoidal-theories} the relationship between these maps and the construction of free monoidal categories.
\end{remark}

For a graphical signature $T$, we can form a \emph{typegraph} \typegraph{} as follows. It has as vertices $O + A$, where every $o \in O$ has a self-loop. For $a \in A$, $T(a)$ is a pair of words $D, C$, defining the domain and codomain of $a$; in particular defining the types of the inputs and outputs of $a$ respectively. For each $d$ in $D$, \typegraph{} has an edge from $d$ to $a$. For each $c$ in $C$, \typegraph{} has an edge from $a$ to $c$. Note that the in-edges and out-edges of each vertex in \typegraph{} have a natural total order given by their word order.

\begin{example}
	$T$ defined as in (\ref{fig:generators}) defines the typegraph \typegraph{}:
	\begin{center}
\beginpgfgraphicnamed{typegraph}
\begin{tikzpicture}[dline, mcgraph]
	\begin{pgfonlayer}{nodelayer}
		\node [style=valg, minimum width=7 mm] (0) at (-1, 0.75) {f};
		\node [style=valg, minimum width=7 mm] (1) at (1, 0.75) {g};
		\node [style=valg] (2) at (-2, -0.75) {A};
		\node [style=valg] (3) at (-1, -0.75) {B};
		\node [style=valg] (4) at (0, -0.75) {C};
		\node [style=valg] (5) at (1, -0.75) {D};
		\node [style=valg] (6) at (2, -0.75) {E};
	\end{pgfonlayer}
	\begin{pgfonlayer}{edgelayer}
		\draw[out=225, in=315, loop] (6) to ();
		\draw (6) to (1);
		\draw[out=225, in=315, loop] (4) to ();
		\draw (0) to (5);
		\draw[out=225, in=315, loop] (5) to ();
		\draw[looseness=0.75, bend right] (6) to (1);
		\draw (3) to (0);
		\draw (0) to (6);
		\draw[out=225, in=315, loop] (2) to ();
		\draw (2) to (0);
		\draw (1) to (3);
		\draw (4) to (0);
		\draw[out=225, in=315, loop] (3) to ();
	\end{pgfonlayer}
\end{tikzpicture}}
\endpgfgraphicnamed
	\end{center}
\end{example}

\begin{definitions}[Typegraph Notation]
	For a \typegraph-graph $(G, \tau)$, points $p \in \tau^{-1}(O)$ are called \emph{edge-points}. All other points are called \emph{vertices}.
\end{definitions}

For such graphs, we want to have a property even stronger than fullness on vertices. Whereas in the previous section, adjacent edges of $f(v)$ only had to be \emph{covered} by $f$, here they must be in 1-to-1 correspondence with the adjacent edges of $v$. For some vertex $v$ in $G$, we call the set of adjacent edges $N(v)$ its \emph{edge neighbourhood}, and define local isomorphism as follows:

\begin{definition}[Local Isomorphism] \label{def:local-iso}
	A map $f : G \rightarrow H$ is called a \emph{local isomorphism}, for every vertex $v \in G$, the edge function of $f$ restricts to an bijection $f^v : N(v) \overset{\sim}{\rightarrow} N(f(v))$.
\end{definition}

In particular, we can regard the type map $\tau : G \rightarrow \typegraph$ as an arrow from $\tau$ to $1_{\typegraph}$ in the slice category \catTGSlice{}, and ask that it be a local isomorphism.

%

Let $(\catTGSlice)_{\cong}$ be the subcategory of \catTGSlice{} whose objects are pairs $(G, \tau : G \rightarrow \typegraph)$ where $\tau$ is a local isomorphism, and whose arrows are local isomorphisms. We can show this subcategory is in fact full.

\begin{lemma} \label{lem:local-iso-colimit}
	$(\catTGSlice)_{\cong}$ is a full subcategory of \catTGSlice{}.
\end{lemma}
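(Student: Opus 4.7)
The plan is to show that any $\catTGSlice$-morphism $f : (G, \tau_G) \to (H, \tau_H)$ between objects whose typing maps are local isomorphisms is automatically itself a local isomorphism. Fullness of the subcategory then follows immediately from the definition of $(\catTGSlice)_{\cong}$, since the two classes (arrows and objects) of $(\catTGSlice)_{\cong}$-data are determined by the same ``local iso'' condition.

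To carry this out I would fix a vertex $v \in G$ and track what happens on edge neighborhoods. Because $f$ is a graph morphism in $\catGraph$, it sends an edge incident to $v$ to an edge incident to $f(v)$, so its edge component restricts to a function $f^v : N(v) \to N(f(v))$. The hypotheses give that the restrictions $\tau_G^v : N(v) \to N(\tau_G(v))$ and $\tau_H^{f(v)} : N(f(v)) \to N(\tau_H(f(v)))$ are bijections. The slice-commutativity $\tau_H \circ f = \tau_G$ forces $\tau_H(f(v)) = \tau_G(v)$, and restricts on edges to the equation
\[
\tau_H^{f(v)} \circ f^v \;=\; \tau_G^v.
\]
Since $\tau_G^v$ is a bijection and $\tau_H^{f(v)}$ is a bijection, a routine two-out-of-three argument in $\catSet$ gives that $f^v$ is a bijection. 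As $v$ was arbitrary, $f$ is a local isomorphism, which is exactly what is needed.

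The only real obstacle is ensuring the bookkeeping of edge-neighborhoods is correct: I need to check that $f^v$ is well-defined (i.e.\ that $f$ really carries $N(v)$ into $N(f(v))$, which is just preservation of source and target maps) and that the commuting triangle in $\catTGSlice$ genuinely restricts to the above equation on $N(v)$. Once these unpacking steps are clean, nothing else is required beyond the observation that bijections satisfy two-out-of-three.
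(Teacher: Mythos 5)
Your proposal is correct and follows essentially the same route as the paper's proof: restrict the slice triangle $\tau_H \circ f = \tau_G$ to edge neighbourhoods and conclude that $f^v$ is a bijection by two-out-of-three, since $\tau_G^v$ and $\tau_H^{f(v)}$ are. The bookkeeping you flag (that $f$ carries $N(v)$ into $N(f(v))$ and that the triangle restricts correctly) is exactly what the paper also takes as routine.
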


\begin{proof}
	Let $(G, \tau_G)$, $(H, \tau_H)$ be \typegraph-graphs, where $\tau_G$ and $\tau_H$ are both local isomorphisms. For any $f : (G,\tau_G) \rightarrow (H,\tau_H)$ in \catTGSlice, the following diagram commutes:
	\begin{center}
		\ctri{G}{\typegraph}{H}{\tau_G}{f}{\tau_H}
	\end{center}
	
	Thus, for any $v$ in $G$ we get this triangle in \catSet:
	\begin{center}
		\ctri{N(v)}{N(\tau_G(v))}{N(f(v))}{\tau_G^v}{f^v}{\tau_H^{f(v)}}
	\end{center}
	
	Since $\tau_G^v$ and $\tau_H^{f(v)}$ are both bijections, $f^v$ is a bijection, so $(\catTGSlice)_{\cong}$ is a full subcategory.	
	%
	%
\end{proof}

Note that for any \typegraph{}, there is a graph homomorphism $\kappa
: \typegraph \rightarrow 2_{\mathcal G}$ sending every point in
$O$ to $\epsilon$ and every point in $A$ to
$V$. Post-composing each object in $\catTGSlice$ with $\kappa$ yields the forgetful functor:
\[ U_{\kappa} : \catTGSlice \rightarrow \catGraphSlice. \]
In particular, this sends an object $\tau : G \rightarrow \typegraph$ in \catTGSlice{} to an object $\kappa \circ \tau$ in \catGraphSlice{}.

\begin{definition}[Open \typegraph-graph] \label{def:open-t-graph}
	A \typegraph-graph $G$ is called an open \typegraph-graph if $U_\kappa(G) \in \catGraphSlice$ is an open-graph. The category $\catOGraphTG$ the full subcategory of $(\catTGSlice)_{\cong}$ whose objects are open-graphs.
\end{definition}

Note that local isomorphisms are, in particular, full on vertices, so the forgetful functor $U_\kappa$ restricts to another functor
\[ U : \catOGraphTG \rightarrow \catOGraph. \]

%

\begin{lemma}\label{lem:ographtg-monos}
	Monos in \catOGraphTG{} are injective maps.
\end{lemma}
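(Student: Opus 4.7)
The plan is to follow the structure of Lemma \ref{lem:mono-injections}, with one extra subtlety: the morphisms of $\catOGraphTG$ are local isomorphisms, which is strictly stronger than being full on vertices. For the $(\Leftarrow)$ direction, any injective morphism is automatically mono, since a morphism in $\catOGraphTG$ is determined by its action on points and edges.

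For the $(\Rightarrow)$ direction, I will suppose $f : G \to H$ is mono in $\catOGraphTG$ but fails injectivity, and exhibit distinct $i_1, i_2 : K \to G$ in $\catOGraphTG$ with $f i_1 = f i_2$. The coproduct-padding trick from the proof of Lemma \ref{lem:mono-injections} does not transfer verbatim, because $[1_G, i_j] : G + K \to G$ will fail to be a local isomorphism at any vertex in the $K$-summand whose full edge neighborhood is not captured by $K$. The workaround is to choose $K$ so that the relevant inclusions are themselves local isomorphisms.

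Concretely, I will split by where $f$ first fails to be injective. If $f(v_1) = f(v_2)$ for distinct vertices $v_1, v_2$ of common type $t = \tau_G(v_1)$, let $K$ be the ``star'' of $t$: a single vertex of type $t$ together with the edge neighborhood $N(t)$ dictated by $\typegraph$. Since $\tau_G$ is a local isomorphism, there are canonical inclusions $i_1, i_2 : K \hookrightarrow G$ at $v_1$ and $v_2$, and these are local isomorphisms. They agree under post-composition with $f$: both send $K$'s vertex to $f(v_1) = f(v_2)$, and by local isomorphism of $\tau_H$ each edge of $K$ is forced to land on the unique edge of $N(f(v_1))$ whose image in $\typegraph$ matches, and similarly for the far edge-point endpoints. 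If $f$ is injective on vertices but identifies two distinct edges $e_1, e_2$, then neither can have a vertex endpoint (otherwise local isomorphism of $f$ at that common-image vertex would equate $e_1$ with $e_2$), so take $K$ to be a single edge with two edge-point endpoints of the appropriate types. If only two edge-points are identified, take $K$ to be an isolated edge-point of the appropriate type.

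In each case $K$ is a valid open $\typegraph$-graph, the maps $i_1, i_2$ are morphisms of $\catOGraphTG$ (the local isomorphism condition is vacuous when $K$ has no vertices and holds by construction when $K$ has its single vertex with full neighborhood), and $i_1 \neq i_2$ while $f i_1 = f i_2$, contradicting the monomorphism assumption. The vertex case is the main obstacle, since one must check that $f i_1$ and $f i_2$ agree not only on the vertex but also on the entire edge-neighborhood of $K$; this is precisely where both the full-neighborhood choice of $K$ and the local isomorphism of $\tau_H$ are used.
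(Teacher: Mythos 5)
Your proof is correct and follows essentially the same strategy as the paper's: reduce non-injectivity to points, and in the vertex case use the full star of a vertex (so that the two test maps $i_1,i_2$ are local isomorphisms) to contradict the mono assumption. You are somewhat more careful than the paper in spelling out why $f i_1 = f i_2$ on the whole neighbourhood and why the coproduct-padding trick of Lemma~\ref{lem:mono-injections} cannot be reused, but these are refinements of the same argument rather than a different route.
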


\begin{proof}
	Suppose $m : G \rightarrow H$ in \catOGraphTG{} is not injective. If $m$ takes two distinct edges $e_1$ and $e_2$ to a single edge, then suppose the source of $e_1$ (and hence of $e_2$) is an edge-point. Then, since $G$ is an open-graph, it must take two edge-points to a single edge-point in $H$. Otherwise, suppose it is a vertex, then by local isomorphism, $m$ must take two distinct vertices on to a single vertex. Thus is suffices to only consider points.
	
	If $m$ takes two distinct vertices $v_1$, $v_2$ in $G$ to a single vertex in $H$, then let $K$ be the subgraph of $G$ consisting of just $v_1$ and its neighbourhood. If $m$ takes two distinct edge-points to a single edge-point in $H$, then let $K$ be a graph consisting of a single edge-point. In either case, there are at least two distinct maps $f,g : K \rightarrow G$ such that $m f = m g$.
\TODO{say what this contradicts?}
\end{proof}

\begin{theorem}\label{thm:ographtg-embedding-functor}
	The embedding functor $S' : \catOGraphTG \hookrightarrow \catTGSlice$ is a selective adhesive functor.
\end{theorem}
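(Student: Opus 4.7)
The plan is to verify each of the four clauses of Definition \ref{def:adhesive-functor} for $S' : \catOGraphTG \hookrightarrow \catTGSlice$, with the main conceptual step being to establish that $\catOGraphTG$ sits as a full subcategory of $\catTGSlice$; once this is in hand, the remaining work is essentially bookkeeping. Concretely, I would first observe that the embedding factors as
\[
\catOGraphTG \hookrightarrow (\catTGSlice)_{\cong} \hookrightarrow \catTGSlice,
\]
where the first inclusion is full by the definition of $\catOGraphTG$ and the second is full by Lemma \ref{lem:local-iso-colimit}. Composing gives fullness of $S'$ itself.

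Faithfulness of $S'$ is then automatic from being an inclusion of a subcategory. Preservation of monomorphisms follows from Lemma \ref{lem:ographtg-monos}: a mono in $\catOGraphTG$ is an injective map on points and edges, and injective morphisms are monic in the functor-category $\catTGSlice$ (monos being computed componentwise in $\catSet$). For creation of isomorphisms, suppose $f : G \to H$ lies in $\catOGraphTG$ and $S'(f)$ is an iso in $\catTGSlice$ with inverse $g$. Since $H$ and $G$ are objects of $\catOGraphTG$ and $\catOGraphTG \hookrightarrow \catTGSlice$ is full, $g$ lies in $\catOGraphTG$; faithfulness of $S'$ then promotes $gf = 1_G$ and $fg = 1_H$ back to $\catOGraphTG$. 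For the object-level creation, if $\phi : S'(G) \xrightarrow{\sim} H'$ is an iso with $G$ an open \typegraph-graph, then $H'$ is also an open \typegraph-graph with local-iso type map (these properties are preserved by isomorphism), so $H'$ is itself an object of $\catOGraphTG$ and the lift is unique.

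The only step that requires any thought is reflection of pushouts. Given a commutative square in $\catOGraphTG$ whose $S'$-image is a pushout in $\catTGSlice$, I would take any cocone $A \to X \leftarrow C$ under the span $A \leftarrow B \to C$ in $\catOGraphTG$. Applying $S'$ gives a cocone in $\catTGSlice$, from which the universal property in $\catTGSlice$ yields a unique mediating map $u : S'(D) \to S'(X)$. Because $D$ and $X$ both lie in $\catOGraphTG$ and the embedding is full, $u$ arises from a (necessarily unique, by faithfulness) mediator $\tilde u : D \to X$ in $\catOGraphTG$. This establishes the universal property in $\catOGraphTG$, so pushouts are reflected. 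The cleanness of this argument shows that the real work has already been done in Lemma \ref{lem:local-iso-colimit}; the harder intuition to verify is that no new pathologies enter when moving from the two-point typegraph $2_\mathcal{G}$ to an arbitrary typegraph $\typegraph$, and indeed the local-isomorphism condition is precisely what rules such pathologies out.
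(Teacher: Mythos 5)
Your proposal is correct and follows essentially the same route as the paper's own (much terser) proof: preservation of monos via Lemma \ref{lem:ographtg-monos}, creation of isomorphisms from invariance of the defining properties under iso, and faithfulness and reflection of pushouts from $S'$ being a full subcategory embedding, with fullness coming from composing the full inclusions $\catOGraphTG \hookrightarrow (\catTGSlice)_{\cong} \hookrightarrow \catTGSlice$ via Lemma \ref{lem:local-iso-colimit}. You have merely spelled out the details that the paper leaves implicit.
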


\begin{proof}
	From Lem \ref{lem:ographtg-monos}, $S'$ preserves monos. Creation of isomorphisms follows from the fact that all isomorphisms are local isomorphisms and the property of being an open-graph is invariant under isomorphism. Faithfulness and reflection of pushouts follows from being a full subcategory embedding.
\end{proof}

\begin{definition}[Boundary-coherence in $\catOGraphTG$]
	A span $A \overset{f}{\leftarrow} B \overset{g}{\rightarrow} C$ in $\catOGraphTG$ is called \emph{boundary-coherent} if its image under $U$ is boundary-coherent in \catOGraph{}.
\end{definition}

\begin{theorem}\label{thm:colimits}
	Boundary-coherent spans in $\catOGraphTG$ are $S'$ adhesive.
\end{theorem}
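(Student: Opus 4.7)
The plan is to reduce the statement to the already-proven result that boundary-coherent spans in $\catOGraph$ are $S$-adhesive, using the forgetful functor $U : \catOGraphTG \to \catOGraph$ together with the ambient adhesive structure of $\catTGSlice$. First I would form the pushout $D$ of the given span in $\catTGSlice$; this is available because $\catTGSlice$ is a slice of the adhesive category $\catGraph$ and the two maps in a boundary-coherent span are monos by Lemma \ref{lem:ographtg-monos}. Applying the colimit-preserving forgetful functor $U_\kappa$ then yields a pushout of the image span in $\catGraphSlice$, and since the image is boundary-coherent in $\catOGraph$ the earlier $S$-adhesive theorem tells us that $U_\kappa(D)$ is an open-graph.

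Next I would verify that $D$ itself lies in $\catOGraphTG$; only the typing $\tau_D : D \to \typegraph$ needs checking, since being an open-graph has already been established. Because the pushout of monos along $B$ amounts to a gluing, every vertex of $D$ either (i) comes uniquely from $A$ outside the image of $f$, (ii) uniquely from $C$ outside the image of $g$, or (iii) is the shared image $f(v') = g(v')$ of a unique vertex $v' \in B$. Cases (i) and (ii) are immediate: the edge-neighbourhood of the vertex in $D$ coincides with its neighbourhood in the source graph, and the local isomorphism property of $\tau_A$ or $\tau_C$ transfers directly. Case (iii) is the heart of the argument: since $f$ and $g$ are local isomorphisms, the full neighbourhoods of $f(v')$ in $A$ and of $g(v')$ in $C$ lie in the images of $f$ and $g$, so the gluing pairs them up via the neighbourhood of $v'$ in $B$, and then the local iso $\tau_B^{v'}$ delivers the required bijection from $N_D(v)$ to the neighbourhood of $\tau_D(v)$ in $\typegraph$.

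Once $D$ is shown to be an object of $\catOGraphTG$, the remainder is formal: by Lemma \ref{lem:local-iso-colimit} the inclusion $\catOGraphTG \hookrightarrow \catTGSlice$ is full, so any cocone in $\catOGraphTG$ is in particular a cocone in $\catTGSlice$, and the unique mediator produced by the pushout in $\catTGSlice$ automatically lies in $\catOGraphTG$. Hence the same diagram is a pushout in $\catOGraphTG$, and $S'$ preserves it by construction. The main obstacle I anticipate is the bookkeeping in case (iii): getting the bijections of edge-neighbourhoods to commute through the pushout requires carefully combining local isomorphism of $f$ and $g$ (to ensure the entire neighbourhoods are covered by the gluing) with local isomorphism of $\tau_B$ (to reach the type graph). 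Boundary coherence enters only indirectly here, as the hypothesis that makes the earlier $S$-adhesive theorem applicable and guarantees $U_\kappa(D)$ is an open-graph in the first place.
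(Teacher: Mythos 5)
Your proposal is correct and follows essentially the same route as the paper: form the pushout in the ambient category, transport it along $U_\kappa$ to identify it with the $S$-adhesive pushout of the underlying boundary-coherent span in $\catOGraph$, and conclude via fullness of $\catOGraphTG$ in $\catTGSlice$. Your explicit case analysis showing that $\tau_D$ is a local isomorphism is a welcome addition --- the paper's proof leaves this implicit by simply asserting the pushout is taken in $(\catTGSlice)_{\cong}$ --- and the only quibble is that the monicity of $f$ and $g$ comes from the definition of boundary coherence rather than from Lemma~\ref{lem:ographtg-monos}, which merely identifies monos with injections.
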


\begin{proof}
	We prove this property by using the two embeddings and two forgetful functors.
	\begin{center}
		\csquare
		{\catOGraphTG}{\catTGSlice}
		{\catOGraph}{\catGraphSlice}
		{S'}{S}{U}{U_\kappa}
	\end{center}
	
	Let $f,g$ be a boundary-coherent span in $\catOGraphTG$, and let the following square be its pushout in $(\catTGSlice)_{\cong}$.
	\begin{center}
		\csquare{S'(A)}{S'(B)}{S'(C)}{D}{S'(f)}{p_1}{S'(g)}{p_2}
	\end{center}
	
	Since $\catOGraphTG$ is a full subcategory of $\catTGSlice$, it suffices to show that $D$ is in $\catOGraphTG$. By definition, $U(f), U(g)$ is boundary-coherent and hence $S$-adhesive in \catOGraph{}, so its pushout $D'$ exists and $S$ preserves it. $S(D')$ is a pushout of
	\[ (SU(f), SU(g)) = (U_\kappa S' (f),U_\kappa S'(g)) \]
	$U_\kappa(D)$ is the pushout of the RHS, so by uniqueness of pushouts, $S(D') \cong U_\kappa(D)$. $D'$ is an open-graph in \catOGraph{}, so $D$ is an open-graph in $\catOGraphTG$.
\end{proof}

Boundary maps are defined as in \catOGraph{}. The construction of subtraction carries over verbatim, and is preserved by $U$. The uniqueness of pushout complements follows from adhesiveness of \catTGSlice{}.

\section{Monoidal Theories}
\label{sec:monoidal-theories}

Plugging gives us a tool for composing graphs. We can take this a step further and discuss composing graphs in a \emph{categorical} sense, using cospan categories over \catOGraph{} or \catOGraphTG{}. For our purposes, we shall focus on the latter.

For a graphical signature $T : A \rightarrow O^* \times O^*$, we construct the category $\DCsp(\catOGraphTG)$ of \emph{directed cospans} as follows. Its objects are words in $O^*$. Equivalently, they are point graphs in \catOGraphTG, where the points are given a total order. An arrow $G : X \rightarrow Y$ is a cospan
\[ Y \overset{c}{\rightarrow} G \overset{d}{\leftarrow} X \]
where $G$ doesn't contain any isolated points, $d$ is the inclusion of $\In(G) \cong X$, and $c$ is the inclusion of $\Out(G) \cong Y$.

$\DCsp(\catOGraphTG)$ forms a symmetric monoidal category. Composition of maps $G : A \rightarrow B$ and $H : B \rightarrow C$ is by pushout, which is boundary-coherent by construction. For a point graph $A$, the identity of $A$ in $\DCsp(\catOGraphTG)$ is the cospan given by the identity of $A$ in \catOGraphTG.
\[ A \overset{1}{\longrightarrow} A \overset{1}{\longleftarrow} A \]

The monoidal product is given by coproducts in \catOGraphTG. For cospans $G : A \rightarrow B$, $H : C \rightarrow D$, $G \otimes H$ is the cospan
\[ B + D \overset{o}{\longrightarrow} G + H
         \overset{i}{\longleftarrow} A + C, \]
where $i$ and $o$ are the induced maps of coproducts.

Symmetries $\sigma_{A,B} : A \otimes B \rightarrow B \otimes A$ are built using the induced swap map $\sigma := [i_2, i_1]$, for $i_1$ and $i_2$ the coproduct injections of $A + B$.
\[ B + A \overset{1}{\longrightarrow} B + A
         \overset{\sigma}{\longleftarrow}  A + B \]

\begin{remark}
	$\DCsp(\catOGraphTG)$ is actually a monoidal 2-category, where composition and the monoidal product are only associative up to isomorphism. It has as objects point-graphs, as 1-cells cospans, and as 2-cells \typegraph-graph morphisms. For our purposes, we will work with the ``strictified'' category, where composition and $\otimes$ are both taken to be strictly associative. By a minor abuse of notion, for cospans $\mathcal G$, $\mathcal H$, $\mathcal G \cong \mathcal H$ should be read as a $2$-cell isomorphism in the (non-strict) 2-category.
\end{remark}

\subsection{Rewrite Categories}
\label{sub:rewrite-categories}

\begin{lemma}\label{thm:rewriting-on-cospans}
	Let the following cospan be an arrow in $\DCsp(\catOGraphTG)$.
	\[ \mathcal G := Y \overset{c}{\longrightarrow} G \overset{d}{\longleftarrow} X \]
	
	Let $m$ be a matching of a rewrite $\rwrule{L}{R}$ on $G$. Then for the induced rewrite
	\begin{equation}\label{dia:span-rewrite-compat}
		G \overset{b_1}{\longleftarrow} B
		\overset{b_2}{\longrightarrow} G[\rwsubst{L}{R}]_m
	\end{equation}
	there exists unique $\widehat d$, $\widehat c$ such that
	\[ Y \overset{\widehat c}{\longrightarrow} G[\rwsubst{L}{R}]_m \overset{\widehat d}{\longleftarrow} X \]
	is an arrow in $\DCsp(\catOGraphTG)$ and the following diagram commutes for some maps $d'$ and $c'$.
	\begin{center}
		\begin{tikzpicture}
			\matrix (m) [cdiag] {
				  &        G        &   \\
				Y &        B        & X \\
				  & G[\rwsubst{L}{R}]_m &   \\
			};
			\path [arrs]
				(m-2-1) edge node {$c$} (m-1-2)
				(m-2-1) edge node {$c'$} (m-2-2)
				(m-2-1) edge node [swap] {$\widehat c$} (m-3-2)
				
				(m-2-2) edge node [swap] {$b_1$} (m-1-2)
				(m-2-2) edge node {$b_2$} (m-3-2)
				
				(m-2-3) edge node [swap] {$d$} (m-1-2)
				(m-2-3) edge node [swap] {$d'$} (m-2-2)
				(m-2-3) edge node {$\widehat d$} (m-3-2);
		\end{tikzpicture}
	\end{center}
\end{lemma}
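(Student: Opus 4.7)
The plan is to use Thm \ref{thm:rewrite-preserves-boundary} to transport the boundary of $G$ to a boundary of the rewritten graph, and then push $d$ and $c$ forward along this induced span.

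First, I would apply Thm \ref{thm:rewrite-preserves-boundary} to obtain a boundary graph $B$ of $G$, the inclusion $b_1 : B \hookrightarrow G$, an intermediate map $k : B \to G -_m L$, and the induced map $b_2 := s' k : B \to G[\rwsubst{L}{R}]_m$. The proof of that theorem establishes that $b_2$ is itself a boundary map of the rewritten graph and is polarity-preserving: inputs of $G$ go to inputs of $G[\rwsubst{L}{R}]_m$ (possibly becoming isolated points, in which case they must originate from inputs of $R$), and dually for outputs. Since $X \cong \In(G)$, $Y \cong \Out(G)$, and $B \cong \In(G) + \Out(G)$, the inclusions $d$ and $c$ factor uniquely through $b_1$ via the canonical coproduct injections $d' : X \to B$ and $c' : Y \to B$, so that $d = b_1 d'$ and $c = b_1 c'$. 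I then set $\widehat d := b_2 d'$ and $\widehat c := b_2 c'$; all four triangles of the diagram commute by construction.

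To check that $Y \xrightarrow{\widehat c} G[\rwsubst{L}{R}]_m \xleftarrow{\widehat d} X$ is a valid arrow in $\DCsp(\catOGraphTG)$, three things are needed: $\widehat d$ realises $X \cong \In(G[\rwsubst{L}{R}]_m)$, $\widehat c$ realises $Y \cong \Out(G[\rwsubst{L}{R}]_m)$, and the rewritten graph has no isolated points. Surjectivity of $\widehat d$ (resp. $\widehat c$) onto the inputs (resp. outputs) follows from $b_2$ covering the entire boundary of the rewritten graph, as established in Thm \ref{thm:rewrite-preserves-boundary}; injectivity then follows from boundary maps being $2$-to-$1$ only at isolated points, combined with the no-isolated-point condition. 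Uniqueness of $\widehat d$ and $\widehat c$ is forced by the cospan conditions, which pin them down as the canonical input/output inclusions.

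The main obstacle I anticipate is precisely the no-isolated-point condition on $G[\rwsubst{L}{R}]_m$: a point of $G$ that was strictly an input must not simultaneously become an input and an output after rewriting. Handling this requires exploiting that rewrite rules are spans of monomorphisms, so neither $L$ nor $R$ contains isolated points, and combining this with the polarity analysis in the proof of Thm \ref{thm:rewrite-preserves-boundary} to rule out a boundary point acquiring the opposite polarity. Once this is nailed down, the rest of the argument reduces to the factorisation bookkeeping described above.
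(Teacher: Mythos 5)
Your proposal follows essentially the same route as the paper's proof: invoke Theorem \ref{thm:rewrite-preserves-boundary} to obtain the boundary span $b_1, b_2$, factor $d$ and $c$ through $b_1$ (the paper does this via the restriction to $\In(G) \cong \In(G[\rwsubst{L}{R}]_m)$, you via the coproduct injections into $B$), and define $\widehat d := b_2 d'$, $\widehat c := b_2 c'$. Your additional attention to the no-isolated-points condition is a detail the paper's proof leaves implicit, but it does not change the argument.
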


\begin{proof}
	Since diagram (\ref{dia:span-rewrite-compat}) is a span of boundary maps, it restricts to a smaller span
	\[ G \overset{b_1'}{\longleftarrow} \In(G)\cong \In(G[\rwsubst{L}{R}]_m) \overset{b_2'}{\longrightarrow} G[\rwsubst{L}{R}]_m  \]
	where $b_1'$ and $b_2'$ are monos. Since the image of $d$ is contained in the image of $b_1'$, it factors uniquely through $b_1'$ as $d = b_1' \circ d'$. Furthermore, $\hat d := b_2' \circ d'$ is the unique map making the above diagram commute.
	The construction follows for $c$ similarly.
\end{proof}

\begin{definition}[Rewriting on Cospans] \label{def:rewrite-cospan}
	For a cospan
	\[ \mathcal G :=
	 Y \overset{c}{\longrightarrow} G \overset{d}{\longleftarrow} X
	\]
	in $\DCsp(\catOGraphTG)$, and a matching $m$ of a rewrite $\rwsubst{L}{R}$ on $G$, we write $\mathcal G[\rwsubst{L}{R}]_m$ for the cospan over $G[\rwsubst{L}{R}]_m$ defined by Lem \ref{thm:rewriting-on-cospans}.
\end{definition}

\begin{theorem}\label{thm:cospan-rewrite-compatiblity}
	Let $\mathcal G : A \rightarrow B$, $\mathcal H : B \rightarrow C$ be cospans in $\DCsp(\catOGraphTG)$, and $m$ be a matching of a rule $\rwrule{L}{R}$ on $G$. Then there exists a matching $m'$ on $\mathcal H \circ \mathcal G$ such that
	\[ \mathcal H \circ (\mathcal G[\rwsubst{L}{R}]_m) \cong
	(\mathcal H \circ \mathcal G)[\rwsubst{L}{R}]_{m'} \]
	Similarly, for any cospan matching $n$ on $\mathcal H$, there exists $n'$ such that
	\[ (\mathcal H[\rwsubst{L}{R}]_n) \circ \mathcal G \cong
	(\mathcal H \circ \mathcal G)[\rwsubst{L}{R}]_{n'} \]
\end{theorem}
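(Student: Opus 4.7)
The plan is to reduce both claims to Theorem~\ref{thm:plugging-rewrite-compat}, since composition in $\DCsp(\catOGraphTG)$ is exactly a plugging along the shared boundary. First I would unfold $\mathcal H \circ \mathcal G$: writing $\mathcal G = (B \overset{c}{\to} G \overset{d_G}{\leftarrow} A)$ and $\mathcal H = (C \overset{c_H}{\to} H \overset{d_H}{\leftarrow} B)$, the composite cospan has apex $G +_{c,d_H} H$. Since $B$ is a point-graph (the objects of $\DCsp(\catOGraphTG)$ are words in $O^*$) and $(c,d_H)$ is boundary-coherent by construction of cospan composition, this pushout is literally the plugging $G \plugw{c,d_H} H$, and the legs of the composite are the induced maps into the pushout.

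For the first claim, given a matching $m : L \to G$, Theorem~\ref{thm:plugging-rewrite-compat} supplies a map $\widehat p$ such that $(\widehat p, d_H)$ is again a plugging and, with $\widehat m := i\,m$ for $i : G \hookrightarrow G \plugw{c,d_H} H$ the pushout injection,
\[
  G[\rwsubst{L}{R}]_m \plugw{\widehat p,\, d_H} H
  \;\cong\;
  (G \plugw{c,d_H} H)[\rwsubst{L}{R}]_{\widehat m}.
\]
I would take $m' := \widehat m$. The right-hand side above is the apex of $(\mathcal H \circ \mathcal G)[\rwsubst{L}{R}]_{m'}$ in the sense of Definition~\ref{def:rewrite-cospan}, while the left-hand side is the apex of $\mathcal H \circ (\mathcal G[\rwsubst{L}{R}]_m)$; so it remains only to check that the two pairs of cospan legs $A \to \cdot \leftarrow C$ agree under this isomorphism.

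To see this, I would use Lemma~\ref{thm:rewriting-on-cospans} applied to $\mathcal G$: the rewritten cospan $\mathcal G[\rwsubst{L}{R}]_m$ has legs $\widehat c, \widehat d_G$ which both factor through a common boundary graph $B'$ via the maps $c', d'$, and $B'$ is identified with $B$ by Theorem~\ref{thm:rewrite-preserves-boundary}. The outer legs of $\mathcal H \circ (\mathcal G[\rwsubst{L}{R}]_m)$ are then obtained by post-composing $\widehat d_G$ with $d_G$'s counterpart on the $H$-side pushout, and these agree with the legs produced by Lemma~\ref{thm:rewriting-on-cospans} applied to $\mathcal H \circ \mathcal G$ — indeed both satisfy the same universal factorisation through the coboundary of the rewritten apex, which is the content of diagram~(\ref{dia:compatible-with-coboundary}) in Theorem~\ref{thm:plugging-and-subtraction}. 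Uniqueness in Lemma~\ref{thm:rewriting-on-cospans} then forces equality. The second claim is entirely symmetric: replace $i$ by the other pushout injection $j : H \hookrightarrow G \plugw{c,d_H} H$ and set $n' := j\,n$.

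The main obstacle I expect is the bookkeeping of boundary and coboundary maps, i.e.\ verifying that the cospan legs produced on the two sides of each isomorphism literally coincide (not merely up to some further isomorphism). The substance of this is already carried by Theorem~\ref{thm:plugging-and-subtraction} and the uniqueness of pushout complements; the remaining work is a diagram chase that threads $\widehat p$, $c'$, and the factorisation $d = b_1' \circ d'$ of Lemma~\ref{thm:rewriting-on-cospans} through the composite pushout, using faithfulness of $S'$ to transfer identifications from $\catTGSlice$ back to $\catOGraphTG$.
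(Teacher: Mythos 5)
Your proposal is correct and follows essentially the same route as the paper: the paper's own proof simply cites Theorem~\ref{thm:plugging-rewrite-compat} and Lemma~\ref{thm:rewriting-on-cospans} and takes $m'$ (resp.\ $n'$) to be the composite of the original matching with the pushout injection, which is exactly your $\widehat m := i\,m$. Your additional care in checking that the cospan legs agree via the factorisation through the boundary graph is a sound elaboration of what the paper leaves implicit.
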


\begin{proof}
	The result follows from Thm \ref{thm:plugging-rewrite-compat} and Lem \ref{thm:rewriting-on-cospans}. In both cases, $m'$ and $n'$ are formed by composing the original mapping with the inclusion of the matched graph into the ($S'$-adhesive) pushout.
\end{proof}

Let $\mathbb S$ be a set of rewrite rules. We write $\mathcal G \rewritesto \mathcal H$ if there exists a rule $\rwsubst{L}{R}$ in $\mathbb S$ and a cospan matching $m$ such that $\mathcal G[\rwsubst{L}{R}]_m \cong \mathcal H$. Let $\rewriteequiv$ be the closure of $\rewritesto$ as an equivalence relation.

Let $\rewriteCat{\mathbb S}$ be the category whose objects are the same as those of $\DCsp(\catOGraphTG)$ and whose arrows are equivalence classes of cospans under the relation $\rewriteequiv$. This category is well-defined because of Thm \ref{thm:cospan-rewrite-compatiblity}, and inherits its symmetric monoidal structure from $\DCsp(\catOGraphTG)$.

Let $\mathbb H$ be the typed version of the edge homeomorphism rewrite system from Def \ref{def:homeo-rewrite-system}. This system consists of a line contraction rule $H_L(o)$ and a circle contraction rule $H_C(o)$ for each $o \in O$. It also has an input contraction rule $H_T^k(a)$ for each $a \in A$ and each input $k \in 1..N$ defined by $T(a)$, and similarly an output contraction rule $H_S^k(a)$. Note that when $A$ and $O$ are finite, this rewrite system is finite, unlike in the untyped case, where it is countably infinite.

A particularly important example of a rewrite category is then $\rewriteCat{\mathbb H}$. Arrows in this category correspond exactly to diagrammatic representations of morphisms in a symmetric monoidal category. Since categories of this form exhibit only the identities of various kinds of monoidal categories, they define \emph{free} categories over a graphical signature $T$.

\subsection{Free Monoidal Categories}

A monoidal precategory $\mathcal P$ consists of a class of objects $\textrm{ob} \mathcal P$.

\begin{definition}[Monoidal Precategory]
	Fix a class $O$ and form the free monoid $\textrm{ob} \mathcal M := O^*$ of words in $O$. A \emph{monoidal precategory} is a class of objects $\textrm{ob} \mathcal M$ and for every pair $v,w \in O^*$ a set $\hom(v,w)$ of arrows. A monoidal prefunctor $F : \mathcal M \rightarrow \mathcal N$ consists of a monoid homomorphism $\textrm{ob} \mathcal M \rightarrow \textrm{ob} \mathcal N$ and for every hom-set a function $\hom(v, w) \rightarrow \hom(Fv, Fw)$. The category of monoidal precategories and monoidal prefunctors is called $\catMonPreCat$.
\end{definition}

Note that monoidal precategories do not necessarily have composition
or identities, and the ``monoidal product'' is only defined for
objects. Monoidal categories and graphical signatures are both cases
of monoidal precategories. In the case of a graphical signature $T : A \rightarrow O^* \times O^*$, the class of objects is $O$ and for any pair of words $v,w \in O^*$, the hom-set is formed from the inverse image of $T$:
\[\hom(v,w) := T^{-1}(v,w) \subseteq A. \]

\begin{definitions}
	Let $\textbf{TSMC}(T) := \rewriteCat{\mathbb H}$. Let $\textbf{SMC}(T)$ be the subcategory of $\textbf{TSMC}(T)$ where every graph in the middle of a cospan is directed acyclic.
\end{definitions}

$\textbf{SMC}(T)$ has the property that no graphs contain ``feedback loops''. Note that $T$, as a monoidal precategory, embeds canonically into $\textbf{SMC}(T)$, and hence into $\textbf{TSMC}(T)$.

\begin{theorem}\label{thm:free-smc}
	$\textbf{SMC}(T)$ is the free symmetric monoidal category of $T$. That is, for any symmetric monoidal category $\mathcal V$, any monoidal prefunctor $F : T \rightarrow \mathcal V$ extends uniquely to a symmetric monoidal functor from $\textbf{SMC}(T)$. For the embedding of $T \hookrightarrow \textbf{SMC}(T)$, there exists a unique monoidal functor $\hat F$ making the following diagram commute.

	\begin{center}
		\begin{tikzpicture}[-latex]
			\matrix (m) [cdiag] {
			       T          & \mathcal V \\
			  \textbf{SMC}(T) &            \\
			};
			\path [arrs] (m-1-1) edge node {$F$} (m-1-2)
			      (m-2-1) edge [dashed] node [swap] {$\hat F$} (m-1-2)
			      (m-1-1) edge [right hook-latex] (m-2-1);
		\end{tikzpicture}
	\end{center}
\end{theorem}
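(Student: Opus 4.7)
The plan is to construct $\hat F$ by induction on the structure of a cospan $\mathcal G : v \to w$ and then check that this assignment descends to equivalence classes under $\rewriteequiv_{\mathbb H}$. On objects, $\hat F$ is forced: a word $w = o_1 o_2 \cdots o_n$ in $O^*$ must be sent to the monoidal product $F o_1 \otimes \cdots \otimes F o_n$ in $\mathcal V$, which is well-defined since $\mathcal V$ is monoidal and $\textrm{ob}\,\mathcal V$ is a monoid (up to the chosen strictification). On arrows, I would first establish a normal-form/decomposition result: every directed acyclic open-graph $G$ (with specified input and output orderings by the cospan) can, via a topological sort, be written as a vertical composition of ``slices'', where each slice is a monoidal product of (i) identity wires, (ii) symmetries, and (iii) a single generator $a \in A$ together with identities on its sides. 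Given such a decomposition, define $\hat F$ by interpreting each vertex by $F(a)$, each identity wire as $1_{Fo}$, each symmetry as the corresponding $\sigma$ in $\mathcal V$, composing vertically by $\circ$, and composing horizontally by $\otimes$.

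The main obstacle will be well-definedness, which splits into two sub-problems. First, the decomposition into slices is highly non-unique: different topological sorts and different ways of ``shuffling'' independent generators past each other must give the same value in $\mathcal V$. This is handled by the coherence theorem for symmetric monoidal categories together with the interchange law $(f \otimes 1)\circ(1 \otimes g) = (1 \otimes g)\circ(f \otimes 1) = f \otimes g$, which ensures that any two topological orderings of the same DAG yield equal arrows of $\mathcal V$. Second, I must show invariance under $\rewriteequiv_{\mathbb H}$: each edge-homeomorphism rule $H_L, H_S^{n,m}, H_T^{n,m}$ inserts or removes an intermediate edge-point on a wire. Since the edge-point lies on a wire of some type $o \in O$ and contributes only a factor of $1_{F o}$ to the slice decomposition, contracting or expanding it is a no-op in $\mathcal V$. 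The circle rule $H_C$ does not apply in $\textbf{SMC}(T)$ because circles are cyclic and $\textbf{SMC}(T)$ restricts to directed acyclic middle graphs, so it is vacuous here. Invariance under the cospan-rewrite relation is then exactly the content of Thm~\ref{thm:cospan-rewrite-compatiblity} applied to $\mathbb H$.

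Once $\hat F$ is well-defined on equivalence classes, functoriality is immediate: composition of cospans is pushout, which in the slice decomposition corresponds to juxtaposing slices vertically, so $\hat F(\mathcal H \circ \mathcal G) = \hat F(\mathcal H) \circ \hat F(\mathcal G)$; the identity cospan on a word has the trivial slice decomposition with only identity wires, so it maps to the identity in $\mathcal V$. Monoidality follows because $\otimes$ in $\DCsp(\catOGraphTG)$ is defined by coproduct, which places two graphs side-by-side, and hence $\hat F(\mathcal G \otimes \mathcal H) = \hat F(\mathcal G) \otimes \hat F(\mathcal H)$; the symmetries $\sigma_{A,B}$ are built from the swap cospan and are sent to the symmetries of $\mathcal V$ by construction. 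The commuting triangle $\hat F \circ \iota = F$ holds on objects by definition and on generators $a \in A$ because the canonical embedding $\iota : T \hookrightarrow \textbf{SMC}(T)$ sends $a$ to the one-vertex cospan whose slice decomposition has a single generator.

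For uniqueness, suppose $G : T \to \mathcal V$ is another symmetric monoidal functor with $G \circ \iota = F$. Since every arrow of $\textbf{SMC}(T)$ is generated under $\circ$, $\otimes$, and symmetries by the images of $\iota$, identities, and symmetries, $G$ is determined on every such generator by the condition $G \iota = F$ together with monoidality and symmetry-preservation. Hence $G = \hat F$.
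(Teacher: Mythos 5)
Your proposal takes a genuinely different route from the paper. The paper does not construct $\hat F$ directly at all: it proves freeness by building an identity-on-objects geometric realisation functor $\llbracket - \rrbracket_T : \textbf{SMC}(T) \rightarrow \mathbb F_S(T)$ into Joyal and Street's category of progressive anchored graphs with valuations, shows that two cospans are $\rewriteequiv_{\mathbb H}$-equivalent iff their realisations are isomorphic anchored graphs (edge-points are exactly what the realisation forgets), and then invokes the theorem of \cite{Joyal:1991p1143} that $\mathbb F_S(T)$ is free. Your approach is the direct syntactic one: topologically sort the DAG into slices, interpret slice-by-slice in $\mathcal V$, and verify well-definedness by hand. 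What the paper's route buys is that the hard combinatorial content is outsourced entirely to Joyal--Street; what your route buys is a self-contained, algorithmic description of $\hat F$ that would be closer to an implementation.

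The place where your argument is thinner than it looks is the first well-definedness sub-problem. Independence of the slice decomposition is not a corollary of ``coherence plus interchange'' as usually stated: coherence for symmetric monoidal categories only says that diagrams of \emph{canonical} maps with matching underlying permutations commute, whereas here you must show that the anchored graph alone determines (i) the permutation inserted between consecutive slices up to the relations of the symmetric group, and (ii) that any two topological sorts are connected by moves each of which is justified by interchange \emph{or} by naturality of $\sigma$ (sliding a generator through a crossing), which you do not mention. Assembling these facts is essentially the content of the Joyal--Street freeness theorem itself, so as written your proof asserts rather than proves the step where all the mathematical work lives. Either cite that theorem explicitly for this step (at which point your construction becomes a reformulation of the paper's) or supply the combinatorial argument connecting topological sorts. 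The remaining parts of your proposal --- invariance under $\mathbb H$ (including the correct observation that $H_C$ is vacuous in the acyclic setting), functoriality, monoidality, the commuting triangle, and uniqueness --- are sound.
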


We can prove the above theorem using the geometric characterisation of symmetric monoidal categories given in \cite{Joyal:1991p1143}. The details of this proof are given in Appendix \ref{app:proof-of-free-smc}.

\begin{definition}[Trace Operator] \label{def:trace-functional} For
  objects $A$, $B$ and $C$ of $\textbf{TSMC}(T)$, a
  \emph{trace operator} is defined to be a function $tr_{A,C}^B(-) : \hom_{\textbf{TSMC}(T)}(A \otimes B, C
  \otimes B) \rightarrow \hom_{\textbf{TSMC}(T)}(A, C)$.  Intuitively, this introduces edges that connect from $B$ in the codomain to $B$ in the domain.

  First, define the graph $L_B$ as a \typegraph-graph with points $B + B$ and exactly
  one edge connecting each $b \in B$ to its copy; $L_B$ is simply a
  collection of edges. Then form a cospan
	\[ B \overset{o}{\rightarrow} L_B \overset{i}{\leftarrow} B \]
	selecting the inputs and outputs of $L_B$.
	
	Let $[\mathcal G] : A \otimes B \rightarrow C \otimes B$ be an arrow
  in $\textbf{TSMC}(T)$, represented by a cospan $\mathcal G$. We can
  write its arrows as induced arrows of the coproducts, for $c_1 : C
  \rightarrow G$, $c_2 : B \rightarrow G$, etc.
	\[ C + B \overset{[c_1, c_2]}{\longrightarrow} G
	         \overset{[d_1, d_2]}{\longleftarrow} A + B \]
	
	Perform the follow (boundary-coherent) pushout of $G$ and $L_B$, for $[d_2,c_2]$ the induced map from the coproduct $B + B$.
	\begin{center}
		\posquare{B + B}{G}{L_B}{G'}{[d_2,c_2]}{p_2}{[o,i]}{p_1}
	\end{center}
	
	$p_1$ is mono because $[o,i]$ is, so let the following be a new cospan $\mathcal G'$.
	\[ C \overset{p_1 c_1}{\longrightarrow} G
	         \overset{p_1 d_1}{\longleftarrow} A \]
	Define $tr_{A,C}^B([\mathcal G]) := [\mathcal G']$.
\end{definition}

\begin{example} An illustration of applying a trace operator. 
\begin{center} 
  \begin{tikzpicture}[mcgraph,node distance=0.2cm and 1cm]
{[node distance=0.5cm and 0.4cm]
	\matrix (m) [row sep=3em, column sep=5em,inner sep=0.5em] {
    { 
      \node (ki1) [ipoint] {};
      \node (ki1l) [ipointlabel, above=of ki1] {$B_1^i$};
      \node (ki2) [right=of ki1, ipoint] {};
      \node (ki2l) [ipointlabel, above=of ki2] {$B_2^i$};
      \node (ko1) [ipoint, below=of ki1] {};
      \node (ko1l) [ipointlabel, below=of ko1] {$B_1^o$};
      \node (ko2) [ipoint, below=of ki2] {};
      \node (ko2l) [ipointlabel, below=of ko2] {$B_2^o$};
    }
    & 
    {\node (Lp) {};
      \node (lg1) [andg, at=(Lp)] {$\quad$};
      \node (li1) [ipoint,above=of lg1.north west] {};
      \node (li1l) [ipointlabel, above left=of li1] {$A$};
      \node (li2) [above=of lg1.north, ipoint] {};
      \node (li2l) [ipointlabel, above=of li2] {$B_1^i$};
      \node (li3) [above=of lg1.north east, ipoint] {};
      \node (li3l) [ipointlabel, above right=of li3] {$B_2^i$};
      \node (lo1) [ipoint, below=of lg1.south west] {};
      \node (lo1l) [ipointlabel, below left=of lo1] {$C$};
      \node (lo2) [ipoint, below=of lg1.south] {};
      \node (lo2l) [ipointlabel, below=of lo2] {$B_1^o$};
      \node (lo3) [ipoint, below=of lg1.south east] {};
      \node (lo3l) [ipointlabel, below right=of lo3] {$B_1^o$};
      \path[dline] 
      (li1) edge (lg1)
      (li2) edge (lg1)
      (li3) edge (lg1)
      (lg1) edge (lo1)
      (lg1) edge (lo2)
      (lg1) edge (lo3);
    }
    \\
    { \node (ri1) [ipoint] {};
      \node (ri1l) [ipointlabel, above=of ri1] {$B_1^i$};
      \node (ri2) [right=of ri1, ipoint] {};
      \node (ri2l) [ipointlabel, above=of ri2] {$B_2^i$};
      \node (ro1) [ipoint, below=of ri1] {};
      \node (ro1l) [ipointlabel, below=of ro1] {$B_1^o$};
      \node (ro2) [ipoint, below=of ri2] {};
      \node (ro2l) [ipointlabel, below=of ro2] {$B_2^o$};
      \path[dline] 
      (ro1) edge (ri1)
      (ro2) edge (ri2);
    }
    & 
    {{ \node (mg1) [andg] {$\quad$};
      \node (mi1) [above=of mg1.north west, ipoint] {};
      \node (mi1l) [ipointlabel, above left=of mi1] {$A$};
      \node (mi2) [above=of mg1.north, ipoint] {};
      \node (mi2l) [ipointlabel, above=of mi2] {$B_1^i$};
      \node (mi3) [above=of mg1.east, ipoint] {};
      \node (mi3l) [ipointlabel, above right=of mi3] {$B_2^i$};
      \node (mo1) [ipoint, below=of mg1.south west] {};
      \node (mo1l) [ipointlabel, below left=of mo1] {$C$};
      \node (mo2) [ipoint, below=of mg1.south] {};
      \node (mo2l) [ipointlabel, below=of mo2] {$B_1^o$};
      \node (mo3) [ipoint, below=of mg1.east] {};
      \node (mo3l) [ipointlabel, below right=of mo3] {$B_2^o$};
      \node (x) [right=of mg1] {};

      \path[dline] 
      (mi1) edge (mg1)
      (mi2) edge (mg1)
      (mi3) edge (mg1)
      (mg1) edge (mo1)
      (mg1) edge (mo2)
      (mg1) edge (mo3);

      \draw[dline] (mo2) to [in=20,out=-20,looseness=2.3] (mi2);
      \draw[dline] (mo3) to [in=20,out=-20,looseness=1] (mi3);


    } 

    { \node (y) [node distance=0.5cm and 1cm,right=of x] {};
      \node (m2g1) [andg, right=of y] {$\quad$};
      \node (m2i1) [above=of m2g1.north west, ipoint] {};
      \node (m2i2) [above=of m2g1.north, halfe] {};
      \node (m2i3) [above=of m2g1.south east, halfe] {};
      \node (m2o1) [ipoint, below=of m2g1.south west] {};
      \node (m2o2) [halfe, below=of m2g1.south] {};
      \node (m2o3) [halfe, below=of m2g1.north east] {};
      \node (m2x) [ipoint,right=of m2g1] {};
      \node (m2xa) [node distance=0.5cm and 0.1cm, ipoint,right=of m2g1,xshift=1mm] {};

	\draw[dline] (m2g1) to [out=-90,in=-90,looseness=2.5] (m2x);
    \draw[dline] (m2x) to [in=90,out=90,looseness=2.5] (m2g1);
    \draw[dline] (m2g1) to [out=-80,in=-90,looseness=2.5] (m2xa);
    \draw[dline] (m2xa) to [in=80,out=90,looseness=2.5] (m2g1);

      
      \path[dline] 
      (m2i1) edge (m2g1)
      (m2g1) edge (m2o1);

    }

    }  \\
  };
}
  \begin{pgfonlayer}{background}
        \node[rectangle, fill=black!5, draw=black!30,rounded corners=1ex,
        fit=(ki1) (ki2) (ko1) (ko2) (ki1l) (ki2l) (ko1l) (ko2l)] (K) {};  
  \node[rectangle, fill=black!5, draw=black!30,rounded corners=1ex,
  fit=(lg1) (li1) (li2) (li3) (lo1) (lo2) (lo3) (li1l) (li2l) (li3l) (lo1l) (lo2l) (lo3l)] (L) {};  
  \node[rectangle, fill=black!5, draw=black!30,rounded corners=1ex, fit=(ri1) (ri2) (ro1) (ro2) (ri1l) (ri2l) (ro1l) (ro2l)] (R) {};  
  \node[rectangle, fill=black!5, draw=black!30,rounded corners=1ex,
  fit=(mg1) (mi1) (mi2) (mi3) (mo1) (mo2) (mo3) (x) (mi1l) (mi2l) (mi3l) (mo1l) (mo2l) (mo3l)] (M) {};  
  \end{pgfonlayer}
  \begin{pgfonlayer}{background}
  \node[rectangle, fill=black!5, draw=black!30,rounded corners=1ex,
  fit=(m2g1) (m2i1) (m2i2) (m2i3) (m2o1) (m2o2) (m2o3) (m2x)] (M2) {};  
  \end{pgfonlayer}

  \node [at=(y)] () {$\rewritetrans_{\mathbb{H}}\ \ \ $};

  \path[->,shorten <= 3pt,shorten >= 3pt,arrs] 
  (K) edge node[above] {} (R)
  (K) edge node[above] {} ($(K) + (2.2cm,0)$)
  (R) edge node[above] {} ($(R) + (2.2cm,0)$)
  ($(L.south) - (0,0)$) edge node[above] {} ($(L.south) - (0,0.7cm)$);

		\NWbracket{($(M.north east) + (-2.3cm,-0.7cm)$)};

  \end{tikzpicture}
\end{center}
\end{example}

This provides a natural way to work with traced symmetric monoidal
categories, and subsequently compact closed categories. We conjecture that this is, in fact, a free construction of traced symmetric monoidal categories.

\begin{conjecture}\label{thm:free-traced}
	The trace functional as in \ref{def:trace-functional} gives
  $\textbf{TSMC}(T)$ the structure of the free traced symmetric
  monoidal category over $T$.
\end{conjecture}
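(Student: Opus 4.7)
The plan is to mirror the proof of Theorem~\ref{thm:free-smc} for $\textbf{SMC}(T)$, but handle the feedback loops that $\textbf{TSMC}(T)$ allows by systematically exploiting the trace. First I would establish that the trace operator $tr^B_{A,C}$ from Def~\ref{def:trace-functional} is well-defined on equivalence classes, i.e.\ invariant under $\rewriteequiv_{\mathbb{H}}$ and independent of the choice of representing cospan. The key observation is that the pushout of $G$ with $L_B$ along $[d_2,c_2]$ and $[o,i]$ is boundary-coherent by construction (inputs get glued to outputs), so Thm~\ref{thm:colimits} applies and the resulting cospan is canonical; compatibility with edge-homeomorphism rewrites of $G$ follows from Thm~\ref{thm:plugging-rewrite-compat} since inserting or removing intermediate edge-points on a wire glued by $L_B$ commutes with the gluing pushout.

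Next I would verify the trace axioms (naturality in $A$ and $C$, dinaturality in $B$, the two vanishing equations $tr^I = \mathrm{id}$ and $tr^{B \otimes C} = tr^B \circ tr^C$, superposing, and yanking $tr^B(\sigma_{B,B}) = \mathrm{id}_B$). Each of these reduces to an equality of cospans up to $\rewriteequiv_{\mathbb{H}}$, which I would verify directly by constructing explicit isomorphisms of \typegraph-graphs: for yanking, the traced swap produces a single wire that contracts via $H_L$ to an identity wire; for vanishing over a tensor, the two successive pushouts with $L_B$ and $L_C$ compose to a single pushout with $L_{B\otimes C}$ by associativity of pushouts; dinaturality follows because the pushout construction does not care which side of a shared boundary point we decompose.

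For the universal property, given any traced symmetric monoidal category $\mathcal{V}$ and a monoidal prefunctor $F : T \to \mathcal{V}$, I would extend $F$ to $\hat F : \textbf{TSMC}(T) \to \mathcal{V}$ as follows. Every cospan $\mathcal{G}$ in $\textbf{TSMC}(T)$ can be written, up to $\mathbb{H}$, as a trace of an acyclic cospan: pick a spanning set of back-edges whose removal makes $G$ acyclic, cut each one by introducing a pair of boundary points, and recover $\mathcal{G}$ by applying the trace over these new boundary objects. The resulting acyclic cospan lies in $\textbf{SMC}(T)$ (after suitable tensoring with identities on the traced objects), so Thm~\ref{thm:free-smc} assigns it a morphism in $\mathcal{V}$; define $\hat F(\mathcal{G})$ to be the trace in $\mathcal{V}$ of this morphism. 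Uniqueness of $\hat F$ as a traced SMC functor forces exactly this definition once one commits to extending $F$ and preserving the trace.

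The main obstacle will be showing that this extension is well-defined, i.e.\ independent of the choice of back-edges used to acyclify the graph. Two different acyclifications of the same $G$ will produce two different acyclic cospans whose traces should be equal in $\mathcal{V}$. This will require translating between them using the trace axioms (primarily dinaturality and vanishing), which is essentially the content of the coherence theorem for traced symmetric monoidal categories. I would argue this by induction on the number of back-edges that differ, reducing each local change of acyclification to a single application of dinaturality or a sliding move, analogously to the use of progressive polarised diagrams in \cite{Joyal:1991p1143}. Once well-definedness is established, functoriality, monoidality, and trace-preservation of $\hat F$ follow routinely from the corresponding properties established in the first two paragraphs together with Thm~\ref{thm:cospan-rewrite-compatiblity}.
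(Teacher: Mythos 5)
First, note that the paper does not prove this statement: it is stated, and deliberately left, as a conjecture (see the discussion in \S\ref{sec:conclusions}), so there is no proof of record to compare your proposal against. Your first two paragraphs are sound in outline: well-definedness of $tr^B_{A,C}$ on $\rewriteequiv_{\mathbb H}$-classes and the verification of the individual trace axioms reduce to explicit manipulations of cospans using machinery already established in the paper (Thm~\ref{thm:colimits}, Thm~\ref{thm:plugging-rewrite-compat}, Thm~\ref{thm:cospan-rewrite-compatiblity}), and each axiom is a finite graphical check of the kind the paper carries out elsewhere.

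The genuine gap is in your third and fourth paragraphs, and you have correctly located it but not closed it. Defining $\hat F(\mathcal G)$ by choosing a set of back-edges, cutting them to obtain an acyclic cospan in $\textbf{SMC}(T)$, applying Thm~\ref{thm:free-smc}, and then tracing in $\mathcal V$, requires showing the result is independent of the chosen acyclification. You propose to do this ``by induction on the number of back-edges that differ'' using dinaturality and sliding, remarking that this ``is essentially the content of the coherence theorem for traced symmetric monoidal categories.'' That is exactly the problem: the coherence/completeness theorem for the traced graphical language cannot be assumed here, because the conjecture being proved \emph{is} (up to the translation between topological diagrams and open-graphs) that coherence statement. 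Two acyclifications of the same graph need not be connected by moves that each reduce to a single application of dinaturality; establishing that they always are is the entire combinatorial content of the theorem, and no argument is given. The standard escape route --- embedding the free traced category into a compact closed category via the Int construction and invoking Kelly--Laplaza-style coherence there --- is not mentioned, and would in any case still require a geometric realisation equivalence for the traced, non-progressive case analogous to Appendix~\ref{app:proof-of-free-smc}; that appendix argument explicitly relies on progressiveness (via \cite{Joyal:1991p1143}) and does not extend verbatim to graphs with directed cycles and circles. As it stands your proposal is a reasonable programme rather than a proof, which is consistent with the authors' decision to leave the statement as a conjecture.
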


\subsection{PROPs}
\label{sub:props}

PROPs, or PROduct categories with Permutations, are a convenient way of describing symmetric monoidal algebraic structures \emph{internal} to some monoidal category $\mathcal V$.

\begin{definition}\label{def:prop}
	A PROP is a symmetric monoidal category whose objects are the natural numbers where the tensor product is given by addition.
\end{definition}

Examples of PROPs are the (skeletal) category $\mathbb F$ of finite sets and functions, $\textbf{Csp}(\mathbb F)$ of cospans of finite sets, with composition as pushout, $\textbf{Mat}(\mathbb N)$ whose objects are natural numbers $m$, $n$ and whose arrows are $m \times n$ matrices of natural numbers and $\textbf{Mat}(\mathbb Z)$ the same for integers.

PROPs are interesting because they define categories of algebras.

\begin{definition}\label{def:P-alg}
	For a PROP $\mathbb P$ and some fixed symmetric monoidal category $\mathcal V$, the category $\mathbb P$-Alg of $\mathbb P$-algebras has as objects strict symmetric monoidal functors $\mathbb P \rightarrow \mathcal V$ and has as arrows monoidal natural transformations.
\end{definition}

As their name suggests, algebras of PROPs represent internal algebraic structures. For instance, the algebras of $\mathbb F$, $\textbf{Csp}(\mathbb F)$, $\textbf{Mat}(\mathbb N)$, and $\textbf{Mat}(\mathbb Z)$ in $\mathcal V$ are internal monoids, special Frobenius algebras, bialgebras, and Hopf algebras respectively.

PROPs can be combined with each other in much the same way as monads using \emph{distributive laws} \cite{Lack:2004p1160}, and even more flexible \emph{interaction theories}, like the one used in \cite{CoeckeDuncan2009}.

A rich class of PROPs can be obtained from the rewrite categories defined in \S\ref{sub:rewrite-categories}. Consider a typed graph category made from a ``single-sorted'' graphical signature,
\[T : A \rightarrow \{ \bullet \}^* \times \{ \bullet \}^*.\]
Then, the objects of $\DCsp(\catOGraphTG)$ are point graphs containing $n$ isolated points of type ``$\bullet$'', which we can represent by the natural numbers. Since the monoidal product on objects is the disjoint union, $m \otimes n = m + n$.

Let $\mathbb E$ be some graphical theory, expressed as a rewrite system. Then, for $\mathbb H$ the edge-homeomorphism rewrite system, we can form the combined system $\mathbb E + \mathbb H$, and the rewrite category
\[ \mathcal E := \rewriteCat{(\mathbb E + \mathbb H)}. \]

The algebras of $\mathcal E$ will be structures in $\mathcal V$ that satisfy precisely the identities given graphically by $E$. By expanding the graphical signature $T$, this procedure generalises naturally from PROPs to multi-sorted monoidal theories. Taking $\mathcal V$ to be some concrete category like $\catVect_{\mathbb{C}}$, this formalises the notion of \emph{concrete models} for some graphical theory.

\section{Conclusions and Further Work}
\label{sec:conclusions}

We have presented a theory of open-graphs to support graphical reasoning about computational processes.  These graphs are visualised with an interface made of half-edges that enter or leave the graph. We formalised this by introducing a notion of intermediate points that occur along an edge or ``wire''. This allows a single wire to be cut into arbitrarily many smaller wires, and conversely supports composition by plugging wires together. Methods to support graphical rewriting, using the so called double pushout approach, have also been described, and it has been shown how graphical rewriting rules can themselves be composed. We then formalised the relationship between graphs that are ``semantically'' the same by defining a graph rewrite system called \emph{edge-homeomorphism}, by analogy to homeomorphism in topological spaces. 

Next, we generalised our construction of open-graphs to work with many \emph{types} of vertices and wires. This makes parameterises open-graphs by a  graphical signature with provides the typing rules for how graphs can be composed. This lets us express many kinds of processes, notably those with distinguished inputs and outputs. Building on graphical signatures, we then showed that cospans over typed open-graphs, modulo edge-homeomorphism, form free symmetric monoidal categories over a set of generators. By taking richer rewrite systems, we can obtain a large and interesting class of monoidal theory categories, including PROPs. Therefore, we have a fully general method of reasoning about models of graphical theories. 

The constructions presented here have deliberately been kept finitary and decidable for the case of finite open-graphs. This is with an eye to implementation of graphical reasoning software which would form a conceptual bridge to let us enjoy the intuitive power of graphical languages, while benefiting from rigorous, computer-assisted manipulation. In particular, our theory provides a platform for bringing techniques from rewriting, such as critical pair analysis and Knuth-Bendix completion \cite{knuth-bendix}, to process-centric graphical languages and monoidal categories. An implementation of this work is already largely completed\footnote{\url{http://dream.inf.ed.ac.uk/projects/quantomatic}.}, although a proof that this does indeed implement the theory presented here is future work. Another area of further work is to extend this formalism to support pattern-graphs, as introduced in~\cite{2009:DixonDuncan:AMAI}. More generally, we would like to be able to reason with graphs and rules that contain repeated or recursive structure.


Our construction of PROPs and free symmetric monoidal categories is also only the beginning. The graphical notation for traced symmetric monoidal categories introduced in e.g.~\cite{selinger2009survey} gives us strong reason to believe that Conjecture~\ref{thm:free-traced} is correct. For a suitable notion of traced monoidal theories, generalising the definition of PROPs to the traced setting, we believe that the construction in \S\ref{sub:props} actually forms the \emph{free} traced monoidal theory satisfying the equations reflected by a rewrite system.

On a more fundamental level, the construction of edge-points and edge homeomorphism suggests a deep and telling connection to not only topological graphs, but their more exotic cousins, topological \emph{directed graphs}. This has heretofore only been explored in an ad hoc manner, but we believe it can be made fully formal using the notions of directed topological spaces, as presented by \cite{grandis2009directed} or \cite{Krishnan2009}. We feel that, in the context of such a presentation, the technical content of this paper will arise naturally as a discreet reflection of the deeper, topological theory.

\bibliographystyle{apalike}
\bibliography{bibfile}

\newpage

\appendix

\section{Proof of Freeness for $\textbf{SMC}(T)$}
\label{app:proof-of-free-smc}

We shall prove Thm \ref{thm:free-smc} using the geometric characterisation of symmetric monoidal categories given in \cite{Joyal:1991p1143}.

\begin{theorem}[\ref{thm:free-smc}]
	$\textbf{SMC}(T)$ is the free symmetric monoidal category of $T$. That is, for any symmetric monoidal category $\mathcal V$, any monoidal prefunctor $F : T \rightarrow \mathcal V$ extends uniquely to a symmetric monoidal functor from $\textbf{SMC}(T)$. For the embedding of $T \hookrightarrow \textbf{SMC}(T)$, there exists a unique monoidal functor $\hat F$ making the following diagram commute.

	\begin{center}
		\begin{tikzpicture}[-latex]
			\matrix (m) [cdiag] {
			       T          & \mathcal V \\
			  \textbf{SMC}(T) &            \\
			};
			\path [arrs] (m-1-1) edge node {$F$} (m-1-2)
			      (m-2-1) edge [dashed] node [swap] {$\hat F$} (m-1-2)
			      (m-1-1) edge [right hook-latex] (m-2-1);
		\end{tikzpicture}
	\end{center}
\end{theorem}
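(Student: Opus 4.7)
The plan is to construct $\hat F$ explicitly by appealing to the geometric characterisation of free symmetric monoidal categories from \cite{Joyal:1991p1143}: morphisms in the free symmetric monoidal category on a graphical signature correspond bijectively to equivalence classes of ``progressive'' (i.e.\ directed acyclic) anchored plane graphs whose vertices are labelled by generators and whose edges are labelled by object-types, modulo planar isotopy. The key observation is that the objects of $\textbf{SMC}(T)$ are precisely words in $O^*$ and its morphisms, after contracting representatives modulo edge-homeomorphism $\mathbb H$, are exactly such DAGs presented combinatorially. So the strategy is: define $\hat F$ on objects and morphisms via Joyal--Street, then check it is well-defined, (symmetric monoidal) functorial, and unique.

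On objects, define $\hat F(o_1 o_2 \cdots o_n) := F(o_1) \otimes F(o_2) \otimes \cdots \otimes F(o_n)$ (choosing, say, left-associated bracketing in $\mathcal V$; coherence makes the choice irrelevant). For a morphism $[\mathcal G] : X \to Y$ represented by a cospan $Y \overset{c}{\to} G \overset{d}{\leftarrow} X$ with $G$ a DAG, first contract $G$ along $\mathbb H$ to eliminate internal edge-points, obtaining a representative $\tilde G$ whose vertices are each labelled by some $a \in A$ with the induced types on adjacent half-edges matching $T(a)$. Pick a topological linear order of the vertices of $\tilde G$ respecting the directed structure; the order, together with the ordering of $X$ and $Y$, determines a composite in $\mathcal V$ built from the morphisms $F(a)$, symmetries in $\mathcal V$ (to shuffle wires into position), and identities. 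Set $\hat F([\mathcal G])$ equal to this composite.

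The main obstacle is well-definedness, with three sources of ambiguity: (i) the representative cospan modulo $\mathbb H$, (ii) the topological sort chosen, and (iii) the object-bracketings implicit above. For (i), each of the four rules $H_L, H_C, H^{n,m}_S, H^{n,m}_T$ inserts or deletes an edge-point in the middle of a wire; on the $\mathcal V$-side this corresponds to inserting or deleting an identity morphism of the appropriate type, which is trivial. For (iii), Mac Lane's coherence theorem makes any two bracketing choices canonically equal. The content of (ii) is exactly the Joyal--Street theorem: any two topological sorts of the same DAG differ by a sequence of adjacent transpositions of pairs of vertices with no path between them in $\tilde G$, and naturality of the symmetric braiding in $\mathcal V$ together with the fact that tensor-independent morphisms commute past a symmetry ensures equal composites. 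I would therefore invoke \cite{Joyal:1991p1143} for this step rather than reprove it; our construction of $\textbf{SMC}(T)$ via cospans of DAGs modulo $\mathbb H$ is designed precisely to realise their combinatorial data.

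Once well-definedness is established, the remaining verifications are routine. Functoriality follows because composition in $\textbf{SMC}(T)$ is pushout of cospans along their shared boundary (Thm \ref{thm:cospan-rewrite-compatiblity}), which on contracted representatives is literally concatenation of the two DAGs along matched outputs and inputs; a topological sort of the composite may be taken to be the sort of the first followed by the sort of the second, so the resulting morphism in $\mathcal V$ is the composite of the two. The monoidal product on $\textbf{SMC}(T)$ is coproduct of cospans, which corresponds to side-by-side juxtaposition of diagrams and hence to $\otimes$ in $\mathcal V$; the symmetries $\sigma_{A,B}$ are sent by construction to the braidings of $\mathcal V$. Finally, uniqueness is immediate: the generators $a \in A$, together with symmetries, identities, $\otimes$ and $\circ$, generate $\textbf{SMC}(T)$ (every DAG decomposes into these primitives, up to $\mathbb H$), and any symmetric monoidal functor extending $F$ is forced on each primitive, hence on all of $\textbf{SMC}(T)$.
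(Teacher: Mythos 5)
Your proposal is correct in substance, but it takes a different route from the paper's proof in Appendix \ref{app:proof-of-free-smc}. The paper does not construct $\hat F$ directly: it defines an identity-on-objects geometric realisation functor $\llbracket - \rrbracket_T : \textbf{SMC}(T) \rightarrow \mathbb F_S(T)$ into Joyal and Street's category of progressive anchored graphs with valuations, checks that this assignment is invariant under edge-homeomorphism (since realisation forgets internal edge-points) and admits a weak inverse, and then transfers freeness along the resulting symmetric monoidal equivalence. You instead build the universal extension $\hat F : \textbf{SMC}(T) \rightarrow \mathcal V$ by hand, via the $\mathbb H$-normal form and a topological sort, and invoke Joyal--Street only to dispose of the dependence on the sort. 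The two arguments lean on the same external theorem, but yours must additionally verify functoriality, monoidality and uniqueness of $\hat F$ explicitly, whereas the paper gets all of that for free from the equivalence; what your version buys is a concrete recipe for computing $\hat F$ on a given cospan, which the paper's proof leaves implicit. Two small points to tighten: (i) your composite ``built from the morphisms $F(a)$, symmetries, and identities'' is only well-determined once you fix the total order on the in- and out-edges of each vertex, which in this framework comes from the local isomorphism to $\typegraph$ and the word order in $T(a)$ --- this matters when a generator has repeated input or output types; and (ii) the circle rule $H_C$ never fires in $\textbf{SMC}(T)$, since its graphs are directed acyclic, so your case (i) of well-definedness only needs $H_L$, $H_S^{n,m}$ and $H_T^{n,m}$.
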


First, we recall several definitions from \cite{Joyal:1991p1143}.

\begin{definition}[Generalised Topological Graph] \label{def:genalised-top-graph}
	A \emph{generalised topological graph} is a pair $(G, G_0)$, where $G$ is a Hausdorff space and $G_0$ is a discreet, closed subset where $G - G_0$ is isomorphic to a sum of open intervals $I_o := (0,1)\subseteq \mathbb R$ and copies of $S_1$. The compactification of an open interval $I_o \subseteq G - G_0$ is called an \emph{edge} $\hat e$. A copy of $S_1 \subseteq G - G_0$ is called a \emph{circle} $\hat c$.
\end{definition}

Note that all edges naturally embed in the compactification $\hat G \supseteq G$ obtained by adding endpoints to open edges.

\begin{definition}[Polarised Graph] \label{def:polarised-graph}
	A \emph{polarised graph} is a tuple $\Gamma := (G, G_0, \omega, \pi)$, where $\omega$ assigns each each $\hat e$ and each circle $\hat c$ in $(G, G_0)$ an orientation. We can therefore define an input $\hat e(0)$ and an output $\hat e(1)$ for each edge. For each vertex $v \in G_0$, $\textrm{in}(v)$ is the set of edges such that $\hat e(1) = v$ and $\textrm{out}(v)$ is the set of edges such that $\hat e(0) = v$. $\pi$ then assigns to each $v$ a total order on $\textrm{in}(v)$ and $\textrm{out}(v)$, called a \emph{polarisation}. Also, a polarised graph that contains no directed cycles is called \emph{progressive}.
\end{definition}

Polarised graphs come with a notion of boundary. We can furthermore put an ordering on this boundary.

\begin{definition}[Boundary of a polarised graph] \label{def:anchored-graph}
	For a polarised graph $\Gamma := (G, G_0, \omega, \pi)$, $\partial \Gamma := \hat G - G$ is a discreet space called the \emph{boundary} of $\Gamma$. Points in $\partial \Gamma$ that are the input of some edge are called \emph{inputs} of $\Gamma$, and outputs of edges in $\partial \Gamma$ are called \emph{outputs} of $\Gamma$. A polarised graph with a pair of total orders $\beta_0$ on its inputs and $\beta_1$ on its outputs is called an \emph{anchored graph}.
\end{definition}

\begin{definition}[Valuation] \label{def:valuation}
	For an anchored graph $\Gamma$ and a monoidal precategory $\mathcal M$, a \emph{valuation} $v$ of $\Gamma$ is a function $v_0$ that assigns an object of $\mathcal M$ to every edge in $\Gamma$ and a function $v_1$ that assigns an arrow to every vertex in such a way that respects the domain on codomain of arrows in $\mathcal M$. A map of anchored graphs with valuations $(\Gamma, v) \rightarrow (\Gamma', v')$ is a collection of maps that respect all of the structure of $\Gamma$ and the valuations.
\end{definition}

Since an anchored graph gives a total order to inputs and outputs, we can associate input and output words to a pair $(\Gamma, v)$. Let $T : A \rightarrow O^* \times O^*$ be a graphical signature. $\mathbb F_S(T)$ is the category whose objects words in $O^*$. For words $v$ and $w$, arrows are isomorphism classes of progressive anchored graphs with valuations into $T$ that have input word $v$ and output word $w$.

It was shown in \cite{Joyal:1991p1143} that $\mathbb F_S(T)$ is the free symmetric monoidal category over $T$. For the proof of theorem \ref{thm:free-smc} it suffices to show that a symmetric monoidal equivalence exists from $\textbf{SMC}(T)$ to $\mathbb F_S(T)$.

We can now prove Thm \ref{thm:free-smc} by defining a geometric realisation functor $\llbracket - \rrbracket_T : \textbf{SMC}(T) \rightarrow \mathbb F_S(T)$ that is identity-on-objects and showing it admits a (weak) inverse.

\begin{proof}
Let $\mathcal G : X \rightarrow Y$ be an arrow in $\textbf{SMC}(T)$. Choose a directed cospan \( Y \overset{c}{\longrightarrow} G \overset{d}{\longleftarrow} X \) of \typegraph-graphs to represent the equivalence class $\mathcal G$.

The category \catGraph{} sits inside the category of simplicial complexes, so there is a geometric realisation functor $\llbracket - \rrbracket : \catGraph \rightarrow \catTop$.

$G$ is an element of the slice category over \typegraph, so it comes with a map $\tau_G : G \rightarrow \typegraph$. The underlying graph of $G$ has an embedding of its boundary and its set of vertices. That is, there exist maps $b : X + Y \rightarrow G$ and $v : V \rightarrow G$ in \catGraph, where $X + Y$ and $V$ are discreet graphs.

For $H := \llbracket G \rrbracket - \llbracket X + Y \rrbracket$ and $H_0 := \llbracket V \rrbracket$, $(H, H_0)$ defines a generalised topological graph. Note that the compactification $\hat H = \llbracket G \rrbracket$. Since each edge (or circle) in $\hat H$ has an underlying directed chain (or cycle) of edge points, we can equip it with an orientation $\omega$. Recall that edges adjacent to a vertex in \typegraph{} have a natural total order given by their word order in $T$. We can use this order to assign a polarisation $\pi$ to the vertices in $H_0$. Thus $(H, H_0, \omega, \pi)$ defines a polarised graph. It is progressive precisely because $G$ is directed-acyclic. The total order on $X$ and $Y$ induce a total order on the inputs and outputs of $G$, and hence total orders $\beta_i, \beta_o$ on the inputs and outputs of the polarised graph. Thus $\Gamma = (H, H_0, \omega, \pi, \beta)$ is a progressive anchored graph. For $\Gamma$, a valuation $v$ into $T$ can clearly be deduced by the typing map $\tau_G : G \rightarrow \typegraph$, so $(\Gamma, v)$ is an arrow in $\mathbb F_S(T)$.

Let $\Gamma'$ be the result of performing this construction on some other $G'$ representing $\mathcal G$. Then $G$ could be rewritten to $G'$ by only merging or subdividing edges. The only step of the construction that makes explicit reference to (internal) edge-points is the application of $\llbracket - \rrbracket : \catGraph \rightarrow \catTop$ to the underlying graphs of $G$ and $G'$. This process forgets edge points, so $\Gamma' \cong \Gamma$. Also, for any $G'$ that yields a progressive anchored graph $\Gamma' \cong \Gamma$, $G'$ is simply another triangularisation of $\Gamma$, so $G'$ rewrites to $G$ using edge-homeomorphism.

This construction respects composition and the symmetric monoidal structure, so $\llbracket \mathcal G \rrbracket_T = \Gamma$ defines a symmetric monoidal functor into $\mathbb F_S(T)$. Furthermore, $\llbracket - \rrbracket_T$ admits a weak inverse by sending a progressive anchored graph $\Gamma$ to the equivalence class $\mathcal G$ represented by \emph{any} $G$ such that the above construction performed on $G$ yields a progressive anchored graph $\Gamma' \cong \Gamma$.
\end{proof}

\end{document}